\documentclass[a4paper,11pt]{amsart}
\pdfoutput=1
\usepackage[utf8]{inputenc}

\usepackage[english]{babel}
\usepackage{csquotes}

\sloppy 

\usepackage{amsmath}
\usepackage{amsfonts}
\usepackage{amssymb}
\usepackage{amsthm}
\usepackage{mathrsfs}
\usepackage{stmaryrd}
\usepackage[all]{xy}
\usepackage{multirow}

\usepackage[usenames,dvipsnames,svgnames,table]{xcolor}

\makeatletter
\renewcommand*{\@textcolor}[3]{
  \protect\leavevmode
  \begingroup
    \color#1{#2}#3
  \endgroup
}
\makeatother

\usepackage[colorlinks=false,bookmarks=true]{hyperref}
\hypersetup{linkbordercolor=cyan, linkcolor=cyan}
\hypersetup{citebordercolor=cyan, citecolor=cyan}

\newtheorem{thm}{Theorem}[section]

\newtheorem{lem}[thm]{Lemma}
\newtheorem{pps}[thm]{Proposition}

\newtheorem{thml}{Theorem}

\newtheorem{corl}[thml]{Corollary}

\newtheorem*{thm*}{Theorem}
\newtheorem*{cor*}{Corollary}
\newtheorem*{lem*}{Lemma}
\newtheorem*{pps*}{Proposition}

\theoremstyle{definition}
\newtheorem{dfn}[thm]{Definition}
\newtheorem{qst}[thm]{Question}
\newtheorem{pbl}[thm]{Problem}

\newtheorem{obs}[thm]{\emph{Remark}}
\newtheorem{exm}[thm]{\emph{Example}}

\newtheorem*{def*}{Definition}
\newtheorem*{def/thm*}{Definition{/}Theorem}

\newcommand{\N}{\mathbb{N}}
\newcommand{\Z}{\mathbb{Z}}
\newcommand{\Zum}{\mathbb{Z}_{\geq 1}}
\newcommand{\Nzero}{\mathbb{Z}_{\geq 0}}

\newcommand{\F}{\mathbb{F}}
\newcommand{\R}{\mathbb{R}}
\newcommand{\K}{\mathbb{K}}
\newcommand{\C}{\mathbb{C}}
\newcommand{\Mult}{\mathbb{G}_m}
\newcommand{\Addi}{\mathbb{G}_a}
\newcommand{\uCD}{\mathcal{G}_\Phi^{{\rm sc}}}
\newcommand{\ueCD}{E_\Phi^{{\rm sc}}}
\newcommand{\Lie}[1]{\mathfrak{#1}}

\newcommand{\laurent}[1]{#1[t,t^{-1}]}
\renewcommand{\H}{\mathcal{H}}
\renewcommand{\P}{\mathcal{P}}
\renewcommand{\L}{\mathcal{L}}
\newcommand{\LE}{\mathcal{LE}}
\newcommand{\U}{\mathcal{U}}
\newcommand{\mcB}{\mathcal{B}}
\newcommand{\B}{\mathcal{B}}
\newcommand{\Bzero}{\mathcal{B}_2^0(R)}
\newcommand{\G}{\mathcal{G}}
\newcommand{\OS}{\mathcal{O}_S}

\newcommand{\phee}{\varphi}
\newcommand{\veps}{\varepsilon}

\newcommand{\cupdot}{\mathbin{\mathaccent\cdot\cup}}
\newcommand{\tq}{~|~}
\newcommand{\tqalt}{~:~}

\newcommand{\til}[1]{\widetilde{#1}}
\newcommand{\set}[1]{\{#1\}}
\newcommand{\Eij}[1]{{\rm E}_{#1}}
\newcommand{\eij}[1]{e_{#1}}

\DeclareMathOperator{\Hom}{Hom}

\DeclareMathOperator{\rk}{rk}

\DeclareMathOperator{\GL}{GL}
\DeclareMathOperator{\SL}{SL}
\DeclareMathOperator{\Aut}{Aut}
\DeclareMathOperator{\ad}{ad}
\DeclareMathOperator{\vspan}{span}
\DeclareMathOperator{\Obj}{Obj}
\DeclareMathOperator{\carac}{char}
\DeclareMathOperator{\Adj}{Adj}
\DeclareMathOperator{\nonAdj}{NAdj}
\DeclareMathOperator{\Ext}{Ext}

\newcommand{\mbf}{\mathbf}
\newcommand{\mc}{\mathcal}
\newcommand{\mf}{\mathfrak}
\newcommand{\gera}[1]{\langle {#1} \rangle}
\newcommand{\vazio}{\varnothing}
\newcommand{\nsgp}{\trianglelefteq}

\newcommand{\NVB}{{\bf NVB}}
\newcommand{\NVBff}{{\bf NVB} for $\Phi$}
\newcommand{\QG}{{\bf QG}}
\newcommand{\QGff}{{\bf QG} for $\Phi$}

\newcommand{\bref}[1]{{\bf (\ref{#1})}}

\newcommand{\into}{\hookrightarrow}
\newcommand{\onto}{\twoheadrightarrow}

\usepackage[style=alphabetic,sorting=nyt,maxbibnames=7,maxcitenames=3,block=space,backend=bibtex]{biblatex}
\addbibresource{projectbiblio.bib}

\def\customdate{\empty}
\renewcommand{\date}[1]{\def\customdate{#1}}

\title[Presentations of parabolics in Chevalley groups]{Presentations of parabolics in some elementary Chevalley--Demazure groups}

\author[Y. Santos Rego]{Yuri Santos Rego}
\address{Fakult{\"a}t f{\"u}r Mathematik, Universit{\"a}t Bielefeld, Postfach 100131, D-33501, Deutschland}
\email{ysantos@math.uni-bielefeld.de}

\begin{document}

\thispagestyle{empty}

\begin{abstract}
 Given a universal elementary Chevalley--Demazure group $E_\Phi^{sc}(R)$ for which its (standard) parabolic subgroups are finitely generated, we consider the problem of classifying which parabolics $\P(R) \leq \ueCD(R)$ are finitely presented. We show that, under mild assumptions, this is equivalent to the finite presentability of a suitable retract of $\P$ which contains the Levi factor. If the base ring $R$ is a Dedekind domain of arithmetic type, we combine our results with well-known theorems due to Borel--Serre, Abels, Behr and Bux to give a partial classification of finitely presentable $S$-arithmetic subgroups of parabolics in split reductive linear algebraic groups.
\end{abstract}

\maketitle

\section{Introduction} \label{introdaintrodaintro}

\noindent

Let $R$ be a commutative ring with unity and let $\G_\Phi(R)$ be a Chevalley--Demazure group over $R$, where $\Phi$ is the associated (reduced) root system. Generators and relations for the elementary subgroup $E_\Phi(R) \leq \G_\Phi(R)$ and for related groups from algebraic $K$-theory were intensively studied in the last seven decades. Less understood are presentations of their subgroups, the main examples being the Borel subgroups in the cases where $R$ is a field (or more generally a semi-local ring) or a Dedekind domain of arithmetic type. Let $\P(R)$ be a non-trivial standard parabolic subgroup of the elementary subgroup $\ueCD(R)$ of a universal Chevalley--Demazure group $\uCD(R)$. In this work we investigate the following problem: Under which conditions can one assure that $\P(R)$ is finitely presented? Our main result, informally stated, is the following.

\begin{thml} \label{A0}
 Suppose the standard parabolic subgroups of $\ueCD(R)$ are finitely generated. Then, for ``most'' parabolic subgroups $\P(R) \leq \ueCD(R)$, there exists a subgroup $\LE(R) \leq \P(R)$ such that $\P(R)$ is finitely presented if and only if so is $\LE(R)$.
\end{thml}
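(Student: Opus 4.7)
The plan is to identify $\LE(R)$ explicitly as a retract of $\P(R)$ containing the Levi factor, reducing finite presentability of $\P(R)$ to that of $\LE(R)$ via a semi-direct product decomposition. Using the Levi decomposition $\P(R) = L(R) \ltimes U(R)$, where $L(R)$ is the Levi and $U(R)$ the unipotent radical, I would set $\LE(R) := \gera{L(R),\; x_\alpha(R) \tq \alpha \in \Psi}$ for a suitable subset $\Psi$ of the positive roots outside the Levi, chosen so that the root subgroups indexed by the complement $\Psi^c$ span a normal subgroup $M(R) \nsgp \P(R)$ and $\P(R) = \LE(R) \ltimes M(R)$. The qualifier ``most parabolics'' encodes the combinatorial conditions on $\Phi$ and $\P$ under which a non-trivial such splitting exists; a handful of small-rank or degenerate cases (for instance, those with $\LE(R) = \P(R)$ trivially) will have to be excluded.

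The \emph{only if} direction is then immediate: the projection $\P(R) \onto \LE(R)$ killing $M(R)$ exhibits $\LE(R)$ as a retract of $\P(R)$, and retracts of finitely presented groups are finitely presented.

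For the \emph{if} direction I would apply the standard short exact sequence criterion to $1 \to M(R) \to \P(R) \to \LE(R) \to 1$. The hypothesis that the standard parabolics of $\ueCD(R)$ are finitely generated supplies a finite normal-generating set for $M(R)$ inside $\P(R)$. Combining this with the Chevalley commutator formulae, which dictate how the generators of $\LE(R)$ conjugate the root subgroups in $M(R)$, I would augment a finite presentation of $\LE(R)$ by the normal generators of $M(R)$ and the relevant commutator relations, producing a candidate finite presentation of $\P(R)$.

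The main obstacle will be verifying that no further relations are needed—that is, that the Chevalley commutator relations, together with the defining relations of $\LE(R)$, already imply every relation holding in $\P(R)$. This forces a careful analysis of the nilpotent normal subgroup $M(R)$ as an $\LE(R)$-group under conjugation, and in particular of the choice of $\Psi$: every root subgroup in $M(R)$ must be reachable by bounded iterated commutators from the distinguished generators, so that no infinite family of independent ``higher'' relations can appear. This combinatorial constraint on $(\Phi, \P)$ is precisely what restricts the theorem to ``most'' (rather than all) parabolics, and handling it uniformly across root-system types is where I expect the bulk of the technical work to lie.
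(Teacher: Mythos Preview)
Your overall architecture---define $\LE(R)$ as a retract of $\P(R)$ containing the Levi factor, get the ``only if'' direction for free from Stallings' lemma, and prove the ``if'' direction by building a finite presentation of $\P(R)$ from one of $\LE(R)$ plus commutator data---matches the paper's strategy. The paper's choice of $\Psi$ is explicit: one adjoins to $\L_I(R)$ the unipotent root subgroups $\mf{X}_\alpha$ for simple roots $\alpha \in \Delta\setminus I$ that are \emph{not adjacent} to any root in $I$, and the kernel $\mc{K}_I(R)$ is generated by the remaining positive root subgroups.

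Where your proposal goes wrong is in locating the source of the qualifier ``most''. You attribute it to a combinatorial constraint on $(\Phi,\P)$ governing whether the splitting $\P(R)=\LE(R)\ltimes M(R)$ exists, or whether root subgroups in $M(R)$ are reachable by iterated commutators. Neither is the issue: the retract $\P_I(R)\twoheadrightarrow\LE_I(R)$ exists for \emph{every} standard parabolic (Proposition~\ref{LEretrato}), and reachability of root subgroups via commutators is never the obstruction. The genuine obstacle, which your sketch does not address, lies in the \emph{additive} relations: each root subgroup $\mf{X}_\gamma\cong\Addi(R)$ in the kernel carries infinitely many relations of the form $x_\gamma(r)x_\gamma(s)=x_\gamma(r+s)$ (equivalently, a full set of defining relators for $(R,+)$), and these are \emph{not} consequences of Chevalley commutator formulae alone. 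The paper's real work (Sections~\ref{provaBorel}--\ref{provaParabs}) is to show that these additive relations in $\mc{K}_I(R)$ are redundant once one has the relations of $\LE_I(R)$: one writes $x_\gamma(a)$, for $a$ an additive relator, as a commutator $[x_\alpha(a),x_\eta(\cdot)]$ with $\alpha$ in the Levi part, where $x_\alpha(a)=1$ already holds. This manoeuvre requires the structure constants to be units (the {\NVB} condition) or $\Bzero$ to be finitely presented, and it is precisely here---not in any splitting combinatorics---that the exceptional $G_2$ parabolic and the {\QG} hypothesis on $R$ enter. Your plan would need to confront this step to go through.
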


\setcounter{thml}{0}

A precise statement of Theorem \ref{A0} will be given in Section \ref{main}. Before doing that, we elucidate the origins of the subgroup $\LE(R)$ above. Applications to the arithmetic case can be found in Section \ref{arithmetic}.

\subsection{Generalities} \label{introdaintro} Our starting point is the Levi decomposition $ \P(R) = \U(R) \rtimes \L(R) $, where $\U$ and $\L$ denote the unipotent radical and the Levi factor, respectively. Of course, if both $\U(R)$ and $\L(R)$ are finitely presented, then so is $\P(R)$. It is often the case, however, that $\U(R)$ is not even finitely generated. Since $\L(R)$ is a retract of $\P(R)$, finite presentability of the latter implies that of the former, and also that $\U(R) \nsgp \P(R)$ must be finitely generated as a normal subgroup. Furthermore, $\L(R)$ being finitely generated implies that $R$ is finitely generated as a ring. With those ingredients at hand, one might proceed to ask: What about going the other way around, that is, are those conditions sufficient?

Suppose an arbitrary semi-direct product $G = N \rtimes Q$ is given. Even assuming that $Q$ admits a presentation with some ``nice'' property (e.g. being finite or compact) and that $N$ is ``well-behaved'' with respect to the $Q$-action, there is no general characterization for when the same nice property holds for $G$. Even in particular cases, proving that $G$ has a ``nice'' presentation might be tricky (confer, for instance, the examples given in \cite{BBMS}). One then tipically varies the families of groups occurring in the short exact sequence $N \into G \onto Q$ to observe how (qualitative) properties of presentations of $G$ change with respect to $Q$ and $N$---a particularly successful case being that of metabelian groups \cite{BieriStrebel, LennoxRobinson}.

For the (split) short exact sequence $\U(R) \into \P(R) \onto \L(R)$, however, the expectations on the Levi factor $\L(R)$ are high: If the base ring $R$ is ``good enough'', then the action of $\L(R)$ on the unipotent radical $\U(R)$ is fairly well-understood, and some important structural and representation theoretical results hold (see e.g. \cite{AzadBarrySeitz, Stavrova09}). One might then expect some mild conditions under which a given nice presentation of $\L(R)$ can be enlarged to a nice presentation of $\P(R)$. Intuitively, the question is whether the Levi factor is ``too far'' from determining the whole parabolic $\P(R)$ and whether this well-understood action of $\L(R)$ on the unipotent radical $\U(R)$ is ``strong enough''. We aim to make this more precise.

\subsection{Detecting finite presentability of parabolics in \texorpdfstring{$\GL_n$}{GLn} -- an example} \label{exemplao} 
The simplest example of universal Chevalley--Demazure group scheme is the special linear group $\SL_n$, which coincides with its elementary subgroup $E_n$ if $n$ is sufficiently large and the base ring $R$ is e.g. Euclidean, semi-local or polynomial (or even Laurent polynomial) over a regular ring with vanishing special $K_1$ group (see, for instance, \cite{HahnO'Meara}).

Now, if $\mc{P}(R) \leq \SL_n(R)$ is a finitely generated parabolic, then via the determinant map we see that $\mc{P}(R)$ is finitely presented only if so is the corresponding parabolic subgroup $P(R)$ in the general linear group $\GL_n(R)$. This allows one to first look at examples of parabolic subgroups of some $\GL_n(R)$ as a test case.

Recall that a (standard) parabolic subgroup $P$ of $\GL_n(R)$ is a subgroup of generalized upper triangular matrices, that is, a subgroup of the following form.

\[
P =
\begin{pmatrix}
 \GL_{n_1} & * & * & \cdots & * \\
 \mbf{0} & \GL_{n_2} & * &  & \vdots \\
 \vdots & \ddots & \ddots & \ddots & \vdots \\
 \vdots & & \ddots & \ddots & * \\
 \mbf{0} & \cdots & & \mbf{0} & \GL_{n_k}
\end{pmatrix} \leq \GL_n(R),
\]
where $n_i \in \Zum$ for all $i$ and $n_1 + \ldots + n_k = n$. The \emph{trivial} parabolic subgroups in this case are $\GL_n(R)$ itself and the Borel subgroup $B_n(R) < \GL_n(R)$, i.e. the group of upper triangular matrices, for which $n_i = 1$ for all $i$. Thus, a non-trivial parabolic subgroup $P$ is constructed by starting with $B_n$ and enlarging some of the $1 \times 1$ blocks on the diagonal. Here, each occurrence of a $1 \times 1$ block on the diagonal corresponds to a copy of the multiplicative group of units $\GL_1(R) = R^\times$ of the base ring.

Let then $R$ equal $\laurent{\Z}$, the ring of integer Laurent polynomials. Here, Suslin's results apply \cite[Thm. 7.8 and Cor. 7.10]{SuslinSLnPoly} and so $\SL_n(R) = E_n(R)$ for $n \geq 3$. Now, consider the following parabolic subgroups of $\GL_{12}(\laurent{\Z})$.
\[
P_1 =
\begin{pmatrix}
 \GL_{1} & * & \cdots & * \\
 \mbf{0} & \GL_{5} & \ddots & \vdots \\
 \vdots & \ddots & \GL_{1} & * \\
 \mbf{0} & \cdots & \mbf{0} & \GL_{5}
\end{pmatrix}, ~ 
P_2 =
\begin{pmatrix}
 \GL_{5} & * & \cdots & * \\
 \mbf{0} & \GL_{1} & * & \vdots \\
 \vdots & 0 & \GL_{1} & * \\
 \mbf{0} & \cdots & \mbf{0} & \GL_{5}
\end{pmatrix}.
\]
Their Levi factors are, respectively, the subgroups
\[
L_1 =
\begin{pmatrix}
 \GL_{1} & \mbf{0} & \cdots & \mbf{0} \\
 \mbf{0} & \GL_{5} & \ddots & \vdots \\
 \vdots & \ddots & \GL_{1} & \mbf{0} \\
 \mbf{0} & \cdots & \mbf{0} & \GL_{5}
\end{pmatrix}, ~ 
L_2 =
\begin{pmatrix}
 \GL_{5} & \mbf{0} & \cdots & \mbf{0} \\
 \mbf{0} & \GL_{1} & 0 & \vdots \\
 \vdots & 0 & \GL_{1} & \mbf{0} \\
 \mbf{0} & \cdots & \mbf{0} & \GL_{5}
\end{pmatrix}.
\]
So both $L_1$ and $L_2$ are isomorphic to the direct product $\GL_1(\laurent{\Z})^2 \times \GL_5(\laurent{\Z})^2$.

We first observe that the necessary conditions for finite presentability of $P_1$ and $P_2$ are met. To begin with, the Laurent polynomials are additively generated by the set of powers $\set{t^n \tq n \in \Z}$. In other words, $\Z[t,t^{-1}]$ is generated, as a ring, by the singleton $\set{t}$. Since $\Z[t,t^{-1}]$ is a localization of the polynomial ring $\Z[t]$, it is a regular noetherian ring and its stable rank is at most 3 \cite[Thm. 4.1.11]{HahnO'Meara}, whence the unstable $K$-groups $K_{1,5}(\Z[t,t^{-1}])$ and $K_{2,5}(\Z[t,t^{-1}])$ are isomorphic to the $K$-groups $K_1(\Z[t,t^{-1}])$ and $K_2(\Z[t,t^{-1}])$, respectively \cite[Section 4.2E]{HahnO'Meara}. By Quillen's fundamental theorem \cite[Section 6, Corollary to Thm. 8]{KQuillen}, it then follows that
\begin{align*}
K_{1,5}(\laurent{\Z}) & \cong K_1(\Z) \oplus K_0(\Z) \mbox{ and} \\
K_{2,5}(\laurent{\Z}) & \cong K_2(\Z) \oplus K_1(\Z), 
\end{align*}
so the groups on the left hand side are both finitely generated (see e.g. \cite[p. 75]{Rosenberg} and \cite[Section 10]{KMilnor}). As a consequence, the $\GL_5(\laurent{\Z})$ blocks in the Levi factors $L_1$ and $L_2$ are finitely presented. Since $\GL_1(\laurent{\Z}) = \gera{\pm t} \cong \Z \rtimes C_2$, it follows that $L_1$ and $L_2$ are finitely presented. The unipotent radicals are given, respectively, by
\[
U_1 =
\begin{pmatrix}
 1 & * & \cdots & * \\
 \mbf{0} & \mbf{I}_5 & \ddots & \vdots \\
 \vdots & \ddots & 1 & * \\
 \mbf{0} & \cdots & \mbf{0} & \mbf{I}_5
\end{pmatrix}, ~ 
U_2 =
\begin{pmatrix}
 \mbf{I}_5 & * & \cdots & * \\
 \mbf{0} & 1 & * & \vdots \\
 \vdots & 0 & 1 & * \\
 \mbf{0} & \cdots & \mbf{0} & \mbf{I}_5
\end{pmatrix},
\]
so they are nilpotent groups whose factors are direct sums of the underlying additive group $(\laurent{\Z},+)$. Since $L_1$ and $L_2$ contain the diagonal subgroup of $\GL_{12}(\laurent{\Z})$, it follows from the conjugation action of diagonal matrices that $U_1$ and $U_2$ are finitely generated as normal subgroups of $P_1$ and $P_2$, respectively.

So far, so good. Let us now look at $P_1$. Since $U_1$ is nilpotent, one can make use of its descending central series to construct a presentation for it whose ``most important'' relations---besides the ones induced by the additive relations from $\laurent{\Z}$---are just the necessary commutator relations between elementary matrices (see Section \ref{Levi}). However, these are necessarily infinite in number as the additive group $(\laurent{\Z},+)$ is infinitely generated. By making use of the diagonal matrices, we shall see in Section \ref{teoremao} how to reduce those commutator relations to just finitely many. Furthermore, we will see how to push the remaining additive relations induced by $\laurent{\Z}$ from $U_1$ to $L_1$ so that such relations are actually consequences of analogue relations found in $L_1$. It will then follow that $P_1$ is in fact finitely presented, as our intuition on the ``strong'' action of $L_1$ on $U_1$ might have predicted.

However, even though the Levi factors $L_1$ and $L_2$ are isomorphic, the parabolic subgroup $P_2$ is \emph{not} finitely presented. In fact, a map sending the $\GL_5$ blocks and most of the unipotent radical $U_2$ to the identity induces a \emph{retraction}, depicted below, of $P_2$ onto the Borel subgroup of $\GL_2(\laurent{\Z})$.
\[
\begin{pmatrix}
 \GL_{5} & * & \cdots & * \\
 \mbf{0} & \GL_{1} & * & \vdots \\
 \vdots & 0 & \GL_{1} & * \\
 \mbf{0} & \cdots & \mbf{0} & \GL_{5}
\end{pmatrix}
\onto
\begin{pmatrix}
 \mbf{I}_5 & \mbf{0} & \cdots & \mbf{0} \\
 \mbf{0} & \GL_{1} & * & \vdots \\
 \vdots & 0 & \GL_{1} & \mbf{0} \\
 \mbf{0} & \cdots & \mbf{0} & \mbf{I}_5
\end{pmatrix}
\cong
\begin{pmatrix}
 * & * \\
 0 & *
\end{pmatrix}
\]
Such Borel subgroup clearly contains the matrices 
\[
\begin{pmatrix} t & 0 \\ 0 & t^{-1} \end{pmatrix} \mbox{ and } \begin{pmatrix} 1 & 1 \\ 0 & 1 \end{pmatrix},
\]
so it cannot be finitely presented by a result of Krsti\'c--McCool \cite[Section 4]{KrsticMcCool}. Therefore $P_2$ itself cannot be finitely presented.

This example suggests the following. Though the Levi factor alone might fail to detect whether the whole parabolic is finitely presentable or not, one could remedy the situation by slightly enlarging it as follows and then ask if the resulting group encodes the desired information. Let us call \emph{\textcolor{blue}{regular}} the $\GL_{n_i}$ blocks of the Levi factor for which $n_i \geq 2$. For every sequence of \emph{at least} two $\GL_1$ blocks in a row, define a \emph{\textcolor{red}{triangular} block} to be the subgroup generated by the elementary matrices that occur right above those $\GL_1$ blocks. In this set-up, we define the \emph{extended Levi factor} of the given parabolic to be its subgroup generated by the diagonal matrices and by both its regular and its triangular blocks. For $P_1$ and $P_2$, the respective extended Levi factors $LE_1$ and $LE_2$ are depicted below.
\[
P_1 =
\begin{pmatrix}
 \GL_{1} & * & \cdots & * \\
 \mbf{0} & \GL_{5} & \ddots & \vdots \\
 \vdots & \ddots & \GL_{1} & * \\
 \mbf{0} & \cdots & \mbf{0} & \GL_{5}
\end{pmatrix}
\geq 
\begin{pmatrix}
 \GL_{1} & \multicolumn{1}{|c}{\mbf{0}} & \cdots & \mbf{0} \\ \cline{1-2}
 \mbf{0} & \multicolumn{1}{|c|}{\textcolor{blue}{\GL_{5}}} & \ddots & \vdots \\ \cline{2-3}
 \vdots & \ddots & \multicolumn{1}{|c|}{\GL_{1}} & \mbf{0} \\ \cline{3-4}
 \mbf{0} & \cdots & \multicolumn{1}{c|}{\mbf{0}} & \textcolor{blue}{\GL_{5}}
\end{pmatrix} = LE_1,
\]

\[
P_2 = 
\begin{pmatrix}
 \GL_{5} & * & \cdots & * \\
 \mbf{0} & \GL_{1} & * & \vdots \\
 \vdots & 0 & \GL_{1} & * \\
 \mbf{0} & \cdots & \mbf{0} & \GL_{5}
\end{pmatrix}
\geq
\begin{pmatrix}
 \textcolor{blue}{\GL_{5}} & \multicolumn{1}{|c}{\mbf{0}} & \cdots & \mbf{0} \\ \cline{1-3}
 \mbf{0} & \multicolumn{1}{|c}{\GL_{1}} & \multicolumn{1}{c|}{\textcolor{red}{*}} & \vdots \\ 
 \vdots & \multicolumn{1}{|c}{\textcolor{red}{0}} & \multicolumn{1}{c|}{\GL_{1}} & \mbf{0} \\ \cline{2-4}
 \mbf{0} & \cdots & \multicolumn{1}{c|}{\mbf{0}} & \textcolor{blue}{\GL_{5}}
\end{pmatrix} = LE_2.
\]
We see that $LE_1$ coincides with the Levi factor $L_1$ because $P_1$ contains no triangular blocks, whereas $L_2$ is a proper subgroup of $LE_2$ and the latter contains the obstructive Borel subgroup described above. In particular, $LE_1$ is finitely presented (as well as $P_1$), but $LE_2$, and so $P_2$, are not.

\subsection{Main results} \label{main} We extend the ideas presented in Section \ref{exemplao} to universal elementary Chevalley--Demazure groups and confirm that, in many cases, the extended Levi factor indeed gives a characterization of finite presentability of parabolics. In this paper we closely follow Steinberg's notation for the Chevalley--Demazure groups \cite{Steinberg}.

Here, $\Phi$ denotes a reduced, irreducible root system (see e.g. \cite[Chap. 6]{BourbakiLie4-6} or \cite[Chap. 3]{Humphreys}). Fix a set $\Delta \subseteq \Phi$ of simple roots and an arbitrary total order on $\Delta$ compatible with height of roots. We then speak of standard parabolic subgroups relative to this arbitrary, but fixed, ordered $\Delta$. We also do not distinguish between $\Delta$ and the corresponding Dynkin diagram, so topological (or combinatorial) properties of simple roots, such as adjacency, are interpreted as properties of vertices in the Dynkin diagram.

Recall that the elementary subgroup $\ueCD(R)$ is the subgroup of $\uCD(R)$ generated by all the unipotent root subgroups $\mf{X}_\alpha = \langle \{ x_\alpha(r) \tq \alpha \in \Delta,~ r \in R \} \rangle \leq  \uCD(R)$. For $\SL_n$, i.e. the case $\Phi = A_{n-1}$, the group $E_{A_{n-1}}^{sc}(R) =: E_n(R)$ is just the subgroup generated by elementary matrices. Given $I \subseteq \Delta$, denote by $\P_I(R) \leq \ueCD(R)$ the parabolic subgroup associated to it (see Section \ref{preliminar} for details).

\begin{def*}
 Let $\P_I(R) \leq \ueCD(R)$ be a (standard) parabolic subgroup. If $I \neq \vazio$, its \emph{extended Levi factor}, denoted $\LE_I(R)$, is given by
\[
\LE_I(R) := \gera{\L_I(R), \mf{X}_\alpha \tqalt \alpha \in \Delta \backslash I \mbox{ is not adjacent to any element of } I}.
\]
If $I = \vazio$, then the $n=\rk(\Phi)$ extended Levi factors of $\P_\vazio(R)$ are given by
\[
\LE_i(R) := \gera{\L_\vazio(R), \mf{X}_{\alpha_i} \tqalt \alpha_i \mbox{ is the } i\mbox{-th root of } \Delta}.
\]
\end{def*}

In the notation of Section \ref{exemplao}, for the case $I \neq \vazio$ the root subgroups $\mf{X}_\alpha$ with $\alpha \in \Delta \backslash I$ span the triangular blocks of $\P_I(R)$. The Levi factor $\L_I(R)$ is generated by both the regular blocks and the standard torus, which in the Chevalley--Demazure setting plays the role of the subgroup of diagonal matrices. The main result of this paper is the following.

\begin{thml}[restated] \label{A}
Let $\ueCD(R)$ be a universal elementary Chevalley--Demazure group for which its (standard) parabolic subgroups are finitely generated. If the base ring $R$ is \QGff, then a standard parabolic subgroup $\P_I(R) \leq \ueCD(R)$ is finitely presented if and only if its extended Levi factors are finitely presented, except possibly in the case where $I = \set{\alpha}$ with $\alpha$ a long root in the root system of type $G_2$.
\end{thml}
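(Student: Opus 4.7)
The forward direction would follow from realizing $\LE_I(R)$ as a retract of $\P_I(R)$. My plan is to let $N \nsgp \P_I(R)$ be the subgroup generated by the root subgroups $\mf{X}_\alpha$ with $\alpha \in \Phi^+$ not occurring in the construction of $\LE_I(R)$; the non-adjacency clause in the definition of $\LE_I$, combined with the Chevalley commutator formula, ensures that each such $\mf{X}_\alpha$ is normalized by the generators of $\LE_I(R)$, so that $N$ is normal in $\P_I(R)$ and the natural map identifies $\P_I(R)/N$ with $\LE_I(R)$, splitting the inclusion $\LE_I(R) \into \P_I(R)$. The classical fact that a finitely generated retract of a finitely presented group is itself finitely presented then delivers the ``only if'' direction.

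For the converse, assume every extended Levi factor $\LE_I(R)$ (or $\LE_i(R)$ in the case $I = \vazio$) is finitely presented. Starting from a finite presentation $\gera{S_L \mid R_L}$ of $\LE_I(R)$, I would enlarge it to a finite presentation of $\P_I(R)$ by adjoining: (i) one new generator $x_\alpha(r_j)$ for each positive root $\alpha$ not already used in $\LE_I(R)$ and each element $r_j$ of a finite ring-generating set of $R$ (such a set exists since finite generation of the parabolics forces $R$ to be finitely generated as a ring); (ii) the finitely many Chevalley commutator relations among the new generators and the generators of $\LE_I(R)$; and (iii) finitely many relations describing the conjugation of the new root subgroups by a finite generating set of the torus $\L_\vazio(R) \leq \L_I(R)$. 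The task is then to deduce every relation of the standard infinite Steinberg-type presentation of $\P_I(R)$ from this finite list, using the torus action and induction on the height filtration of $\U_I(R)$ to produce all symbols $x_\alpha(r)$ for $r \in R$.

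The principal obstacle is to collapse the infinitely many additive relations inherent in each $\mf{X}_\alpha \cong (R, +)$ lying strictly in $\U_I(R)$. Conjugating $x_\alpha(r+s) = x_\alpha(r) x_\alpha(s)$ by a torus element only rescales both $r$ and $s$ by a common unit, which on its own cannot reach every additive relation of $R$. Here the hypothesis \QGff is designed to do the heavy lifting: it should guarantee that each additive relation needed inside a unipotent root subgroup $\mf{X}_\alpha \leq \U_I(R)$ can be transferred to an analogous additive relation already valid inside some root subgroup $\mf{X}_\gamma \leq \LE_I(R)$, through a pair of roots $\gamma, \delta$ with $\gamma + \delta = \alpha$ and nonzero Chevalley structure constant. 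The existence of such a ``witness'' pair is precisely what controls the exception $I = \set{\alpha}$ with $\alpha$ a long root of $G_2$: the unique simple root outside $I$ is the short root adjacent to $\alpha$, and the higher-order structure constants entering the relevant commutator obstruct this transfer. I would therefore formulate the converse outside this configuration, with the case analysis verifying the transfer mechanism for each remaining pair $(\Phi, I)$ forming the main technical body of the proof.
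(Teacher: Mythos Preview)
Your forward direction via the retract $\P_I(R) \onto \LE_I(R)$, with kernel the subgroup generated by the remaining positive root subgroups, is exactly the paper's Proposition~\ref{LEretrato} combined with Lemma~\ref{Stallings}; the normality check via the Chevalley formula is correct.

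For the converse, the overall architecture you describe---enlarge a finite presentation of $\LE_I(R)$ by finitely many new root generators and finitely many Chevalley and Steinberg relations, then show the standard presentation follows---is indeed the paper's strategy. But your account of how \QGff\ does the ``heavy lifting'' covers only half of that hypothesis. Recall that \QGff\ is a \emph{disjunction}: either $\Bzero$ is finitely presented, or $R$ is \NVBff. Your transfer mechanism, writing $\alpha = \gamma + \delta$ with $\mf{X}_\gamma \leq \LE_I(R)$ and pushing an additive relation from $\mf{X}_\gamma$ to $\mf{X}_\alpha$ through the commutator, only works when the relevant structure constant $C^{\gamma,\delta}_{1,1}$ is a unit---that is, under \NVB. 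If $\Bzero$ is finitely presented but $R$ is \emph{not} \NVB\ (say $\Phi = B_n$ with $2 \notin R^\times$), the commutator lands in $x_\alpha(C \cdot a)$ with $C$ non-invertible and you cannot recover $x_\alpha(a)=1$. The paper handles this branch by a genuinely different device: for each $\gamma \in \Phi^+ \setminus \Phi_{\Ext(I)}$ it adjoins a copy of a fixed finite presentation of $\Bzero \cong \gera{\mf{X}_\gamma, \H_\gamma}$ (using the Weyl-group argument of Lemma~\ref{XHF2}), thereby importing the additive relations for $\mf{X}_\gamma$ directly rather than deducing them from $\LE_I(R)$ via commutators. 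Your plan, as stated, has no mechanism for this branch.

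A second imprecision concerns the $G_2$ exception. In the paper the additive-relation transfer actually \emph{does} go through for $\P_{\{\alpha\}}(R)$ with $\alpha$ long in $G_2$, once one assumes the commutator relations. What fails is the verification of \emph{commutativity within a single root subgroup}, i.e.\ showing $[\til{x}_\gamma(r),\til{x}_\gamma(s)]=1$ for $\gamma$ the short simple root: expressing $\til{x}_\gamma(s)$ as a commutator in $G_2$ produces several higher-root correction terms that the paper's inductive scheme cannot cancel (cf.\ Remark~\ref{precisao}). So the obstruction is not the absence of a witness pair $(\gamma,\delta)$ for the additive transfer, but the extra $G_2$-specific terms in the commutator expansion.
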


The proof is given in Section \ref{teoremao} using generators and relators \`a la Steinberg, and is elementary in the sense that it heavily relies only on so-called ``elementary calculations'', i.e. commutator calculus paired with the Chevalley commutator formula for root subgroups. 

Theorem \ref{A} still needs some explanation. Denote by $\Bzero \leq E_{A_1}^{sc}(R)$ the standard Borel subgroup in type $A_1$. We say that $R$ is ``quite good'' for the root system $\Phi$---or {\QG} for short---if $\Bzero$ is finitely presented \emph{or} $R$ is ``not very bad'' for $\Phi$ (abbreviated \NVB), that is to say
\[
\begin{cases}
 2 \in R^\times, & \mbox{if } \Phi \in \set{B_n, C_n, F_4};\\
 2, 3 \in R^\times, & \mbox{if } \Phi = G_2.
\end{cases}
\]
The {\NVB} condition is a common assumption when dealing with elementary calculations (see, for instance, \cite{Stavrova09, AzadBarrySeitz, VavilovParabs, SuzukiParabs, Stein}). The point is that both the finite presentability of $\Bzero$ and the {\NVB} condition allow one to overcome the technicalities with structure constants that appear in the commutators. Nevertheless, we strongly suspect that the equivalence in Theorem \ref{A} holds for arbitrary (finitely generated) commutative rings (with $1$). The exceptional case $\P_{\set{\alpha}}(R) \leq E_{G_2}^{sc}(R)$ above---with $R$ being {\NVBff} but $\Bzero$ \emph{not} finitely presented---is the only instance where our computations with the chosen defining relators were inconclusive, and the problem there can be made quite explicit; see Remark \ref{precisao} for details. The term ``most parabolics'' from the previous formulation (Section \ref{introdaintrodaintro}) thus means that the possible exceptions for Theorem \ref{A} are the parabolic subgroup $\P_{\set{\alpha}}(R) \leq E_{G_2}^{sc}(R)$, with $\alpha$ long and $\Bzero$ \emph{not} finitely presented, and the parabolics $\P_I(R) \leq E_{\Phi}^{sc}(R)$ for $R$ \emph{not} \QGff.

For ease of reference, we state below a special case of Theorem \ref{A} as a corollary. Recall that a standard parabolic subgroup $\P_I(R) \leq \ueCD(R)$ is \emph{maximal} if the only standard parabolic subgroup properly containing it is the whole elementary group $\ueCD(R)$. In particular, every root in $\Delta$ is adjacent to $I$ in this case, so we get $\LE_I(R) = \L_I(R)$ whenever $\P_I(R)$ is maximal. Now, if $\Phi$ is simply-laced, that is, if all its roots have the same length, then every ring is {\QGff} because the {\NVB} condition imposes no restriction on such root systems. We thus obtain the following.

\begin{corl} \label{classifsimplylaced}
 Suppose that all (standard) parabolic subgroups of $\ueCD(R)$ are finitely generated and that the root system $\Phi$ is simply-laced of rank at least 2. Then a standard, non-trivial, maximal parabolic subgroup of $\ueCD(R)$ is finitely presented if and only if its Levi factor is finitely presented.
\end{corl}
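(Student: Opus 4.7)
The plan is to reduce the statement to Theorem \ref{A}. I would argue that in the hypotheses of the corollary, every condition needed to invoke the theorem is automatic, and moreover the definition of $\LE_I(R)$ collapses to that of $\L_I(R)$, so Theorem \ref{A} gives the claimed equivalence directly.

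First I would verify the \QGff{} assumption. By definition, $R$ is \QGff{} if either $\Bzero$ is finitely presented or $R$ is \NVBff. Since the \NVB{} condition imposes constraints only on the non-simply-laced types $B_n, C_n, F_4, G_2$, the hypothesis that $\Phi$ is simply-laced makes \NVB{} vacuous, so every commutative ring $R$ is automatically \NVBff, hence \QGff. By the same token, the exceptional case excluded in Theorem \ref{A}, which concerns a long root in $G_2$, cannot occur here because all roots of a simply-laced system have the same length and $G_2$ is not simply-laced.

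Next I would show that $\LE_I(R) = \L_I(R)$ for every non-trivial maximal standard parabolic $\P_I(R)$. Maximality of $\P_I(R)$ means that $I$ is a maximal proper subset of $\Delta$, so $\Delta \setminus I = \set{\alpha}$ for a unique simple root $\alpha$; and since $\rk(\Phi) \geq 2$, the set $I$ is non-empty. Because $\Phi$ is irreducible, its Dynkin diagram is connected; hence the single vertex $\alpha$ must be adjacent to at least one vertex of $I$. Consequently there is no root in $\Delta \setminus I$ that fails to be adjacent to $I$, and the definition
\[
\LE_I(R) = \gera{\L_I(R), \mf{X}_\alpha \tqalt \alpha \in \Delta \backslash I \mbox{ is not adjacent to any element of } I}
\]
degenerates to $\L_I(R)$.

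Combining these observations with the standing hypothesis that every standard parabolic of $\ueCD(R)$ is finitely generated, Theorem \ref{A} applies and yields the equivalence $\P_I(R) \text{ finitely presented} \iff \LE_I(R) = \L_I(R) \text{ finitely presented}$. Since the substance of the argument is bundled inside Theorem \ref{A}, I do not expect a genuine obstacle; the only conceptual step is the connectedness argument ruling out extra triangular-block generators in the extended Levi factor of a maximal parabolic.
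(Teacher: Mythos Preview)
Your proposal is correct and follows essentially the same route as the paper: the paper notes just before the corollary that maximality forces every simple root outside $I$ to be adjacent to $I$, so $\LE_I(R) = \L_I(R)$, and that for simply-laced $\Phi$ the \NVB{} condition is vacuous, whence Theorem~\ref{A} applies. Your only additions are the explicit justification via connectedness of the Dynkin diagram and the observation that $I \neq \vazio$ when $\rk(\Phi) \geq 2$, both of which are welcome but do not depart from the paper's argument.
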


As stated, the proof of Theorem \ref{A} is done via elementary calculations, so similar methods occur in many places in the literature, with different applications in mind. For instance, there has been a great deal of work on the structure of normal subgroups of linear groups related to elementary subgroups (cf. \cite{BassMilnorSerre, Matsumoto, Abe69, AbeSuzuki, Abe89, Vaserstein, Stavrova09}). Generators and relators themselves and low-dimensional $K$-groups have been investigated e.g. in \cite{Steinberg0, Cohn, O'Meara, Silvester, Stein, SteinK1K2, RehmannSoule, HahnO'Meara, KrsticMcCool} as well as in \cite{Siegfried}, where, it seems, the computations most closely resemble the ones we do here. More recently, similar calculations have also been employed in the Kac--Moody setting, starting right from Tits' presentation in \cite{TitsKM}, and then e.g. in \cite{Allcock16St, CapLuRe, Allcock16KM, AllcockCarbone}. It is likely that results analogous to ours can be extended to some subgroups of Kac--Moody groups.

\subsection{Motivation and application -- the arithmetic case} \label{arithmetic} There has been a long quest to understand finitely presentable $S$-arithmetic groups. The theory took a serious turn when Nagao showed in \cite{Nagao}, in particular, that $\SL_2(\F_q[t])$ is not even finitely generated. Such a phenomenon did not seem to occur in characteristic zero, and after important developments and examples established by many mathematicians, A. Borel and J.-P. Serre \cite{BoSe} proved that any $S$-arithmetic subgroup of a reductive group over an algebraic number field is finitely presented.

In contrast, the discoveries in the function field case pointed out to a dependency on both the rank of the underlying reductive group and on the number of places $|S|$ to achieve finite presentability. A full characterization was finally given by H. Behr in \cite{Behr98} when he established the 2-dimensional rank theorem.

Until the early 80's, the theory was in poor shape for arbitrary algebraic subgroups of reductive (or even semi-simple) groups. In a remarkable work, H. Abels completely classified all finitely presentable $S$-arithmetic subgroups in characteristic zero by reducing the problem to the soluble case and establishing necessary and sufficient conditions for finite presentability there \cite{Abels}. He showed, in particular, that an $S$-arithmetic Borel subgroup is always finitely presented.

Again, the case was different in positive characteristic \cite{Bux0}, and K.-U. Bux later proved, in particular, that such $S$-arithmetic Borel subgroups are finitely presented if and only if $|S| \geq 3$ \cite{Bux04}. The main motivation for this work was precisely the natural follow-up question to those classification results: What happens in between, i.e. what about $S$-arithmetic subgroups of parabolic subgroups of reductive groups? We apply Theorem \ref{A} to obtain a partial classification of finite presentability in this case.

We say that a linear algebraic group $\mathbf{H}$, defined over a field $k$, retracts onto an \emph{almost Borel group} if there exists a $k$-retraction $r : \mbf{H} \onto \Addi \rtimes \mbf{T}$ onto a split, connected, soluble $k$-group $\Addi \rtimes \mbf{T}$ such that $\mathbf{T}$ is a $k$-split subtorus of $\mbf{H}$ of rank at least 1 acting non-trivially on $\Addi$. We combine our results with the well-known theorems of Borel--Serre, Abels, Behr and Bux mentioned above to, on the one hand, recover a familiar fact in the number field case and, on the other hand, establish finite presentability of $S$-arithmetic groups in new cases.

\begin{thml} \label{B}
Let $\mbf{G}$ be a split, connected, reductive, linear algebraic group defined over a global field $\K$ and let $S \neq \vazio$ be a finite set of places of $\K$ containing all the archimedean ones. Suppose $|S| > 1$ if $\K$ is a global function field and let $\mbf{P} \leq \mbf{G}$ be an arbitrary proper parabolic subgroup (possibly a Borel subgroup).
\begin{enumerate}
 \item \label{B.1} If $\carac(\K) = 0$, then the $S$-arithmetic subgroups of $\mbf{P}$ are always finitely presented;
 \item Assume $\carac(\K) > 0$.
  \begin{enumerate}
   \item \label{B.2a} If $\mbf{P}$ retracts onto an almost Borel group, then its $S$-arithmetic subgroups are finitely presented if and only if $|S| \geq 3$;
   \item \label{B.2b} Otherwise, and if $\K$ is {\bf NVB} for the underlying root system of $\mbf{G}$, then an $S$-arithmetic subgroup $\Gamma \leq \mbf{P}$ is finitely presented if and only if an $S$-arithmetic subgroup $\Lambda$ of the Levi factor $\mbf{L} \leq \mbf{P}$ is finitely presented.
  \end{enumerate}
\end{enumerate}
\end{thml}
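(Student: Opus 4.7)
The plan is to reduce the theorem to a question about standard $S$-arithmetic parabolic subgroups $\P_I(\mathcal{O}_S)\leq\ueCD(\mathcal{O}_S)$ of a universal elementary Chevalley--Demazure group and then to invoke Theorem \ref{A}, interpreting each piece of the extended Levi factor by means of the finite-presentability results of Borel--Serre, Abels, Behr and Bux recalled in Section \ref{arithmetic}. I would first pass to the simply connected cover of $\mbf{G}$, split off the central torus (which is harmless by Dirichlet), and use commensurability invariance of finite presentability to replace any $S$-arithmetic subgroup of $\mbf{P}$ by $\P_I(\mathcal{O}_S)$, where $\mathcal{O}_S$ is the ring of $S$-integers of $\K$ and $I\subsetneq\Delta$ corresponds to $\mbf{P}$. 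The hypotheses of Theorem \ref{A} then need to be verified: parabolic subgroups of $\ueCD(\mathcal{O}_S)$ are finitely generated by classical results under the given $|S|$-assumptions, and $\mathcal{O}_S$ is \QGff\ for $\Phi$---in characteristic zero via Abels \cite{Abels}, in \bref{B.2b} by the {\NVB} hypothesis, and in \bref{B.2a} via Bux \cite{Bux04} whenever $|S|\geq 3$; the case $|S|\leq 2$ of \bref{B.2a} is handled directly via the retraction argument and Bux, without invoking Theorem \ref{A}.

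The decisive structural observation is that $\LE_I(\mathcal{O}_S)$ differs from $\L_I(\mathcal{O}_S)$ by finitely many copies of root subgroups $\mf{X}_\alpha(\mathcal{O}_S)\cong(\mathcal{O}_S,+)$ indexed by $\alpha\in\Delta\setminus I$ non-adjacent to $I$; by the Chevalley commutator formula each such $\mf{X}_\alpha$ is centralized by the semisimple part of $\L_I$ yet acted on by the standard torus via the non-trivial character $\alpha$, hence contributes an almost Borel $\Addi\rtimes\Mult$ inside $\LE_I$. I would then show that $\mbf{P}$ retracts onto an almost Borel over $\K$ if and only if some such $\alpha$ exists, in which case its absence forces $\LE_I(\mathcal{O}_S)=\L_I(\mathcal{O}_S)$. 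With this dictionary in place, item \bref{B.1} follows from Borel--Serre \cite{BoSe} for the Levi and Abels \cite{Abels} for each $\Addi\rtimes\Mult$ block, assembled via Theorem \ref{A}. Item \bref{B.2a} uses the retraction and Bux's theorem \cite{Bux04} for the ``only if'' direction; for the ``if'' direction with $|S|\geq 3$, Behr's theorem \cite{Behr98} handles the Levi and Bux's theorem handles each almost Borel block, yielding finite presentability of $\LE_I(\mathcal{O}_S)$ and hence of $\P_I(\mathcal{O}_S)$. Item \bref{B.2b} is then immediate from Theorem \ref{A} since $\LE_I(\mathcal{O}_S)=\L_I(\mathcal{O}_S)$ by hypothesis.

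The main obstacle is establishing the geometric/combinatorial dictionary: identifying the condition ``$\mbf{P}$ retracts onto an almost Borel'' with the combinatorial condition that some simple root in $\Delta\setminus I$ is non-adjacent to $I$. The forward direction is reasonably direct from the construction of $\LE_I$ together with an explicit almost Borel block, but the converse requires ruling out exotic $\K$-retractions not arising from the standard root--torus decomposition. I would expect to handle this via a direct analysis of $\K$-morphisms out of $\mbf{P}$ using the Levi decomposition and the rigidity of morphisms between split reductive and soluble algebraic groups, together with a case-by-case examination of the image of the unipotent radical under a hypothetical almost Borel retraction.
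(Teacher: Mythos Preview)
Your overall strategy coincides with the paper's: reduce to $\P_I(\mathcal{O}_S)$ inside a universal Chevalley--Demazure group, verify that Theorem~\ref{A} applies, and then read off each item by analysing the extended Levi factor piece by piece using Borel--Serre, Abels, Behr and Bux. Two points deserve comment.

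First, your ``main obstacle'' is a phantom. You worry about the converse direction of the dictionary, i.e.\ showing that if $\mbf{P}$ retracts onto an almost Borel then $\nonAdj(I)\neq\varnothing$. But the proof never uses this implication. For \bref{B.2b} you \emph{assume} that $\mbf{P}$ does not retract onto an almost Borel; the contrapositive of the easy direction (if $\nonAdj(I)\neq\varnothing$ then there is an obvious retraction $\P_I\twoheadrightarrow \mf{X}_\alpha\rtimes\H$) immediately forces $\nonAdj(I)=\varnothing$, hence $\LE_I=\L_I$. For \bref{B.2a} with $|S|\geq 3$, you do not need the dictionary at all: $\Bzero$ is finitely presented (Bux), so Behr handles the reductive blocks and Bux handles any triangular blocks, making $\LE_I(\mathcal{O}_S)$ finitely presented regardless of whether $\nonAdj(I)$ is empty. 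So no analysis of ``exotic $\K$-retractions'' is required.

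Second, there is a genuine gap in your treatment of \bref{B.2b}: you write that it ``is then immediate from Theorem~\ref{A}'', but the excluded $G_2$ parabolic $\P_{\{\alpha\}}$ with $\alpha$ long has $\nonAdj(\{\alpha\})=\varnothing$ and therefore lands squarely in case \bref{B.2b}. Theorem~\ref{A} under {\bf NVB} alone says nothing here. The paper closes this gap (and simultaneously handles the passage from $\G_{\Phi_i}(\mathcal{O}_S)$ to the elementary subgroup $E_{\Phi_i}(\mathcal{O}_S)$, which you also gloss over) by splitting on the ranks of the Levi blocks. When some block has rank $1$, the Levi factor contains a copy of $\SL_2(\mathcal{O}_S)$; if either side of the biconditional holds then $\SL_2(\mathcal{O}_S)$ is finitely presented, whence $|S|\geq 3$ by Behr, whence $\Bzero$ is finitely presented, and now the first branch of the {\bf QG} condition kicks in and Theorem~\ref{A} applies with no $G_2$ exception. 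You should incorporate this rank-$1$ manoeuvre.
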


The proof can be found in Section \ref{aplicacoes} and is a straightforward application of Theorem \ref{A}, the theorems of Borel--Serre, Behr, Abels and Bux, and standard arguments from the theory of arithmetic groups. It should be stressed that Theorem \ref{B} also holds for the exceptional parabolic in type $G_2$ which was excluded from Theorem \ref{A}.

As mentioned, Part \bref{B.1} is not new. Though not formally stated in \cite{Abels}, it originally follows from \cite[Thms. 5.6.1 and 6.2.3]{Abels} together with Kneser's local-global principle \cite{Kneser} and Borel--Tits' compactness theorem \cite[Prop. 9.3]{BorelTits}, or more swiftly as a special case of Tiemeyer's results \cite[Cor. 4.5]{Tiemeyer}, which also rely on \cite[Prop. 9.3]{BorelTits}. The use of Theorem \ref{A} here and Abels' strategy, however, point out to an alternative proof of this that would not depend on \cite[Prop. 9.3]{BorelTits}: Since the Borel subgroups of $\SL_2$ are metabelian, one could mimic the arguments from \cite[Chp. 7]{Abels} in order to apply classical Bieri--Strebel theory \cite{BieriStrebel} directly to the $S$-arithmetic subgroups of such groups and conclude that they are always finitely presented. Pairing this with \cite[Thm. 6.2]{BoSe}, the claim would follow from Theorem \ref{A}. 

Part \bref{B.2a} contains some new cases and includes, in particular, the above mentioned theorem of Bux on the finite presentability of $S$-arithmetic Borel groups in positive characteristic. For the proof we outline two different approaches, one of which is independent of \cite{Bux04}; see Section \ref{aplicacoes} for details. 

The stated results for non-minimal proper parabolics over function fields were, to the best of our knowledge, unknown. They make use of Behr's rank  theorem \cite{Behr98}. Applying it more explicitly, we can make the characterization of the {\NVB} case in \bref{B.2b} more precise: The semi-simple part of $\mbf{L}$ is covered by a direct product of universal Chevalley--Demazure groups $\G_{\Phi_i}^{sc}$, each of which has global rank $d_i = |S| \cdot \rk(\Phi_i)$; putting $d := \mbox{min}_{i}\{d_i\}$, it follows from \cite{Behr98} that $\Gamma$ is finitely presented if and only if $d \geq 3$.

\subsection{Structure of the paper} In the preliminary Section \ref{jargao} we recall the construction of Chevalley--Demazure group schemes and some key properties. The definition of parabolic subgroups as well as some properties to be used in the sequel are given in Section \ref{parabolicos}. For convenience, we summarize in \ref{notacao} the notation to be used throughout. Section \ref{Levi} recalls retraction arguments for presentations of group extensions and introduces the extended Levi factor. We then deal with generators and relators for the unipotent radical and related subgroups. The main result, Theorem \ref{A}, is proved in Section \ref{teoremao}. The following Section \ref{aplicacoes} is devoted to the case of algebraic groups and their $S$-arithmetic subgroups, including the proof of Theorem \ref{B} on finite presentability of $S$-arithmetic parabolics. We finish the paper with some remarks and questions in Section \ref{remarks}.

{\small
\subsection*{Acknowledgments} I thank my advisor, Prof. Kai-Uwe Bux, for his patience and guidance, and for introducing me to this topic. I am indebted to Prof. Herbert Abels for many lively mathematical sessions. Thanks to Stefan Witzel, Dawid Kielak and Alastair Litterick for many helpful conversations and to Paula Macedo Lins de Araujo for her support and her help on a previous version of this paper. This work was supported by the Deutscher Akadamischer Austauschdienst and the Sonderforschungsbereich 701 in Bielefeld, and is part of the author's PhD project.
}

\section{Preliminaries} \label{preliminar}

\subsection{Chevalley--Demazure group schemes} \label{jargao} Chevalley groups play a paramount role in the theories of algebraic groups and finite simple groups and have been intensively studied in the last six decades. Roughly speaking, a Chevalley--Demazure group scheme is a representable functor (cf. \cite{Waterhouse, Kostant}) from the category of commutative rings to the category of groups which is, in some sense, uniquely associated to a complex, connected, semi-simple Lie group and to a certain lattice of weights of the corresponding Lie algebra. We recall the general construction of Chevalley--Demazure group schemes over $\Z$ along the lines of \cite{Abe69} and \cite{Kostant} and state below a slightly more precise definition of such functors. The material presented here is standard and we refer e.g. to \cite{Chevalley, Ree, Kostant, Steinberg, Abe69, BorelSem68, SGA3.3} for details, so the familiar reader might prefer to skip most of this section and refer back to Section \ref{notacao} for notation, if needed. 

Let $G_\C$ be a complex, connected, semi-simple Lie group and let $\Lie{g}$ its Lie algebra with a Cartan subalgebra $\Lie{h} \subseteq \Lie{g}$ and associated (reduced) root system $\Phi \subseteq \Lie{h}^*$. In his seminal Tohoku paper, C. Chevalley established the following.

\begin{thm*}[\cite{ChevalleyTohoku}] \label{Chevalleybasis}
 There exist non-zero elements $X_\alpha\in \Lie{g}$, where $\alpha$ runs over $\Phi$, with the following properties:
 \begin{enumerate}
  \item Given $\alpha, \beta \in \Phi$ with $\alpha \neq -\beta$, if $\alpha+\beta \in \Phi$, then $[X_\alpha,X_\beta] = \pm (m+1)X_{\alpha+\beta}$, where $m$ is the largest integer for which $\beta-m\alpha\in\Phi$; otherwise $[X_\alpha,X_\beta] = 0$;
  \item $X_\alpha \in \Lie{g}_\alpha = \{X \in \Lie{g} \tq \ad(H)X = \alpha(H)X ~\forall H \in \Lie{h} \}$, i.e. each vector $X_\alpha$ belongs to the weight space of $\Lie{h}$ under the adjoint representation;
  \item Setting $H_\alpha = [X_\alpha, X_\alpha]$ and $(\alpha,\beta) := 2\frac{\gera{\alpha,\beta}}{\gera{\beta,\beta}} \in \Z$ for $\alpha, \beta \in \Phi$, one has that $H_\alpha \in \Lie{h} \backslash \set{0}$ and $[H_\alpha, X_\beta] = (\beta, \alpha) X_\beta$;
  \item $\set{H_\alpha}_{\alpha \in \Phi}$ spans $\Lie{h}$, the set $\set{H_\alpha, X_\alpha}_{\alpha \in \Phi}$ is a basis for $\Lie{g}$, and there is a decomposition $\Lie{g} = \Lie{h} \oplus \left( \oplus_{\alpha \in \Phi} \Lie{g}_\alpha \right)$.
 \end{enumerate}
\end{thm*}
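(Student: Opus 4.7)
The plan is to construct the basis step by step starting from the standard root space decomposition $\Lie{g} = \Lie{h} \oplus \bigoplus_{\alpha \in \Phi} \Lie{g}_\alpha$, where each weight space $\Lie{g}_\alpha$ is one-dimensional. First I would pick an arbitrary non-zero $X_\alpha \in \Lie{g}_\alpha$ for each $\alpha \in \Phi$, which handles \emph{(ii)} automatically. For each $\alpha$, the subspace spanned by $X_\alpha$, $X_{-\alpha}$, and $[X_\alpha, X_{-\alpha}]$ forms an $\Lie{sl}_2(\C)$-triple; rescaling $X_{-\alpha}$ by a suitable non-zero scalar lets us arrange that $H_\alpha := [X_\alpha, X_{-\alpha}]$ satisfies $\alpha(H_\alpha) = 2$, i.e. that $H_\alpha$ is the coroot of $\alpha$. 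Property \emph{(iii)} is then immediate from $[H_\alpha, X_\beta] = \beta(H_\alpha)X_\beta = (\beta,\alpha)X_\beta$ on each weight space, and \emph{(iv)} follows because the coroots $\{H_\alpha\}$ span $\Lie{h}$ (since the root lattice spans $\Lie{h}^*$) while $\{X_\alpha\}_{\alpha \in \Phi}$ already covers $\bigoplus_\alpha \Lie{g}_\alpha$.

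Property \emph{(i)} is where the real work happens. Writing $[X_\alpha, X_\beta] = N_{\alpha,\beta} X_{\alpha+\beta}$ whenever $\alpha+\beta \in \Phi$, I would invoke $\Lie{sl}_2$-representation theory on the $\alpha$-string through $\beta$, namely $\{\beta + k\alpha \tq -m \leq k \leq q\}$. The raising and lowering operators $X_\alpha$ and $X_{-\alpha}$ act on this string via the standard formulas for finite-dimensional irreducible $\Lie{sl}_2$-modules, from which one extracts the identity $N_{\alpha,\beta} N_{-\alpha,-\beta} = -q(m+1)$. Combined with the consistency demanded by the Jacobi identity and the normalization of $H_\alpha$ above, this forces $|N_{\alpha,\beta}| = m+1$, giving the required magnitude.

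The hard part, and the heart of the classical argument, is pinning down the signs coherently: the obstruction amounts to a cocycle condition asking that the chosen signs be compatible with \emph{all} Jacobi-identity constraints simultaneously. My strategy is to fix a total order on $\Phi$ compatible with height, choose the signs of $X_\alpha$ for simple roots $\alpha \in \Delta$ freely, and extend inductively on the height of $\alpha + \beta$ by solving for the remaining signs through the Jacobi identity applied to triples $(X_\alpha, X_\beta, X_\gamma)$ with $\alpha+\beta+\gamma$ equal to zero or to a root. Checking that this propagation is well-defined---that two different inductive derivations of $N_{\alpha,\beta}$ yield the same value---is the delicate combinatorial step, but the compatibility ultimately reduces to the $\Lie{sl}_2$-string relations already established and to a case analysis on the root strings, exploiting that the structure constants on each such string are governed by the same $\Lie{sl}_2$-module.
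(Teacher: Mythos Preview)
The paper does not prove this theorem; it is quoted as a classical result with attribution to Chevalley's T\^{o}hoku paper and used purely as background for the construction of Chevalley--Demazure group schemes. There is therefore no in-paper proof to compare your proposal against.

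That said, your outline follows the standard textbook route (as in Humphreys or Carter): root space decomposition for \emph{(ii)} and \emph{(iv)}, normalization of $\Lie{sl}_2$-triples for \emph{(iii)}, and $\Lie{sl}_2$-string analysis for the magnitude in \emph{(i)}. Two remarks. First, your identity $N_{\alpha,\beta}N_{-\alpha,-\beta} = -q(m+1)$ is not quite the relation one obtains directly; the clean way is to first impose a Chevalley involution $\theta$ with $\theta(X_\alpha) = -X_{-\alpha}$, which forces $N_{-\alpha,-\beta} = -N_{\alpha,\beta}$, and then the $\Lie{sl}_2$-module computation on the $\alpha$-string gives $N_{\alpha,\beta}^2 = (m+1)^2$ after the length factors cancel. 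Without the involution, your inductive sign-fixing will not close up. Second, your last paragraph is the genuinely nontrivial part and is underspecified: the well-definedness of the inductive sign propagation is exactly what Chevalley's argument establishes, and it does not reduce merely to ``$\Lie{sl}_2$-string relations already established'' but requires a careful simultaneous choice (typically via the involution $\theta$ together with an ordering, or via Tits' asymmetry-function approach). As written, your sketch asserts the conclusion of that step rather than indicating why the cocycle obstruction vanishes.
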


A basis $\set{H_\alpha, X_\alpha}_{\alpha \in \Phi}$ for $\Lie{g}$ as above is known as a Chevalley basis and the $\Z$-Lie ring $\Lie{g}_\Z$ generated by it is sometimes called a Chevalley lattice. Using $\Lie{g}_\Z$ alone, one may already proceed to construct the first examples of Chevalley--Demazure groups over fields, namely those of adjoint type, which in fact yield many infinite families of simple groups. These were the ones introduced in \cite{ChevalleyTohoku}, popularized as Chevalley groups (see e.g. \cite{Carter}). The next step to construct more general group schemes is to allow for different representations of $\Lie{g}$.

Let $P_{sc} = \set{ \chi \in \Lie{h} \tq \chi(H) \in \Z ~\forall H \in \Lie{h}}$ be the lattice of weights of $\Lie{h}$ and let $P_{ad} = \vspan_\Z(\Phi) \subseteq P_{sc}$ be the root lattice. If $\rho: \Lie{g} \to \Lie{gl}(V)$ is a faithful representation of $\Lie{g}$, then $P_{ad} \subseteq P_\rho \subseteq P_{sc}$, where $P_\rho = \set{ \chi \in \Lie{h}^* \tq V_\chi \neq \set{0}}$ denotes the lattice of weights of the representation $\rho$ (recall that $V_\chi = \set{X \in V \tq \chi(H)X = \rho(H)X ~\forall H \in \Lie{h}}$). Conversely, given $P \subseteq \Lie{h}^*$ with $P_{ad} \subseteq P \subseteq P_{sc}$, there exists a faithful representation $\rho : \Lie{g} \to \Lie{gl}(V)$ such that $P_\rho = P$ (see, for instance, \cite{Ree, BorelSem68, Jacobson}).

Fix a lattice $P := P_\rho$ as above. From Kostant's construction \cite[Thm. 1 and Cor. 1]{Kostant}, one can define a $\Z$-lattice $B_\rho$ of the universal enveloping algebra $U(\Lie{g})$ and a certain family $F$ of ideals of $B_\rho$ \cite[Section 1.3 and p. 98]{Kostant} such that
$$\Z[G_\C,P] := \set{f \in \Hom(B_\rho, \Z) \tq f \mbox{ vanishes on some } I \in F}$$
is a Hopf algebra over $\Z$ and the following hold:
\begin{enumerate}
 \item $\Z[G_\C,P]$ is a finitely generated integral domain;
 \item \label{unico} The coordinate ring $\C[G_\C]$ is isomorphic to the Hopf algebra $\Z[G_\C,P] \otimes_\Z \C$.
\end{enumerate}
In particular, we get a representable functor $\G_\Phi^P := \Hom_\Z(\Z[G_\C,P],-)$ from the category of commutative rings with $1$ to the category of groups. Since the Lie group $G_\C$ and the representation $\rho$ are determined, up to isomorphism, by the root system $\Phi$ and the lattice $P$, respectively, we see that $\G_\Phi^P$ in fact depends only on $\Phi$ and $P$ up to isomorphism. Moreover, by property \bref{unico} above we recover $G_\C \cong \G_\Phi^P(\C)$ as the group of $\C$-points of $\G_\Phi^P$. The functor $\G_\Phi^P$ also inherits some properties of $G_\C$, namely, it is semi-simple (in the sense of \cite{SGA3.3}) and contains a maximal torus of rank $\rk(\Phi)$ defined over $\Z$. Demazure's theorem \cite[Expos{\'e} XXIII, Cor. 5.4]{SGA3.3} ensures that $\G_\Phi^P$ is unique up to isomorphism. A detailed proof of existence is also given in \cite[Expos{\'e} XXV]{SGA3.3}. (See also Lusztig's more recent approach to Kostant's construction \cite{Lusztig}.) We summarize the discussion with the following.

\begin{def/thm*}[\cite{Chevalley, Ree, SGA3.3, Kostant}]
Given a reduced root system $\Phi$ and a lattice $P$ with $P_{ad} \subseteq P \subseteq P_{sc}$, the \emph{Chevalley--Demazure group scheme} of type $(\Phi, P)$ is the split, semi-simple, affine group scheme $\G_\Phi^P$ defined over $\Z$ such that, for any field $k$, the split, semi-simple, linear algebraic group of type $\Phi$ and defined over $k$ is isomorphic to $\G_\Phi^P \otimes_{\Z} k$. 
\end{def/thm*}

A \emph{Chevalley--Demazure group} is the group of $R$-points $\G_\Phi^P(R)$ of some Chevalley--Demazure group scheme $\G_\Phi^P$ for some (commutative) ring $R$ (with unity). Of course, the two extreme cases of $P$ deserve special names. If $P = P_{ad}$, the root lattice, then $\G_\Phi^P =: \G_\Phi^{ad}$ is said to be of \emph{adjoint type}. If $P = P_{sc}$, the full lattice of weights of $\Lie{g}$, then $\G_\Phi^P$ is of \emph{simply connected type}. If, moreover, $\Phi$ is irreducible, then $\G_\Phi^{P_{sc}}$ is called \emph{universal}, and we write $\uCD := \G_\Phi^{P_{sc}}$. Since presentability problems for $\G_\Phi^P (R)$ are often reduced to the universal case, we shall be primarily concerned with $\G_\Phi^{sc}$.

The group scheme $\G_\Phi^P$ has the following properties. Let $y$ be an independent variable. For each $\alpha \in \Phi$ we get a monomorphism of the additive group scheme $\Addi = \Hom(\Z[y],-)$ into $\Hom_\Z(\Z[G_\C,P],-) = \G_\Phi^P$. Fix a ring $R$. Given an element $r \in (R,+) \cong \Addi(R)$, we denote its image under the map above by $x_\alpha(r) \in \G_\Phi^P(R)$. The \emph{unipotent root subgroup} associated to $\alpha$ is defined as $\mf{X}_\alpha := \gera{x_\alpha(r) \tq r \in R} \leq \G_\Phi^P(R)$, which is isomorphic to $\Addi(R) = (R,+)$. Furthermore, the map $\Addi \into \G_\Phi^P$ can be chosen so that
\[
\begin{pmatrix}
1 & r \\
0 & 1
\end{pmatrix} \in \SL_2(R) \mapsto x_\alpha(r) \mbox{ and }
\begin{pmatrix}
1 & 0 \\
r & 1
\end{pmatrix} \in \SL_2(R) \mapsto x_{-\alpha}(r),
\]
so we obtain an isomorphism from the subgroup either of $\SL_2(R)$ or of ${\rm PGL}_2(R)$ generated by elementary matrices to $\gera{\mf{X}_\alpha, \mf{X}_{-\alpha}} \leq \G_\Phi^P(R)$, the former being the case if $\G_\Phi^P = \uCD$. We then define the \emph{elementary subgroup} $E_\Phi^P$ of $\G_\Phi^P$ to be its subgroup generated by all unipotent root elements, that is $E_\Phi^P = \gera{\mf{X}_\alpha \tqalt \alpha \in \Phi} \leq \G_\Phi^P(R)$. In the Chevalley--Demazure setting, it is the analogous of the subgroup of elementary transvections of $\SL_n$. The groups $E_\Phi^P(R)$ and $\G_\Phi^P(R)$ need not coincide in general, but they are known to be equal in some important cases---perhaps most prominently in the case where $\G_\Phi^P$ is universal and $R$ is a field.

The maps from $\Addi$ into the $\mf{X}_\alpha \leq \G_\Phi^P$ also induce, for each $\alpha \in \Phi$, an embedding of the multiplicative group
$$\Mult \cong \begin{pmatrix} * & 0 \\ 0 & *^{-1} \end{pmatrix} \into \G_\Phi^P.$$
For $u \in R^\times$ we denote by $h_\alpha(u)$ the image of the matrix
\[
\begin{pmatrix} u & 0 \\ 0 & u^{-1} \end{pmatrix}
\in \SL_2(R)
\] 
under the map above. We call $\mc{H}_\alpha := \gera{\set{h_\alpha(u) \tq u \in R^\times}} \leq \G_\Phi^P(R)$ a \emph{semi-simple root subgroup}, and it is clearly a subtorus of $\G_\Phi^P(R)$. The key feature of a universal group is that $\mc{H} := \gera{\mc{H}_\alpha \tq \alpha \in \Phi}$ is a maximal split torus of $\uCD(R)$.

Two root subgroups $\mf{X}_\alpha, \mf{X}_\beta$ with $\alpha \neq -\beta$ are related by the \emph{Chevalley commutator formula}. If $x_\alpha(r) \in \mf{X}_\alpha$ and $x_\beta(s) \in \mf{X}_\beta$, then
\begin{equation} \label{ChevalleyRel}
[x_\alpha(r),x_\beta(s)] =
\begin{cases} \displaystyle\underset{m\alpha + n\beta \in \Phi}{\prod_{m, n > 0}} x_{m\alpha + n\beta}(r^m s^n)^{C_{m,n}^{\alpha,\beta}} & \mbox{if } \alpha + \beta \in \Phi; \\
1 & \mbox{otherwise,} \end{cases}
\end{equation}
where the powers $C_{m,n}^{\alpha,\beta}$, called \emph{structure constants}, always belong to $\set{0,\pm 1, \pm 2, \pm 3}$ and do not depend on $r$ nor on $s$, but rather on $\alpha, \beta$ and on the chosen total order on the set of simple roots $\Delta \subset \Phi$.

Steinberg derives in \cite[Chapter 3]{Steinberg} a series of consequences of the commutator formulae, known as Steinberg relations. Among these, we highlight the ones that relate the subtori $\mc{H}_\beta$ to the root subgroups $\mf{X}_\alpha$. Given $h_\beta(u) \in \mc{H}_\beta$ and $x_\alpha(r) \in \mf{X}_\alpha$, the following conjugation relation holds.
\begin{equation} \label{SteinbergRel}
h_\beta(u) x_\alpha(r) h_\beta(u)^{-1} = x_\alpha(u^{(\alpha, \beta)}r),
\end{equation}
where $(\alpha,\beta) \in \set{0, \pm 1, \pm 2, \pm 3}$ is the corresponding Cartan integer from Chevalley's Theorem.

Let $W$ be the Weyl group associated to $\Phi$. The conjugation relations above behave well with respect to both the action of $W$ on the roots and the conjugation action of (the ``canonical'' image of) $W$ on semi-simple and unipotent root elements. More precisely, let $\alpha$ be a root and $r_\alpha \in W$ be the associated reflection, and consider the element $w_\alpha \in E_\Phi^{P}(R)$ given by
\[
 w_\alpha := x_\alpha(1) x_{-\alpha}(1)^{-1} x_\alpha (1) = \mbox{ image of the matrix }
 \begin{pmatrix}
  0 & 1 \\
  -1 & 0
 \end{pmatrix}
 \in \SL_2(R) 
\]
under the map $E_2(R) \to \gera{\mf{X}_\alpha, \mf{X}_{-\alpha}}$ above. Then, given arbitrary roots $\beta, \gamma \in \Phi$, one has
\begin{align} \label{SteinbergReflection}
 \begin{split}
 h_{r_\alpha(\gamma)}(v) x_{r_\alpha(\beta)}(s) h_{r_\alpha(\gamma)}(v)^{-1} & = w_\alpha (h_\gamma(v) x_\beta(s)^{\pm 1} h_\gamma(v)^{-1}) w_\alpha^{-1} \\
 & = x_{r_\alpha(\beta)}(v^{(\beta, \gamma)}s)^{\pm 1},
 \end{split}
 \end{align}
where the sign $\pm 1$ above does not depend on $v \in R^\times$ nor on $s \in R$.

\subsection{Parabolic subgroups} \label{parabolicos} In the structure theory of Chevalley--Demazure and algebraic groups, their parabolic subgroups play an important role \cite{TitsBN, SGA3.3, MaTe}. They allow for different characterizations depending on the base ring \cite{SuzukiParabs, VavilovParabs, BorelLAG}. When defined over fields, a subgroup $\P \leq \uCD$ is called parabolic whenever the variety $\uCD / \P$ is complete. In the theory of buildings, standard parabolic subgroups arise as stabilizers of panels of a fixed fundamental chamber and are intimately related to parabolic subgroups of the corresponding Weyl group. Equivalent to those definitions over fields, $\P$ is said to be parabolic if it contains a Borel subgroup, which is a maximal, connected, soluble algebraic subgroup of $\uCD$. They always contain a maximal split torus. In this work, we consider those ``classical'' parabolics, most easily described via sets of simple roots.

\begin{dfn} \label{DefParab}
A \emph{standard} parabolic subgroup $\P(R) = \P_I(R)$ of a universal elementary Chevalley--Demazure group $\ueCD(R)$ is a group of the form
\[
\P_I(R) = \gera{\mc{H}, \mf{X}_\delta \tqalt \delta \in \Phi^+ \cup \Phi_I}
\]
for some subset of simple roots $I \subseteq \Delta$. The parabolic groups $\ueCD(R) = \P_\Delta(R)$ and $\mcB_\Phi(R) = \P_\vazio(R)$ are called \emph{trivial}, the latter also being known as a standard Borel subgroup of $\ueCD(R)$.
\end{dfn}

It is well-known that $\P_I(R)$ admits a \emph{Levi decomposition} \cite[Exp. XXVI, Prop. 1.6]{SGA3.3}, that is, it splits as a semi-direct product $\P_I(R) = \U_I(R) \rtimes \L_I(R)$ with
\begin{align*}
\U_I(R) = \gera{\mf{X}_\gamma \tqalt \gamma \in \Phi^+ \backslash \Phi_I},\\
\L_I(R) = \gera{\mc{H}, \mf{X}_\alpha \tqalt \alpha \in \Phi_I}.
\end{align*}
The normal subgroup $\U_I(R)$ is called the \emph{unipotent radical} of $\P_I(R)$---it is always nilpotent and admits a filtration via levels of roots with respect to the defining subset $I \subseteq \Delta$ (see, for instance, \cite{MaTe} for the case of algebraically closed fields or \cite[Exp. XXVI, Sec. 2]{SGA3.3} for the general case). The group $\L_I(R)$ is called the \emph{Levi factor} of $\P_I(R)$. When defined over a field, $\L_I$ is a reductive algebraic group and its derived subgroup will be simply-connected because $\uCD$ is universal. A good example to keep in mind is that of the parabolic subgroups in type $A_n$.

\begin{exm} \label{elementar}
Suppose $R$ is a field and $\rk(\Phi) = n - 1 \geq 2$. Then $E_{A_{n-1}}^{sc}(R) = \SL_n(R)$ with a set of simple roots given by $\Delta = \set{\alpha_1,\ldots,\alpha_{n-1}}$ for $\alpha_i = e_i - e_{i+1},~ 1 \leq i \leq n-1$, where $\set{e_j}_{j=1}^n \subseteq \R^n$ is the canonical basis.\\
If $i \neq j$, we denote by $\Eij{ij}(r)$ the matrix of $\SL_n(R)$ whose only non-zero entry is $r \in R$ in the position $ij$. The corresponding \emph{elementary matrix} is defined as $\eij{ij}(r) := \mbf{I}_n + \Eij{ij}(r)$. Commutators between elementary matrices have the following properties that are easily checked:
\[
[\eij{ij}(r),\eij{kl}(s)^{-1}] = [\eij{ij}(r),\eij{kl}(s)]^{-1} \mbox{ and}
\]
\[
[\eij{ij}(r),\eij{kl}(s)] =
\begin{cases}
\eij{il}(rs) & \mbox{if } k=l;\\
1 & \mbox{if } i \neq l, k \neq j.
\end{cases}
\]
Via the usual identification $\eij{i,i+1}(r) \longleftrightarrow x_{\alpha_i}(r)$ of elementary matrices with unipotent root elements and making use of the above, we iteratively recover all unipotent root subgroups as well as the commutator formulae \bref{ChevalleyRel} in type $A_{n-1}$. For instance, we can see that $x_{\alpha_i + \alpha_{i+1}}(r)$  $ = [\eij{i,{i+1}}(r),\eij{i+1,i+2}(1)]$, and the commutator formulae assume the simpler form 
\[
[x_\alpha(r),x_\beta(s)] =
\begin{cases} x_{\alpha + \beta}(r s) & \mbox{if } \alpha + \beta \in \Phi; \\
1 & \mbox{otherwise}. \end{cases}
\]
We thus obtain that the standard parabolic subgroups of $\SL_n(R)$ are of the form
 \[
\P_I(R) =
 \begin{pmatrix}
 L_1 & * & * & \cdots & * \\
 \mbf{0} & L_2 & * & \cdots & \vdots \\
 \vdots & ~ & \ddots & ~ & * \\
 \mbf{0} & \cdots & \mbf{0} & ~ & L_k
 \end{pmatrix}
 \leq \SL_n(R),
 \]
 i.e. $\P_I(R)$ is a subgroup of ``block upper triangular matrices''. The letters $L_i$ together represent possible matrix entries (not necessarily subgroups!) that will constitute the ``generalized block diagonal'' of the given parabolic subgroup, and the Levi factor $\L_I(R)$ is the subgroup generated by the diagonal matrices of $\SL_n(R)$ and the $L_i$ blocks. The condition $I \subsetneq \Delta$ would imply that the number of blocks, $k$, is at least $2$. If $I = \vazio$, all the blocks have size 1 and so $\P_\vazio(R)$ is the group of upper triangular matrices of $\SL_n(R)$. If $I \neq \vazio$, then at least one block $L_i$ is a square of size $\geq 2$, so it consists of invertible square matrices with determinant 1. In other words, it is isomorphic to some $\SL_{n_i}$ with $n_i < n$.\\
 For instance, suppose $n = 6$ and $I = \set{\alpha_1, \alpha_3}$. Then $\P_I(R) = \P_{\set{\alpha_1, \alpha_3}}(R) \leq \SL_6(R)$ is given by
 \[
 \P_{\set{\alpha_1,\alpha_3}}(R) = 
  \begin{pmatrix}
   * & * & * & * & * & * \\
   * & * & * & * & * & * \\
   0 & 0 & * & * & * & * \\
   0 & 0 & * & * & * & * \\
   0 & 0 & 0 & 0 & * & * \\
   0 & 0 & 0 & 0 & 0 & * 
  \end{pmatrix}.
 \]
 Its unipotent radical and Levi factor are given, respectively, by
 \[
  \U_{\set{\alpha_1, \alpha_3}}(R) =
    \begin{pmatrix}
   1 & 0 & * & * & * & * \\
   0 & 1 & * & * & * & * \\
   0 & 0 & 1 & 0 & * & * \\
   0 & 0 & 0 & 1 & * & * \\
   0 & 0 & 0 & 0 & 1 & * \\
   0 & 0 & 0 & 0 & 0 & 1 
  \end{pmatrix},~
  ~ \L_{\set{\alpha_1, \alpha_3}}(R) =
    \begin{pmatrix}
   * & * & \multicolumn{1}{|c}{0} & 0 & 0 & 0 \\
   * & * & \multicolumn{1}{|c}{0} & 0 & 0 & 0 \\ \cline{1-4}
   0 & 0 & \multicolumn{1}{|c}{*} & * & \multicolumn{1}{|c}{0} & 0 \\
   0 & 0 & \multicolumn{1}{|c}{*} & * & \multicolumn{1}{|c}{0} & 0 \\ \cline{3-5}
   0 & 0 & 0 & 0 & \multicolumn{1}{|c}{*} & \multicolumn{1}{|c}{0} \\ \cline{5-6}
   0 & 0 & 0 & 0 & 0 & \multicolumn{1}{|c}{*} 
  \end{pmatrix}.
 \]
We observe that the Levi factor $\L_{\set{\alpha_1, \alpha_3}}(R)$ is generated by the following subgroups of $\SL_6(R)$.
\begin{align*}
 L_1 =
    \begin{pmatrix}
   * & * & 0 & 0 & 0 & 0 \\
   * & * & 0 & 0 & 0 & 0 \\
   0 & 0 & 1 & 0 & 0 & 0 \\
   0 & 0 & 0 & 1 & 0 & 0 \\
   0 & 0 & 0 & 0 & 1 & 0 \\
   0 & 0 & 0 & 0 & 0 & 1 
  \end{pmatrix},~ &
  ~ H_2 =
   \begin{pmatrix}
   1 & 0 & 0 & 0 & 0 & 0 \\
   0 & \diamond & 0 & 0 & 0 & 0 \\
   0 & 0 & \diamond^{-1} & 0 & 0 & 0 \\
   0 & 0 & 0 & 1 & 0 & 0 \\
   0 & 0 & 0 & 0 & 1 & 0 \\
   0 & 0 & 0 & 0 & 0 & 1 
  \end{pmatrix}, \\
  L_3 =
    \begin{pmatrix}
   1 & 0 & 0 & 0 & 0 & 0 \\
   0 & 1 & 0 & 0 & 0 & 0 \\
   0 & 0 & * & * & 0 & 0 \\
   0 & 0 & * & * & 0 & 0 \\
   0 & 0 & 0 & 0 & 1 & 0 \\
   0 & 0 & 0 & 0 & 0 & 1 
  \end{pmatrix},~ &
  ~ H_{4,5} =
    \begin{pmatrix}
   1 & 0 & 0 & 0 & 0 & 0 \\
   0 & 1 & 0 & 0 & 0 & 0 \\
   0 & 0 & 1 & 0 & 0 & 0 \\
   0 & 0 & 0 & * & 0 & 0 \\
   0 & 0 & 0 & 0 & * & 0 \\
   0 & 0 & 0 & 0 & 0 & * 
  \end{pmatrix}.
\end{align*}
In this case (recall that $R$ is a field!), we have the following identifications using the notation from Section \ref{jargao}.
\begin{align*}
 L_1 = \gera{\mf{X}_{\alpha_1}, \mf{X}_{-\alpha_1}} \cong \SL_2(R),~ & ~ H_2 = \H_{\alpha_2} \cong \Mult(R) \\
 L_3 = \gera{\mf{X}_{\alpha_3}, \mf{X}_{-\alpha_3}} \cong \SL_2(R),~ & ~ H_{4,5} = \gera{\H_{\alpha_4}, \H_{\alpha_5}} \cong \Mult(R)^2
\end{align*}
and
\begin{align*}
\U_{\set{\alpha_1,\alpha_3}}(R) = \langle & \mf{X}_{\alpha_2},~ \mf{X}_{\alpha_4},~ \mf{X}_{\alpha_5},~ \mf{X}_{\alpha_1 + \alpha_2},~ \mf{X}_{\alpha_2 + \alpha_3},~ \mf{X}_{\alpha_3 + \alpha_4},~ \mf{X}_{\alpha_4 + \alpha_5}, \\
 & \mf{X}_{\alpha_1 + \alpha_2 + \alpha_3},~ \mf{X}_{\alpha_2 + \alpha_3 + \alpha_4},~ \mf{X}_{\alpha_3 + \alpha_4 + \alpha_5}, \\
 & \mf{X}_{\alpha_1 + \alpha_2 + \alpha_3 + \alpha_4},~ \mf{X}_{\alpha_2 + \alpha_3 + \alpha_4 + \alpha_5}, \\ 
 & \mf{X}_{\alpha_1 + \alpha_2 + \alpha_3 + \alpha_4 + \alpha_5}\rangle. 
\end{align*}
\end{exm}

$~$

The decomposition for the Levi factor $\L_{\set{\alpha_1, \alpha_3}}(R)$ from the example above holds, of course, in a more general context. Recall that the root system $\Phi$ has an arbitrary, but fixed, choice of (totally ordered) simple roots $\Delta$. Given a subset $X \subseteq \Phi$, we let $\Phi_X \subseteq \Phi$ denote the subsystem generated by $X$. Viewing $\Delta$ as the set of vertices of its Dynkin diagram $\mc{D}_\Delta$, if $I$ is a subset of simple roots, we write $\Adj(I)$ for the set of simple roots \emph{not in} $I$ that are adjacent to some (not necessarily the same) element of $I$. In symbols,
\begin{align*}
 \Adj(I) := \{ \delta \in \Delta \backslash I \tq & \exists \alpha \in I \mbox{ for which there is an edge in } \mc{D}_\Delta \\
& \mbox{connecting } \delta \mbox{ and } \alpha \}.
\end{align*}
Now, let $\vazio \neq I \subsetneq \Delta$. This subset $I$ of simple roots generates a subdiagram $\mc{I}$ in the Dynkin diagram $\mc{D}_\Delta$. Denote by $I_1, \ldots, I_k$ the (pairwise disjoint) subsets of $I$ that span the connected components of $\mc{I}$ in $\mc{D}_\Delta$. We observe that
\[
 \Phi_I = \Phi_{I_1} \cupdot \Phi_{I_2} \cupdot \cdots \cupdot \Phi_{I_k}.
\]
 It then follows from Chevalley's formula \bref{ChevalleyRel} and Steinberg's relations \bref{SteinbergRel} that the Levi factor $\L_I(R) \leq \P_I(R) \leq \ueCD(R)$ is in fact an extension of a direct product of elementary Chevalley--Demazure groups, of rank smaller than $\rk(\Phi)$, by a torus. Namely,
\[
 \L_I(R) = E_{\Phi_I}^{sc}(R) \rtimes \gera{\H_{\alpha} \tqalt \alpha \in \Delta \backslash I} = \left( \prod_{j = 1}^k E_{\Phi_{I_j}}^{sc}(R) \right) \rtimes \gera{\H_{\alpha} \tqalt \alpha \in \Delta \backslash I}.
\]
If $ I = \vazio$, then $\L_I(R)$ coincides with the standard torus $\H = \gera{\H_\alpha \tqalt \alpha \in \Delta}$.

\subsection{Terminology} \label{notacao} For convenience, we collect below the notation to be used throughout the remainder of this paper.

If $X$ is a subset of a group $G$, we denote by $\gera{X}$ the subgroup of $G$ generated by $X$. Similarly, given a set $X$ and a set $R$ of (reduced) words on the alphabet over $X$, we denote by $\gera{X \tq R}$ the group with generating set $X$ and defining relators $R$. That is, $\gera{X \tq R} \cong F_X / \langle \langle R \rangle \rangle$, where $F_X$ is the free group with basis $X$ and $\langle \langle R \rangle \rangle$ is the normal closure of $R \subseteq F_X$. Standard results on group presentations, in particular von Dyck's theorems, will be used freely without further references (see e.g. \cite{Cohen, Johnson}).

\begin{itemize}
 \item $R$ always denotes a commutative ring with $1$;
 \item $\Addi$ and $\Mult$ denote, respectively, the additive and the multiplicative affine group schemes over $\Z$ (see e.g. \cite{Waterhouse}). So, given a ring $R$, we have $\Addi(R) = (R,+)$ and $\Mult(R) = (R^\times, \cdot)$;
 \item The inner product of a Euclidean space is denoted by $\gera{\cdot, \cdot}$;
 \item $\Phi$ is a reduced, irreducible root system with an arbitrary, but fixed, choice of subset $\Delta \subseteq \Phi$ of simple roots with a total order compatible with heights of roots (see e.g. \cite[13.11]{MaTe}) that extends to the whole $\Phi$;
 \item Using the order above, given a subset $X\subseteq \Phi$ we write $X^+$ (respectively, $X^{-}$) for the subset of positive (respectively, negative) roots of $X$;
 \item If $X \subseteq \Phi$, then $\Phi_X \subseteq \Phi$ denotes the root subsystem of $\Phi$ generated by $X$, i.e. $\Phi_X = \vspan_\Z(X) \cap \Phi \subseteq \R^{\rk(\Phi)}$;
 \item Given two roots $\alpha, \beta \in \Phi \subset \R^{\rk(\Phi)}$, the Cartan integer $(\alpha, \beta)$ is given by $(\alpha, \beta) = 2\frac{\gera{\alpha,\beta}}{\gera{\beta,\beta}}$;
 \item $\uCD$ denotes the universal Chevalley--Demazure group scheme over $\Z$ with elementary subgroup $\ueCD$;
 \item For $\alpha \in \Phi$, the unipotent root subgroup $\mf{X}_\alpha \leq \ueCD(R)$ is the subgroup of $\uCD(R)$ generated by the unipotent root elements $x_\alpha(r), r \in R$, so $\ueCD(R) = \gera{\mf{X}_\alpha \tqalt \alpha \in \Phi}$;
 \item The semi-simple root subgroup $\mc{H}_\alpha \leq \ueCD(R)$ is the subgroup of $\uCD(R)$ generated by the semi-simple root elements $h_\alpha(u), u \in R^\times$. They define the standard torus $\mc{H} = \gera{\mc{H}_\alpha \tq \alpha \in \Phi}$ in $\ueCD(R)$, which has maximal rank;
 \item $I$ always denotes denotes a subset of the simple roots $\Delta$;
 \item Viewing $I \subseteq \Delta$ as a vertex subset of the Dynkin diagram of $\Phi$, we let $\Adj(I) \subseteq \Delta \backslash I$ be the set of simple roots $\alpha \notin I$ which are \emph{adjacent} to some (not necessarily the same) element of $I$. We define further the \emph{non-adjacent roots} as the complement $\nonAdj(I) = \Delta \backslash (I \cup \Adj(I))$ and the \emph{extension of} $I$ to be $\Ext(I) := \Delta \backslash \Adj(I) = I \cup \nonAdj(I)$;
 \item $\P_I \leq \ueCD$ is the corresponding standard parabolic subgroup of $\ueCD$ (Def. \ref{DefParab}). It always contains the torus $\mc{H} \leq \ueCD$ of rank $\rk(\Phi)$;
 \item $\P_I = \U_I \rtimes \L_I$ is the Levi decomposition of $\P_I$, i.e. $\U_I$ is its unipotent radical and $\L_I$ denotes its Levi subgroup;
 \item The minimal parabolic $\P_\vazio(R)$ is also denoted by $\B_\Phi(R)$, called the standard Borel subgroup of $\ueCD(R)$. The standard Borel subgroup in type $A_1$ is denoted by $\Bzero := \B_{A_1}(R) = \P_\vazio(R) \leq \SL_2(R)$.
\end{itemize}

\section{Retracts and the extended Levi factor} \label{Levi}

\noindent

An object $Y$ in a category $\mc{C}$ is called a retract of $X \in \Obj(\mc{C})$ if there exists a morphism $r \in \Hom_{\mc{C}}(X,Y)$ which admits a morphism $\iota \in \Hom_{\mc{C}}(Y,X)$ as a section. For pointed, path-connected, topological spaces, the existence of a retraction $r : X \to Y$ implies, intuitively, that the fundamental group of $Y$ cannot be ``worse'' than the fundamental group of $X$. For categories of algebraic objects such as $R$-modules or groups, retractions correspond precisely to split short exact sequences. So for groups, a retraction $r : G \to Q$ is just another name for the semi-direct product $G \cong N \rtimes Q$, where $N = \ker(r)$ and $Q$ acts on $N$ via conjugation.

H. {\AA}berg observed in the 80's that a retract $Q$ of a group $G$ inherits certain homological properties of $G$. For generators and relations, a similar inheritance has been long known: the image of a finitely generated group is obviously finitely generated, and if $G$ is finitely presented, then so is $Q$ by the following result due to J. Stallings.

\begin{lem}[{\cite[Lemma 1.3]{Wall}}] \label{Stallings}
 If $Q$ is a retract of a finitely presented group, then $Q$ is finitely presented.
\end{lem}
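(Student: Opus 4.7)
The plan is to build a finite presentation of $Q$ directly from a finite presentation $\langle X \mid R \rangle$ of the ambient group $G$. Since $r : G \twoheadrightarrow Q$ is surjective, $Q \cong G/\ker(r)$, so the whole problem reduces to showing that $\ker(r)$ is finitely generated as a \emph{normal} subgroup of $G$. Once that is established, $Q$ inherits a finite presentation simply by augmenting $R$ with the finitely many extra relators witnessing a normal generating set of $\ker(r)$.

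The candidate normal generating set I would use is the finite subset
\[
T := \bigl\{ x \cdot \iota(r(x))^{-1} \mathbin{\big|} x \in X \bigr\} \subseteq \ker(r).
\]
The key step is to verify that $T$ already normally generates all of $\ker(r)$. For this, I would let $K \trianglelefteq G$ be the normal closure of $T$ and prove by induction on word length in the alphabet $X^{\pm 1}$ that $s \cdot \iota(r(s))^{-1} \in K$ for every $s \in G$. The inductive step rests on the identity
\[
xs \cdot \iota(r(xs))^{-1} = x \bigl(s \cdot \iota(r(s))^{-1}\bigr) x^{-1} \cdot \bigl(x \cdot \iota(r(x))^{-1}\bigr)
\]
together with an analogous manipulation for inverses of generators; in each case both factors on the right lie in $K$ by the inductive hypothesis together with normality. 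Since every element of $\ker(r)$ has the form $s \cdot \iota(r(s))^{-1}$ (take $s$ to be the element itself, so that $r(s) = 1$), this gives $\ker(r) \subseteq K$, and the reverse inclusion is immediate from $T \subseteq \ker(r)$.

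To assemble the presentation I would finally express each $\iota(r(x)) \in G$ as a word $w_x$ in $X^{\pm 1}$, which is possible because $G$ is generated by the finite set $X$, and conclude
\[
Q \;\cong\; \bigl\langle X \mathbin{\big|} R \cup \{ x \cdot w_x^{-1} : x \in X \} \bigr\rangle.
\]
There is no real obstacle here: the closure computation is routine normal-subgroup bookkeeping, and the construction of the $w_x$'s uses nothing beyond the fact that $X$ generates $G$. The conceptual content, which is what makes the argument run, is simply that a retraction provides enough data to rewrite each generator of $G$ modulo $\ker(r)$ as an element of the image of $\iota$, so that the normal generating set of $\ker(r)$ can be kept finite.
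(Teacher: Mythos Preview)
Your argument is correct and is essentially the standard proof of this fact. Note, however, that the paper does not actually supply a proof of this lemma: it merely quotes the result with a reference to Wall, so there is no ``paper's own proof'' to compare against. Your write-up would serve perfectly well as a self-contained justification should one be desired.
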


So the existence of ``nice'' (e.g. finite or compact) generating sets or presentations of $Q$ is a necessary condition to obtain ``nice'' generating sets or presentations of $G$. In particular, Lemma \ref{Stallings} implies that the Levi factor $\L_I(R)$ is always finitely presented whenever the whole parabolic $\P_I(R)$ is so, and we obtain from the description of $\L_I(R)$ given at the end of Section \ref{parabolicos} that, in our set-up, the torus $\H$ (and whence $\Mult(R) = R^\times$) is finitely generated. 

Going the other direction, however, is in general a delicate problem. For the parabolic $\P_I(R) = \U_I(R) \rtimes \L_I(R)$, the example in Section \ref{exemplao} shows that $\L_I(R)$ might fail to detect qualitative properties of $\P_I(R)$ such as finite or compact presentability. To remedy the situation, we instead consider a slightly larger retract of $\P_I(R)$.

Recall that $\Adj(I) \subseteq \Delta \backslash I$ is the set of simple roots $\alpha \notin I$ adjacent to some (not necessarily the same) element of $I$. The \emph{non-adjacent} roots are the complement 
\[
\nonAdj(I) = \Delta \backslash (I \cup \Adj(I)) = \set{\alpha \in \Delta \backslash I \tq \alpha \mbox{ is adjacent to no element of } I}, 
\]
and the \emph{extension} of the given set $I$ is defined as $\Ext(I) = \Delta \backslash \Adj(I) = I \cup \nonAdj(I)$.

\begin{dfn} \label{defLE}
An \emph{extended Levi factor}, denoted $\LE_n(R)$ or $\LE_I(R)$, is a subgroup of a standard parabolic $\P_I(R) \leq \ueCD(R)$ which is generated by the standard torus $\H$ and either by a single root subgroup $\mf{X}_\alpha$ with $\alpha \in \Delta$, in case $I$ is empty, or by the root subgroups $\mf{X}_{\alpha}$ for $\alpha \in \Phi_I$ together with the non-adjacent positive root subgroups $\mf{X}_{\beta}$ with $\beta \in \Phi_{\nonAdj(I)}^+$, if $I \neq \vazio$. In symbols,
\[
\begin{cases}
 \LE_n(R) := \gera{\H,~ \mf{X}_{\alpha_n} \tqalt \alpha_n \mbox{ is the } n\mbox{-th root of } \Delta}, & \mbox{ if } I = \vazio;\\
 \LE_I(R) := \gera{\H,~ \mf{X}_\alpha,~ \mf{X}_\beta \tqalt \alpha \in \Phi_I,~ \beta \in \Phi_{\nonAdj(I)}^+} & \mbox{otherwise}.
\end{cases}
\]
\end{dfn}

We reserve the notation $\LE_I(R)$ for the case $I \neq \vazio$. Of course, by the very definition there is a unique extended Levi factor when $I \neq \vazio$ and there exist $\rk(\Phi)$ extended Levi factors if $I = \vazio$.  Since $\L_I(R) = \gera{\H,~ \mf{X}_\alpha \tqalt \alpha \in \Phi_I}$ and $\L_\vazio(R) = \H$, one has $\LE_I(R) \supseteq \L_I(R)$ and $\LE_n(R) \supsetneq \L_\vazio(R)$, and we see that the definition given above coincides with the one given in the introduction. Furthermore, we get the following \emph{split} short exact sequences.
\[
 \mf{X}_{\alpha_n} \into \LE_n(R) \onto \L_\vazio(R) = \H \cong \LE_n(R)/ \mf{X}_{\alpha_n}
\]
and
\[
 \gera{\mf{X}_\beta \tqalt \beta \in \Phi_{\nonAdj(I)}^+} \into \LE_I(R) \onto \L_I(R) \cong \LE_I(R) / \gera{\mf{X}_\beta \tqalt \beta \in \Phi_{\nonAdj(I)}^+}.
\]
By Lemma \ref{Stallings}, if an extended Levi factor is finitely presented, then so is the Levi factor itself.

In the language of Sections \ref{exemplao} and \ref{main}, for the case $I \neq \vazio$, the root subgroups $\mf{X}_\beta$ with $\beta \in \nonAdj(I)$ are the generators of the \textcolor{red}{triangular blocks} of $\P_I(R)$, whereas $\L_I(R)$ is generated by both the \textcolor{blue}{regular blocks}---generated by the $\mf{X}_\alpha$ with $\alpha \in \Phi_I$---and the torus $\H$.

\begin{exm}
We describe the extended Levi factor for the parabolic subgroup
\[
 \P_{\set{\alpha_1,\alpha_3}}(R) = 
  \begin{pmatrix}
   * & * & * & * & * & * \\
   * & * & * & * & * & * \\
   0 & 0 & * & * & * & * \\
   0 & 0 & * & * & * & * \\
   0 & 0 & 0 & 0 & * & * \\
   0 & 0 & 0 & 0 & 0 & * 
  \end{pmatrix}
  \leq \SL_6(R)
\]
 from Example \ref{elementar} ($R$ a field). As observed before, we have two subgroups which are generated by root subgroups with roots from $\Phi_{\set{\alpha_1,\alpha_3}}$, namely $L_1 = \gera{\mf{X}_{\alpha_1},~ \mf{X}_{-\alpha_1}}$ and $L_3 = \gera{\mf{X}_{\alpha_3},~ \mf{X}_{-\alpha_3}}$. Those are precisely the \textcolor{blue}{regular blocks} from $\P_{\set{\alpha_1, \alpha_3}}(R)$. Now, the only simple root which is not adjacent to $I = \set{\alpha_1, \alpha_3}$ is the last one, $\alpha_5$. So $\nonAdj(I)$ is the singleton $\set{\alpha_5}$ and one has $\Phi_{\nonAdj(I)} = \set{\pm \alpha_5}$ and $\Phi_{\nonAdj(I)}^+ = \set{\alpha_5}$, whence the \textcolor{red}{triangular block} of $\P_{\set{\alpha_1, \alpha_3}}(R)$ is just the root subgroup $\mf{X}_{\alpha_5}$. Pictorially,
\[
 \textcolor{blue}{L_1} =
    \begin{pmatrix}
   \textcolor{blue}{*} & \textcolor{blue}{*} & 0 & 0 & 0 & 0 \\
   \textcolor{blue}{*} & \textcolor{blue}{*} & 0 & 0 & 0 & 0 \\
   0 & 0 & 1 & 0 & 0 & 0 \\
   0 & 0 & 0 & 1 & 0 & 0 \\
   0 & 0 & 0 & 0 & 1 & 0 \\
   0 & 0 & 0 & 0 & 0 & 1 
  \end{pmatrix},~
  ~\textcolor{blue}{L_3} =
    \begin{pmatrix}
   1 & 0 & 0 & 0 & 0 & 0 \\
   0 & 1 & 0 & 0 & 0 & 0 \\
   0 & 0 & \textcolor{blue}{*} & \textcolor{blue}{*} & 0 & 0 \\
   0 & 0 & \textcolor{blue}{*} & \textcolor{blue}{*} & 0 & 0 \\
   0 & 0 & 0 & 0 & 1 & 0 \\
   0 & 0 & 0 & 0 & 0 & 1 
  \end{pmatrix} \mbox{ and}
  \]
  \[
   \textcolor{red}{\mf{X}_{\alpha_5}} =
    \begin{pmatrix}
   1 & 0 & 0 & 0 & 0 & 0 \\
   0 & 1 & 0 & 0 & 0 & 0 \\
   0 & 0 & 1 & 0 & 0 & 0 \\
   0 & 0 & 0 & 1 & 0 & 0 \\
   0 & 0 & 0 & 0 & 1 & \textcolor{red}{*} \\
   0 & 0 & 0 & 0 & 0 & 1 
  \end{pmatrix},
  \mbox{ so } \LE_{\set{\alpha_1, \alpha_3}}(R) =
    \begin{pmatrix}
   \textcolor{blue}{*} & \textcolor{blue}{*} & \multicolumn{1}{|c}{0} & 0 & 0 & 0 \\
   \textcolor{blue}{*} & \textcolor{blue}{*} & \multicolumn{1}{|c}{0} & 0 & 0 & 0 \\ \cline{1-4}
   0 & 0 & \multicolumn{1}{|c}{\textcolor{blue}{*}} & \textcolor{blue}{*} & \multicolumn{1}{|c}{0} & 0 \\
   0 & 0 & \multicolumn{1}{|c}{\textcolor{blue}{*}} & \textcolor{blue}{*} & \multicolumn{1}{|c}{0} & 0 \\ \cline{3-6}
   0 & 0 & 0 & 0 & \multicolumn{1}{|c}{*} & \textcolor{red}{*} \\
   0 & 0 & 0 & 0 & \multicolumn{1}{|c}{\textcolor{red}{0}} & * 
  \end{pmatrix}
  \]
  because, besides containing the \textcolor{blue}{regular} and \textcolor{red}{triangular} blocks, an extended Levi factor also contains the torus---which in this case corresponds to the subgroup of diagonal matrices of $\SL_6(R)$.
\end{exm}

$~$

Of course, we still have to show that $\LE_I(R)$ fits our framework of retracts with respect to $\P_I(R)$. 

\begin{pps} \label{LEretrato}
 Suppose $I \neq \vazio$. There is a retract $r: \P_I(R) \onto \LE_I(R)$ with kernel $\mc{K}_I(R) = \gera{\mf{X}_\gamma \tqalt \gamma \in \Phi^+ \backslash \Phi_{\Ext(I)}}$. If $I = \vazio$, then one has a retract $r : \P_\vazio(R) = \B_\Phi(R) \onto \LE_n(R)$ with kernel $\mc{K}_n(R) = \gera{\mf{X}_\gamma \tqalt \gamma \in \Phi^+ \backslash \set{\alpha_n}}$.
\end{pps}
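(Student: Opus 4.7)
The plan is to construct $r$ by exhibiting $\P_I(R)$ as an internal semi-direct product $\mc{K}_I(R) \rtimes \LE_I(R)$ (resp.\ $\B_\Phi(R) = \mc{K}_n(R) \rtimes \LE_n(R)$ when $I = \vazio$), and then to take $r$ to be the projection onto the second factor. This reduces the statement to three claims: normality of $\mc{K}_I(R)$ in $\P_I(R)$, the factorization $\P_I(R) = \mc{K}_I(R) \cdot \LE_I(R)$, and triviality of the intersection $\mc{K}_I(R) \cap \LE_I(R)$. The combinatorial backbone of everything below is the elementary observation that, since no root of $I$ is adjacent to a root of $\nonAdj(I)$ in the Dynkin diagram, the root subsystem $\Phi_{\Ext(I)}$ splits as a disjoint union $\Phi_I \cupdot \Phi_{\nonAdj(I)}$.

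For normality of $\mc{K}_I(R)$, I would argue as follows. A positive root $\gamma \in \Phi^+ \backslash \Phi_{\Ext(I)}$ must have some \emph{strictly positive} coefficient on a simple root in $\Adj(I)$ when written in the basis $\Delta$. Take any generator $x_\delta(r)$ of $\P_I(R)$ with $\delta \in \Phi^+ \cup \Phi_I$: either $\delta$ is a positive root (nonnegative coefficients on every simple root, including on $\Adj(I)$), or $\delta \in \Phi_I$ (zero coefficient on every root of $\Adj(I)$). Applying the Chevalley commutator formula \bref{ChevalleyRel}, every root $m\delta + n\gamma$ arising in $[x_\delta(r), x_\gamma(s)]$ inherits from $n\gamma$ a strictly positive $\Adj(I)$-coefficient, so it is a positive root lying in $\Phi^+ \backslash \Phi_{\Ext(I)}$. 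Hence the commutator lies in $\mc{K}_I(R)$. Conjugation by the torus $\mc{H}$ is handled by Steinberg's relation \bref{SteinbergRel}, which preserves each $\mf{X}_\gamma$ setwise. In the case $I = \vazio$, the same argument simplifies: for $\delta \in \Phi^+$ and $\gamma \in \Phi^+ \backslash \{\alpha_n\}$, the root $m\delta + n\gamma$ has height at least $2$ and therefore cannot equal the simple root $\alpha_n$.

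For the factorization, I would set $M_I(R) := \gera{\mf{X}_\beta \tqalt \beta \in \Phi_{\nonAdj(I)}^+}$. The disjoint decomposition
\[
\Phi^+ \backslash \Phi_I = (\Phi^+ \backslash \Phi_{\Ext(I)}) \cupdot \Phi_{\nonAdj(I)}^+,
\]
together with normality of $\mc{K}_I(R) \leq \U_I(R)$, yields $\U_I(R) = \mc{K}_I(R) \cdot M_I(R)$; combined with the Levi decomposition of Section \ref{parabolicos} this gives $\P_I(R) = \mc{K}_I(R) \cdot \LE_I(R)$. To identify $\LE_I(R)$ as an internal semi-direct product $M_I(R) \rtimes \L_I(R)$ I would use the disconnection property again: for $\alpha \in \Phi_I$ and $\beta \in \Phi_{\nonAdj(I)}^+$ the sum $\alpha + \beta$ is \emph{not} a root, so $[\mf{X}_\alpha, \mf{X}_\beta] = 1$ by \bref{ChevalleyRel}, while Steinberg's relation \bref{SteinbergRel} shows $\mc{H}$ normalizes $M_I(R)$. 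In particular $\U_I(R) \cap \LE_I(R) = M_I(R)$, so the remaining intersection $\mc{K}_I(R) \cap \LE_I(R)$ reduces to $\mc{K}_I(R) \cap M_I(R)$, which is trivial by the uniqueness of normal forms in the unipotent radical $\U_I(R)$ (the two subgroups are generated by root subgroups attached to \emph{disjoint} sets of positive roots). The case $I = \vazio$ is entirely analogous, with $\mf{X}_{\alpha_n}$ in place of $M_I(R)$ and the trivial Levi $\L_\vazio(R) = \mc{H}$.

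The main obstacle I anticipate is the first step: the combinatorial bookkeeping needed to certify that every Chevalley commutator between a generator of $\mc{K}_I(R)$ and a generator of $\P_I(R)$ lands back in $\mc{K}_I(R)$. Everything else is a clean consequence of the disjoint decomposition $\Phi_{\Ext(I)} = \Phi_I \cupdot \Phi_{\nonAdj(I)}$ and the normal-form structure of the unipotent radical, but the normality argument must treat positive-root and Levi-root generators simultaneously, and this is where the ``positive $\Adj(I)$-coefficient'' invariant plays its decisive role.
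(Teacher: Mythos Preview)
Your proposal is correct and follows essentially the same approach as the paper's proof: establish normality of $\mc{K}_I(R)$ via the Chevalley commutator formula and Steinberg relations, then use the disjoint decomposition $\Phi_{\Ext(I)} = \Phi_I \cupdot \Phi_{\nonAdj(I)}$ to recognize $\LE_I(R)$ as a complement. Your explicit ``positive $\Adj(I)$-coefficient'' invariant is precisely the bookkeeping the paper leaves implicit when it invokes \bref{ChevalleyRel} and \bref{SteinbergRel} for normality, so there is no substantive divergence.
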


\begin{proof}
 Assume first that $I = \vazio$. By Chevalley's commutator formula \bref{ChevalleyRel} and Steinberg's relations \bref{SteinbergRel}, we see that $\mc{K}_n(R) \nsgp \B_\Phi(R)$ and $\LE_n(R) = \mf{X}_{\alpha_n} \rtimes \H \cong \B_\Phi(R) / \mc{K}_n(R)$. Suppose $ I \neq \vazio$. Again from \bref{ChevalleyRel} and \bref{SteinbergRel} it follows that $\K_I(R) \nsgp \P_I(R)$. Since $\Ext(I)$ is the disoint union of $I$ and $\nonAdj(I)$, we have that the unipotent root subgroups of $\LE_I(R)$ do not involve any $\mf{X}_\gamma \leq \mc{K}_I(R)$ and are furthermore partitioned into blocks which pairwise have only non-adjacent roots between them. Hence, $\LE_I(R)$ is a complement of $\mc{K}_I(R)$ in $\P_I(R)$ with trivial intersection, so the natural projection $\P_I(R) \onto \P_I(R)/\mc{K}_I(R)$ yields the desired retraction.
\end{proof}

It follows from Lemma \ref{Stallings} and Proposition \ref{LEretrato} that the finite presentability of $\LE_I(R)$ (or of all the $\LE_n(R)$) is a necessary condition for the finite presentability of the whole parabolic $\P_I(R)$. Proposition \ref{LEretrato} also implies that we can make use of the usual presentation of a semi-direct product to build a presentation for $\P_I$ out of presentations of $\LE_I$ and $\mc{K}_I$. For the Borel subgroup $\B_\Phi(R)$, i.e. the case $I = \vazio$, we will instead make use of a presentation for its unipotent radical $\U_\vazio(R) = \gera{\mf{X}_\gamma \tqalt \gamma \in \Phi^+}$. We shall therefore need convenient presentations for $\U_\vazio(R)$ and $\mc{K}_I(R)$ for the proof of Theorem \ref{A}, which we describe in the sequel.

Recall that the unipotent radical $\U_I(R)$---for arbitrary $I$---admits a well-known presentation obtained as follows. The unipotent root elements $x_\gamma(r)$, with $\gamma$ running over $\Phi^+ \backslash \Phi_I$ and $r \in R$, form the generating set; the defining relators are given by the Chevalley commutator formulae \bref{ChevalleyRel} and the additive condition $x_\gamma(r) \cdot x_\gamma(s) = x_\gamma(r+s)$ for all $\gamma \in \Phi^+ \backslash \Phi_I,~ r,s \in R$. For a geometric proof of this in the context of RGD systems one might proceed as in \cite[Chapter 8.10]{AbraBrown}. A more straightforward proof can be obtained by mimicking the arguments in \cite[Thm 2.a, see also closing remark 1]{AzadBarrySeitz} to describe a central series for $\U_I(R)$ as a nilpotent group (see also \cite[Prop. 17.3]{MaTe}).

Such presentation usually involves many more generators then needed (in fact, copies of the whole base ring $R$) and many relations. The additive structure of $R$ is encoded in the unipotent root subgroups, and its multiplicative structure shows up in the commutators. Picking a generating set for $\Addi(R) = (R, +)$, one can rewrite such a presentation by replacing the additive conditions by (copies of) defining relators of $\Addi(R)$ and then keeping only commutator formulae that involve the chosen generators of $\Addi(R)$. The drawback is that the commutators might become quite messy. These methods are explicitly illustrated, for instance, in \cite{BissDasgupta} for the unipotent radical of Borel subgroups in type $A_n$, though in that work in a very economical way. Since we are considering qualitative rather than quantitative properties, we make no attempt to decrease the numbers of generators and relators used.

\begin{lem} \label{presunip}
Let $T \subseteq R$ be a generating set for the underlying additive group $\Addi(R)$ of the ring $R$. For every pair $t^i,s^j$ with $t,s\in T$ and $i,j\in\Z$, fix an additive expression $m(t^i,s^j)$ in terms of $T$ for the product $t^i s^j$, that is, $m(t^i,s^j)$ is of the form
\[
m(t^i,s^j)= \sum a_u u(t^i,s^j), \mbox{ with } u = u(t^i,s^j) \in T \mbox{ and } a_u \in \Z,
\]
where all but finitely many $a_u$'s are $0$, and (the image of) $m(t^i,s^j)$ in $R$ equals $t^i s^j$.

The unipotent radical $\mc{U}_I(R)$ admits a presentation with generating set
\[
\mc{Y} = \set{x_\gamma(t) \tq t\in T, \gamma \in \Phi^+ \backslash \Phi_{I}}
\]
and a set of defining relators $\mc{S}$ given as follows. For all $\gamma, \eta \in \Phi^+ \backslash \Phi_I$,
\begin{equation} \label{U_S.1}
[x_\gamma(t), x_\eta(s)] = \begin{cases} \displaystyle\prod_{m\gamma + n\eta \in \Phi^+} \left( \prod_u^{} x_{m\gamma+n\eta}(u(t^m,s^n))^{a_u\cdot C^{\gamma,\eta}_{m,n}} \right), \mbox{ if } \gamma + \eta \in \Phi;\\ 
1 \mbox{ otherwise}, \end{cases}
\end{equation}
where $u = u(t^m, s^n)$ is as above, and
\begin{equation} \label{U_S.2}
\prod_{i=1}^n x_\gamma(t_{\lambda_i})^{a_i} = 1 \mbox{ whenever } \sum_{i=1}^n a_i t_{\lambda_i} = 0 \mbox{ in } R, \mbox{ where } a_i\in\Z,~t_{\lambda_i}\in T.
\end{equation}
\end{lem}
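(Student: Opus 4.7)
The plan is to start from the well-known ``full'' presentation of the unipotent radical $\mc{U}_I(R)$ alluded to just before the statement---with generating set $\{x_\gamma(r) \tq r \in R,~ \gamma \in \Phi^+ \backslash \Phi_I\}$, defining relators given by the Chevalley commutator formulae \bref{ChevalleyRel} and the additive relators $x_\gamma(r) x_\gamma(s) = x_\gamma(r+s)$ for all $\gamma \in \Phi^+ \backslash \Phi_I$ and all $r,s \in R$---and then reach the claimed presentation via a sequence of Tietze transformations that replace the ambient ring $R$ by the additive generating set $T$. Existence of this initial presentation is essentially the assertion that $\mc{U}_I(R)$ is the direct limit, along the central series coming from the level filtration by roots in $\Phi^+ \backslash \Phi_I$, of successive central extensions of copies of $\Addi(R)$ with cocycles prescribed by the structure constants $C^{\gamma,\eta}_{m,n}$; we invoke it as a black box along the lines of \cite[Thm.~2.a]{AzadBarrySeitz} and \cite[Prop.~17.3]{MaTe}.

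With this in hand, I would first Tietze-eliminate the ``superfluous'' generators. Fix, for every element $r \in R \setminus T$, an additive expression $r = \sum_i a_i t_i$ with $t_i \in T$, $a_i \in \Z$. The additive relators ensure that in the group one must have $x_\gamma(r) = \prod_i x_\gamma(t_i)^{a_i}$, so each such $x_\gamma(r)$ can be regarded as an abbreviation for a $\mc{Y}$-word and be removed from the generating set without changing the group. After this step, the only surviving generators are those in $\mc{Y}$.

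Next I would rewrite the surviving relators. The additive relators $x_\gamma(r) x_\gamma(s) = x_\gamma(r+s)$ now only need to be imposed for $r,s \in T$, and by standard manipulations they are equivalent, for each fixed $\gamma$, to saying that $t \mapsto x_\gamma(t)$ factors through a presentation of the abelian group $(R,+)$ with generators $T$; that is precisely the content of \bref{U_S.2}. For the commutator relators, one keeps only $[x_\gamma(t), x_\eta(s)]$ with $t, s \in T$. The right-hand side of \bref{ChevalleyRel} produces factors $x_{m\gamma+n\eta}(t^m s^n)^{C_{m,n}^{\gamma,\eta}}$; using the chosen additive expression $m(t^m, s^n) = \sum_u a_u u$ for $t^m s^n \in R$ and applying \bref{U_S.2} at the root $m\gamma + n\eta$, each such factor rewrites as $\prod_u x_{m\gamma+n\eta}(u)^{a_u C_{m,n}^{\gamma,\eta}}$, exactly as in \bref{U_S.1}.

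The only non-routine point---and the one I would treat most carefully---is verifying that the reduced set of commutator relators \bref{U_S.1}, together with the abelian relators \bref{U_S.2}, is enough to recover the full Chevalley commutator formula \bref{ChevalleyRel} for \emph{arbitrary} $r, s \in R$. This is a commutator-calculus argument: expressing $r = \sum_i a_i t_i$ and $s = \sum_j b_j s_j$ with $t_i, s_j \in T$, one uses the identities $[xy,z] = [x,z]^y [y,z]$ and $[x,yz] = [x,z][x,y]^z$ to expand $[x_\gamma(r), x_\eta(s)]$ into a product of commutators of elements of $\mc{Y}$. Working modulo the next term of the descending central series of $\mc{U}_I(R)$ (indexed by the level of the root $m\gamma + n\eta$), the conjugation superscripts become trivial and commutators are bilinear in their entries, so the expansion collapses to exactly the image of $x_{m\gamma+n\eta}(r^m s^n)^{C^{\gamma,\eta}_{m,n}}$ after using \bref{U_S.2} once more to pass from the additive combination of $t_i^m s_j^n$'s back to $r^m s^n$. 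An induction on the level, starting from the deepest roots where $\mc{U}_I$ is central-abelian, then promotes this equality from the associated graded to $\mc{U}_I(R)$ itself, which shows that no commutator relators have been lost and completes the Tietze reduction.
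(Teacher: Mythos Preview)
Your proposal is correct, but it takes a genuinely different route from the paper. The paper does not prove Lemma~\ref{presunip} directly; instead it proves the analogous Lemma~\ref{presK} and instructs the reader to adapt that argument. That proof does \emph{not} proceed by Tietze-reducing the full presentation. Rather, it lets $\til{\mc{U}}_I(R)$ be the group abstractly defined by the presentation $\gera{\mc{Y} \tq \mc{S}}$, builds a terminating central series in both $\til{\mc{U}}_I(R)$ and $\mc{U}_I(R)$ using the level (or ``adjacency level'') of roots, checks that the obvious surjection $f:\til{\mc{U}}_I(R)\onto\mc{U}_I(R)$ induces isomorphisms on each graded factor---each factor being a direct product of copies of $\Addi(R)$, and the relations~\bref{U_S.2} giving precisely a presentation of $\Addi(R)$ on the generating set $T$---and then concludes by induction on the nilpotency class that $f$ is an isomorphism.

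The trade-off is this: the paper's argument completely avoids the commutator calculus you describe in your last paragraph, at the cost of being less explicit about \emph{how} the full Chevalley relations~\bref{ChevalleyRel} are recovered from the finite list~\bref{U_S.1}. Your approach makes that recovery explicit, which is informative, but the step where you expand $[x_\gamma(r),x_\eta(s)]$ multilinearly and match it against the right-hand side of~\bref{ChevalleyRel} is delicate in the non-simply-laced case (the terms $x_{m\gamma+n\eta}(r^m s^n)$ with $m+n>2$ are not bilinear in $r,s$), and your sketch relies on the reverse induction on level to absorb the resulting cross-terms. This works, but the paper's central-series comparison is shorter and sidesteps that bookkeeping entirely.
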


This gives us, in particular, a presentation for the unipotent radical $\U_\vazio(R)$ of the Borel subgroup $\B_\Phi(R) \leq \ueCD(R)$. Our next remark is that $\mc{K}_I(R)$ admits a presentation very similar to that of $\U_I(R)$ described above. The proof is analogous to that of the previous lemma and we include it here for the sake of completeness. The reader unfamiliar with the proof of Lemma \ref{presunip} might just adapt the proof of the lemma below to the set-up of \ref{presunip}.

\begin{lem} \label{presK}
Let $T \subseteq R$ and let the expressions $m(t^i,s^j)$ be as in Lemma \ref{presunip} and suppose $I \neq \vazio$. Then the kernel $\mc{K}_I(R)$ of the retraction $r : \P_I(R) \onto \LE_I(R)$ of Proposition \ref{LEretrato} admits a presentation with generating set
$$\mc{Y} = \set{x_\gamma(t) \tq t\in T, \gamma \in \Phi^+ \backslash \Phi_{\Ext(I)}}$$
and a set of defining relators $\mc{S}$ given by the same formulae \bref{U_S.1} and \bref{U_S.2} of Lemma \ref{presunip}, but now for all $\gamma, \eta \in \Phi^+ \backslash \Phi_{\Ext(I)}$.
\end{lem}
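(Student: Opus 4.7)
The strategy is to adapt the proof of Lemma \ref{presunip} almost verbatim, the only new ingredient being a combinatorial closure property of the index set $\Phi^+ \setminus \Phi_{\Ext(I)}$. First I observe that $\mc{K}_I(R)$ is nilpotent: it sits inside the unipotent radical of the Borel subgroup $\B_\Phi(R)$, for which Lemma \ref{presunip} (with $I = \vazio$) already furnishes a presentation of exactly the same shape.

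The key root-theoretic claim is the following: if $\gamma, \eta \in \Phi^+ \setminus \Phi_{\Ext(I)}$ are such that $m\gamma + n\eta \in \Phi^+$ for some positive integers $m,n$, then $m\gamma + n\eta$ still lies in $\Phi^+ \setminus \Phi_{\Ext(I)}$. Indeed, since $\gamma \notin \Phi_{\Ext(I)}$ and $\gamma \in \Phi^+$, writing $\gamma$ in the simple root basis yields nonnegative integer coefficients with at least one strictly positive coefficient on some $\delta \in \Adj(I)$. Because $\eta$ is likewise a nonnegative combination of simple roots and $m,n > 0$, the coefficient of $m\gamma + n\eta$ on $\delta$ remains strictly positive, whence $m\gamma + n\eta \notin \Phi_{\Ext(I)}$. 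Two consequences follow: on the one hand, the right-hand side of \bref{U_S.1}, restricted to $\gamma, \eta \in \Phi^+ \setminus \Phi_{\Ext(I)}$, involves only symbols from the generating set $\mc{Y}$, and since the relators \bref{U_S.1} and \bref{U_S.2} genuinely hold among the unipotent root elements inside $\mc{K}_I(R)$, we obtain a natural surjection $\mc{G} := \gera{\mc{Y} \tq \mc{S}} \onto \mc{K}_I(R)$; on the other hand, ordering $\Phi^+ \setminus \Phi_{\Ext(I)}$ by height, the closure property together with the Chevalley commutator formula provides a central filtration of $\mc{K}_I(R)$ by root levels whose successive quotients are generated by copies of $(R,+)$, exactly as for $\U_I(R)$ in the proof of Lemma \ref{presunip}.

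To finish, one shows by induction on height that every word in $\mc{G}$ can be rewritten in a normal form $\prod_\gamma x_\gamma(r_\gamma)$, with $\gamma$ ranging over $\Phi^+ \setminus \Phi_{\Ext(I)}$ in the fixed order and each $r_\gamma \in R$ uniquely determined by the image in $\mc{K}_I(R)$. The commutator relations \bref{U_S.1} serve to move a generator $x_\gamma(t)$ past a generator $x_\eta(s)$ of strictly higher level, up to a product of commutators which by the closure property live at strictly higher levels and can thus be absorbed by the induction hypothesis; the additive relators \bref{U_S.2} then allow the exponents at each fixed level to be collected into a single element of $R$. Uniqueness of the normal form in $\mc{K}_I(R)$ implies that the surjection $\mc{G} \onto \mc{K}_I(R)$ is also injective, proving the lemma. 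The only genuine obstacle is the combinatorial closure fact above; once it is in place, the rest is a direct transcription of the argument for Lemma \ref{presunip}.
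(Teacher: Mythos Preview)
Your proposal is correct and follows essentially the same approach as the paper: establish that $\Phi^+ \setminus \Phi_{\Ext(I)}$ is closed under positive root combinations (the paper phrases this via the \emph{adjacency level} $\mathrm{alvl}(\gamma) = \sum_{\delta \in \Adj(I)} q_\delta$, which is precisely the positive coefficient you single out), use this to build a terminating central series for both $\mc{K}_I(R)$ and the abstractly presented group, and conclude by comparing the two. The only cosmetic differences are that the paper filters by adjacency level rather than total height and argues via matching successive quotients instead of a normal form, but these are interchangeable variants of the same nilpotent-group argument.
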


\begin{proof}
Given a positive root $\alpha \in \Phi^+$, we can write (uniquely)
\[
 \alpha = \sum_{\delta \in \Ext(I)} p_\delta \delta + \sum_{\gamma \in \Adj(I)} q_\gamma \gamma, \mbox{ with } p_\delta, q_\gamma \in \Nzero.
\]
We define the \emph{adjacency level of} $\alpha$, denoted $\rm{alvl}(\alpha)$, to be the integer $\mathrm{alvl}(\alpha) = \sum_{\gamma \in \Adj(I)} q_\gamma$ from the equation above. Let
\[
 \mc{K}_j = \gera{\mf{X}_\gamma \tqalt \gamma \in \Phi^+ \backslash \Phi_{\Ext(I)} \mbox{ has } \mathrm{alvl}(\gamma) \geq j}.
\]
By the commutator formuale \bref{ChevalleyRel}, we see that each $\mc{K}_j$ is normal in $\mc{K}_I(R)$ and each factor $\mc{K}_j / \mc{K}_{j+1}$ is canonically isomorphic to
\[
 \underset{\mathrm{alvl}(\gamma) = j}{\prod_{\gamma \in \Phi^+ \backslash \Phi_{\Ext(I)}}} \mf{X}_\gamma \cong \underset{\mathrm{alvl}(\gamma) = j}{\prod_{\gamma \in \Phi^+ \backslash \Phi_{\Ext(I)}}} \Addi(R).
\]
Hence, the subgroups $\mc{K}_j$ give a terminating central series for $\mc{K}_I(R)$ (though it may not coincide with the lower central series).

Now let $\til{\mc{K}}_I(R)$ be the group defined by the presentation stated in the lemma, with the decoration $ ~ \til{} ~ $ above the elements of the generating set (e.g. $\til{x}_\gamma(t)$ instead of $x_\gamma(t)$). Define analogously
\[
 \til{\mc{K}}_j = \gera{\set{\til{x}_\gamma(t) \tq t \in T \mbox{ and } \gamma \in \Phi^+ \backslash \Phi_{\Ext(I)} \mbox{ of } \mathrm{alvl}(\gamma) \geq j}}.
\]
By von Dyck's theorem, the obvious map
\[
f : \mc{Y} \longrightarrow \mc{K}_I(R)
\]
\[
\til{x}_\gamma(t) \mapsto x_\gamma(t)
\]
induces a surjection $\til{\mc{K}}_I(R) \onto \mc{K}_I(R)$---which we also call $f$ by abuse of notation---because the defining relations \bref{U_S.1} and \bref{U_S.2} hold in $\mc{K}_I(R)$ and each $\mf{X}_\gamma \cong \Addi(R)$ is generated by the $\set{x_\gamma(t) \tq t \in T}$. By \bref{U_S.1} we also see that $\til{\mc{K}}_j \nsgp \til{\mc{K}}_I(R)$ for every $j$, and $f$ restricts to surjections $\til{\mc{K}}_j \onto \mc{K}_j$. Furthermore, \bref{U_S.1} and \bref{U_S.2} imply that each factor $\til{K}_j / \til{K}_{j+1}$ is isomorphic to
\[
 \underset{\mathrm{alvl}(\gamma) = j}{\prod_{\gamma \in \Phi^+ \backslash \Phi_{\Ext(I)}}} \til{\mf{X}}_\gamma \cong \underset{\mathrm{alvl}(\gamma) = j}{\prod_{\gamma \in \Phi^+ \backslash \Phi_{\Ext(I)}}} \Addi(R),
\]
where the $\til{\mf{X}_\gamma}$ are defined in the obvious way. Such maps are all induced by $f$. So $\til{\mc{K}}_I(R)$ and $\mc{K}_I(R)$ are nilpotent groups with isomorphic (terminating) central series via isomorphisms induced by the same surjection. It then follows by induction on the nilpotency class that $f$ is an isomorphism.
\end{proof}

\section{Proof of the main result} \label{teoremao}

\noindent

The standing assumption of Theorem \ref{A}---and thus of this whole section---is that the standard parabolic subgroups of the universal elementary Chevalley--Demazure $\ueCD(R)$ \emph{are finitely generated}. We fix once and for all an arbitrary total order on the set of simple roots $\Delta \subset \Phi$ which extends to a total order on $\Phi$.

As seen in Section \ref{Levi}, finite presentability of $\P_I(R)$ implies that of $\LE_I(R)$ (or of all the $\LE_n(R)$ in case $I = \vazio$) by Lemma \ref{Stallings} and Proposition \ref{LEretrato}. It therefore remains to prove the converse to Theorem \ref{A}, stated below. 

\begin{thm} \label{avolta}
Let $\P_I(R) \leq \ueCD(R)$ be a standard parabolic subgroup and let $R$ be \QGff. Assume $I \neq \set{\alpha},~ \alpha$ long, in case $\Phi = G_2$. If $I \neq \vazio$, suppose the (unique) extended Levi factor $\LE_I(R)$ is finitely presented. In case $I = \vazio$, we assume each extended Levi factor $\LE_n(R)$ to admit a finite presentation. Then $\P_I(R)$ is finitely presented.
\end{thm}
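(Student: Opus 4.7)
The plan is to exploit the retract from Proposition \ref{LEretrato} and assemble a finite presentation of $\P_I(R)$ out of a finite presentation of the extended Levi factor. For $I \neq \vazio$, I would write $\P_I(R) = \mc{K}_I(R) \rtimes \LE_I(R)$ and combine the hypothesized finite presentation of $\LE_I(R)$ with the presentation of $\mc{K}_I(R)$ from Lemma \ref{presK}, augmented by finitely many conjugation relations encoding the $\LE_I$-action on $\mc{K}_I$. For $I = \vazio$ I would use $\B_\Phi(R) = \U_\vazio(R) \rtimes \H$ together with Lemma \ref{presunip}, leveraging that $\H \leq \LE_n(R)$ for every simple root $\alpha_n$, so that each generator $x_\gamma(t)$ of $\U_\vazio(R)$ can be obtained from some $x_{\alpha_n}(t)$ living inside an appropriate $\LE_n(R)$ via Weyl translation.

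Two reductions are then needed. First, finite generation of the kernel: the standing hypothesis that parabolics are finitely generated forces $R$ to be finitely generated as a ring, say by $\{r_1, \ldots, r_m\}$, and by \ref{SteinbergRel} every $x_\gamma(t)$ is a conjugate of some $x_\gamma(1)$ by a word in the torus elements $h_\alpha(r_i) \in \LE$. Hence the kernel is finitely normally generated in $\P_I(R)$, and the Chevalley commutator relations \ref{U_S.1} follow from a finite base $\{[x_\gamma(1),x_\eta(1)] : \gamma,\eta \in \Phi^+,~ \gamma+\eta \in \Phi\}$: an arbitrary instance is recovered by conjugating a base commutator by torus elements and invoking the Chevalley formula \ref{ChevalleyRel}.

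The crux is reducing the additive relations \ref{U_S.2} to finitely many instances, which is precisely where the \QGff{} hypothesis enters. The goal is to derive any identity $\prod_i x_\gamma(t_{\lambda_i})^{a_i} = 1$ in $\mf{X}_\gamma$ from an additive relation already encoded in the extended Levi factor. If $\Bzero$ is finitely presented, then for a suitable simple root $\delta$ inside $\LE$ the subgroup $\gera{\mf{X}_\delta, \mc{H}_\delta}$ covers a quotient of $\Bzero$, so the finite presentation imports all additive relations of $(R,+)$, which then transport to an arbitrary $\mf{X}_\gamma$ through the Weyl-style conjugations of \ref{SteinbergReflection}. If instead $R$ is \NVBff, each positive root $\gamma$ appearing in $\mc{K}_I$ (or $\U_\vazio$) decomposes as $\gamma = \alpha+\beta$ with $\alpha$ inside $\LE$ and with structure constant $C^{\alpha,\beta}_{1,1} = \pm 1$, so an additive identity living in $\mf{X}_\alpha \leq \LE$ pushes forward to $\mf{X}_\gamma$ by the Chevalley formula, and a short induction on the height of $\gamma$ completes the argument.

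The main obstacle is this last step, and the excluded case $\Phi = G_2$, $I = \{\alpha\}$ with $\alpha$ long, is precisely its failure mode: the structure constants occurring in the commutators needed to reach the root subgroups outside $\LE_{\{\alpha\}}$ lie in $\{\pm 2,\pm 3\}$, so the \NVB{} route collapses, and no $\Bzero$-subgroup sits inside $\LE_{\{\alpha\}}$ in position to feed the required additive relations. Verifying the above dichotomy root system by root system, aided by the height induction in the \NVB{} branch, should carry the theorem through in all remaining cases.
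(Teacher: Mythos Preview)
Your overall architecture matches the paper's: the same semi-direct decompositions, the same \QG{} dichotomy, and the same idea of importing additive relations from inside the extended Levi factor. But there is a genuine gap in your first reduction. The claim that ``every $x_\gamma(t)$ is a conjugate of some $x_\gamma(1)$ by a word in the torus elements $h_\alpha(r_i)$'' is false. Steinberg's relation \bref{SteinbergRel} gives $h_\beta(u) x_\gamma(1) h_\beta(u)^{-1} = x_\gamma(u^{(\gamma,\beta)})$, so torus conjugation of $x_\gamma(1)$ only reaches $x_\gamma(t)$ for $t$ a \emph{unit}; moreover the ring generators $r_i$ need not be units, so $h_\alpha(r_i)$ is not even defined. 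What finite generation of $\Bzero$ actually buys is a finite set $T_0 \subset R$ such that $R^\times \cdot T_0$ generates $(R,+)$ additively, and the paper works throughout with the finite generating set $\til{T} = A^{[c_\Phi]}\cdot T_0$ for the root subgroups. This correction propagates: your commutator reduction ``conjugate $[x_\gamma(1),x_\eta(1)]$ by torus elements'' does not produce $[x_\gamma(r),x_\eta(s)]$ for independent $r,s$, since a single torus element acts on \emph{both} root subgroups simultaneously. The paper decouples the two arguments via Lemma \ref{derTrick} (a subtorus centralizing $\mf{X}_\alpha$ while scaling $\mf{X}_\beta$ by an $n$-th power), and this forces enlarging the generating set by the toral constant $c_\Phi(R)$; the actual reduction is the content of the rather intricate Lemma \ref{DERTrick}.

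A second, smaller gap: in the \NVB{} branch you assert $C^{\alpha,\beta}_{1,1} = \pm 1$, but in types $B_n, C_n, F_4, G_2$ the constants take values in $\{\pm 1,\pm 2,\pm 3\}$. The \NVB{} hypothesis makes them \emph{invertible}, not trivial, and the paper exploits this by adjoining the explicit relations \bref{StructureConstantsRel}--\bref{StructureConstantsRel2} and carrying the inverses $C^{-1}$ through the commutator identities (see \bref{possibilidades}). Finally, for $I = \vazio$ note that the Weyl element $w_\beta$ does not lie in $\B_\Phi(R)$, so ``Weyl translation'' cannot be performed inside the Borel; the paper uses \bref{SteinbergReflection} only to transport the \emph{abstract} finite presentation of $\LE_n(R)$ to each $\mf{X}_\gamma \rtimes \H$ (Lemma \ref{XHF2}), then glues these externally.
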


The strategy to prove Theorem \ref{avolta} is as follows. We first discuss the structure of the base ring $R$ under our standing assumptions. Now, recall that a semi-direct product $G = N \rtimes_\phee Q$, with $N = \gera{ Y \tq S},~ Q = \gera{ X \tq R }$ and $\phee$ the homomorphism $Q \xrightarrow{\phee} \Aut(N)$ determining the action, admits the following presentation.
\begin{equation} \label{semidireto}
G = \gera{X \cup Y \tq S \cup R \cup \set{ x y x^{-1} \phee(y) \tq x \in X,~ y \in Y}}. \tag{*}
\end{equation}
We then consider the two cases, $I = \vazio$ and $I \neq \vazio$. In the former, $\P_I(R) = \P_\vazio(R) = \B_\Phi(R) = \U_\vazio(R) \rtimes \H$. Since $\H$ is a finitely generated abelian group, we can take a \emph{finite} presentation for $\H$ with finitely many semi-simple root elements as generators. Combining this with the presentation for $\U_\vazio(R)$ from Lemma \ref{presunip} and Steinberg's relations \bref{SteinbergRel}, we get a ``canonical'' (in general infinite) presentation for $\B_\Phi(R)$ as in \bref{semidireto}. On the other hand, using the fact that the extended Levi factors $\LE_n(R)$ are finitely presented, we obtain finite presentations for each subgroup $\mf{X}_\gamma \rtimes \H \leq \B_\Phi(R),~ \gamma \in \Phi^+$. Starting from those presentations, we appropriately add unipotent root elements and Chevalley and Steinberg relations to construct a finitely presented group $\til{\B}_\Phi(R)$. Finally, we apply von Dyck's theorem twice to show that $\til{\B}_\Phi(R)$ is isomorphic to $\B_\Phi(R)$.

For $I \neq \vazio$, we start by taking a finite presentation for the extended Levi factor $\LE_I(R)$, with generating set given by appropriately chosen unipotent and semi-simple root elements. The ``canonical'' presentation for $\P_I(R)$ is now given by the chosen presentation for $\LE_I(R)$ together with the presentation of $\mc{K}_I(R)$ from Lemma \ref{presK} and, of course, Chevalley and Steinberg relations. Then, we break down the proof in two further cases given by the {\QG} condition: If $\Bzero$ is finitely presented, we get finite presentations for each $\mf{X}_\gamma \rtimes \H,~ \gamma \in \Phi^+ \backslash \Phi_{\Ext(I)}$, and then proceed similarly to the previous case of $\B_\Phi(R)$; If $R$ is \NVBff, we construct a finitely presented group $\til{\P}_I(R)$ from $\LE_I(R)$ adding just the obvious generators from $\mc{K}_I(R)$ as a normal subgroup of $\P_I(R)$ and the necessary Chevalley and Steinberg relations, then proceed to show via commutator computations that $\til{\P}_I(R) \cong \P_I(R)$.

We aim to state the intermediate results distinguishing as little as possible the different types of root systems, which means we shall often write commutator formulae in their full generality. We therefore warn the reader to be armed with patience to face the lengthy notation battle ahead.

Let us begin with the following elementary results on root systems and root subgroups that will be needed in the sequel. 

\begin{lem}[{\cite[10.2 A]{Humphreys}}] \label{Humphreys}
 Given a positive, non-simple root $\gamma \in \Phi^+\backslash \Delta$, there exist a simple root $\alpha \in \Delta$ and a positive root $\beta \in \Phi^+$ such that $\alpha + \beta = \gamma$.
\end{lem}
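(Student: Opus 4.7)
The plan is to use the standard height-induction / inner-product argument from the basic theory of reduced root systems, which is exactly how this statement is typically proved (it is Humphreys' exercise 10.2 A). The key input is that if $\gamma,\alpha\in\Phi$ satisfy $\langle\gamma,\alpha\rangle>0$ and $\gamma\neq\alpha$, then $\gamma-\alpha\in\Phi$; this is a consequence of the reflection $r_\alpha(\gamma)=\gamma-(\gamma,\alpha)\alpha$ being a root together with the classification of $\alpha$-root strings (see e.g.\ Humphreys, 9.4), and it can be quoted freely.

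First, I would show that there exists $\alpha\in\Delta$ with $\langle\gamma,\alpha\rangle>0$. Writing $\gamma=\sum_{\alpha_i\in\Delta}c_i\alpha_i$ with all $c_i\in\Nzero$ (since $\gamma\in\Phi^+$), one has
\[
\langle\gamma,\gamma\rangle \;=\; \sum_{\alpha_i\in\Delta} c_i\,\langle\gamma,\alpha_i\rangle.
\]
As $\gamma\neq 0$, the left-hand side is positive, so at least one summand on the right is positive; since $c_i\geq 0$, the corresponding factor $\langle\gamma,\alpha_i\rangle$ must itself be strictly positive. Fix such a simple root and call it $\alpha$.

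Next I would set $\beta:=\gamma-\alpha$ and show $\beta\in\Phi^+$. Since $\gamma\notin\Delta$, we have $\gamma\neq\alpha$, so the cited fact about reflections gives $\beta=\gamma-\alpha\in\Phi$. To see that $\beta$ is positive, expand $\beta=\sum c_i\alpha_i-\alpha$ in the basis $\Delta$. Because $\gamma$ is non-simple, either some coefficient $c_j$ with $\alpha_j\neq\alpha$ is strictly positive, or the coefficient of $\alpha$ in $\gamma$ is at least $2$; in either case $\beta$ has at least one strictly positive coefficient in the basis $\Delta$. Since every root of $\Phi$ has coordinates of a single sign in the basis $\Delta$, it follows that $\beta\in\Phi^+$, completing the proof with $\alpha+\beta=\gamma$.

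The only conceptually non-trivial step is the implication ``$\langle\gamma,\alpha\rangle>0$ and $\gamma\neq\pm\alpha$ imply $\gamma-\alpha\in\Phi$'', but this is standard and is quoted from the root-system literature; everything else is a short calculation, so no real obstacle is expected.
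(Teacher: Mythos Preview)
Your argument is correct and is precisely the standard proof of this fact (indeed, Humphreys 10.2(A) is this statement and the hint there is exactly the inner-product computation you give). One small point you use implicitly but do not state: from $\sum_i c_i\langle\gamma,\alpha_i\rangle>0$ with all $c_i\geq 0$ you obtain an index $i$ with \emph{both} $c_i>0$ and $\langle\gamma,\alpha_i\rangle>0$; the first of these ensures that the $\alpha$-coefficient of $\beta=\gamma-\alpha$ is $\geq 0$, which is what actually rules out $\beta$ having mixed-sign coordinates. Your positivity argument as written is still valid (one positive coefficient plus ``$\beta$ is a root'' suffices), but making $c_\alpha>0$ explicit would tighten it.

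As for comparison with the paper: there is nothing to compare, since the paper does not prove this lemma at all --- it simply quotes it from \cite[10.2~A]{Humphreys}. Your proposal supplies exactly the argument that reference points to.
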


\begin{lem} \label{adjacencia}
 If $\vazio \neq I \subsetneq \Delta$ and $\alpha \in \Adj(I)$, then there exist $\til{\alpha} \in \Phi_I$ and $\til{\beta} \in \Phi^+$ such that $\til{\alpha} + \til{\beta} = \alpha$.
\end{lem}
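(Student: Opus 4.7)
The key observation is that although the statement superficially resembles Lemma \ref{Humphreys}, it is qualitatively different: here $\alpha$ is itself a simple root, so it cannot be written as a sum of two \emph{positive} roots. The rescue is that $\Phi_I$ contains \emph{negative} roots as well, and the plan is to exploit precisely this asymmetry.

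First I would unwind the hypothesis. By the definition of $\Adj(I)$, there exists $\beta \in I$ such that $\alpha$ and $\beta$ are joined by an edge in $\mc{D}_\Delta$; equivalently $\langle \alpha, \beta \rangle \neq 0$. Since $\alpha$ and $\beta$ are distinct simple roots, their inner product is non-positive, hence $\langle \alpha, \beta \rangle < 0$. A standard fact on reduced root systems (e.g.\ \cite[\S 9.4]{Humphreys}) then implies that $\alpha + \beta \in \Phi$, and since $\alpha, \beta \in \Delta \subset \Phi^+$, in fact $\alpha + \beta \in \Phi^+$.

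Now I would simply set
\[
 \til{\alpha} := -\beta \quad \text{and} \quad \til{\beta} := \alpha + \beta.
\]
Then $\til{\alpha} \in \Phi_I$ because $\beta \in I$ forces $-\beta \in \Phi_I = \vspan_\Z(I) \cap \Phi$, and $\til{\beta} \in \Phi^+$ by the previous paragraph. A direct check gives $\til{\alpha} + \til{\beta} = -\beta + (\alpha + \beta) = \alpha$, as desired.

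There is essentially no serious obstacle here, so the main thing to be careful about is simply not to confuse this with Lemma \ref{Humphreys}: the point that makes the argument work (and that the statement is designed to highlight for later use in the proof of Theorem \ref{avolta}) is that the summand $\til{\alpha}$ is allowed to be a negative root lying in the Levi root subsystem $\Phi_I$, which is exactly what will later allow one to absorb certain commutators of unipotent elements into relations coming from $\LE_I(R)$.
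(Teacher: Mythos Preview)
Your proof is correct and matches the paper's own argument essentially verbatim: the paper also picks $\delta \in I$ adjacent to $\alpha$, sets $\til{\beta} := \delta + \alpha \in \Phi^+$ and $\til{\alpha} := -\delta$. You supply slightly more justification for why the sum of adjacent simple roots is a root, but the approach is identical.
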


\begin{proof}
 Take $\delta \in I$ adjacent to $\alpha$. Then $\til{\beta} := \delta + \alpha \in \Phi^+$ and the claim follows for $\til{\alpha} := -\delta$.
\end{proof}

\begin{lem} \label{derTrick}
 Let $\mf{X}_\alpha, \mf{X}_\beta \leq \uCD(R)$ be distinct root subgroups with $\beta \neq -\alpha$. There exist a one-dimensional subtorus $H_{\alpha,\beta}(R) \leq \H$, say given by $h: \Mult(R) \xrightarrow{\cong} H_{\alpha, \beta},~ u \in R^\times \mapsto h(u)$, and an integer $n = n(\alpha,\beta) \neq 0$ such that $H_{\alpha,\beta}(R)$ centralizes $\mf{X}_\alpha$ and $h(u) x_\beta(r) h(u)^{-1} = x_\beta(u^n r)$ for all $x_\beta(r) \in \mf{X}_\beta$ and all $h(u) \in H_{\alpha, \beta}(R)$.
\end{lem}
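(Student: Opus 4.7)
The plan is to construct $H_{\alpha,\beta}(R)$ explicitly as the image of a cleverly chosen one-parameter subgroup of $\H$. Let $\alpha_1,\ldots,\alpha_r$ be the simple roots. For each integer vector $k = (k_1,\ldots,k_r) \in \Z^r$, consider the map
\[
h_{(k)} : \Mult(R) \to \H, \qquad u \mapsto \prod_{i=1}^{r} h_{\alpha_i}(u^{k_i}).
\]
This is a well-defined group homomorphism since the $h_{\alpha_i}$'s pairwise commute in the torus and each is multiplicative in its argument. By Steinberg's relation \bref{SteinbergRel}, conjugation by $h_{(k)}(u)$ acts on any root subgroup $\mf{X}_\gamma$ by $x_\gamma(r) \mapsto x_\gamma(u^{N(\gamma, k)} r)$, where
\[
N(\gamma, k) := \sum_{i=1}^{r} k_i\, (\gamma, \alpha_i).
\]

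The problem thus reduces to finding $k \in \Z^r$ with $N(\alpha, k) = 0$, $N(\beta, k) \neq 0$, and $\gcd(k_1,\ldots,k_r) = 1$. The key observation is that the linear forms $N(\alpha,\cdot)$ and $N(\beta,\cdot)$ on $\Q^r$ are linearly independent. Indeed, since the Cartan integer $(\cdot, \alpha_i)$ is linear in its first entry, the assignment $v \mapsto N(v, \cdot) \in (\Q^r)^*$ is linear; its kernel consists of those $v$ orthogonal to every simple root, hence orthogonal to the whole root space, hence zero. Consequently, a proportionality $N(\alpha,\cdot) = c N(\beta,\cdot)$ would force $\alpha = c\beta$. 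Since $\Phi$ is reduced and both $\alpha, \beta$ are roots, this forces $c = \pm 1$, i.e. $\beta = \pm\alpha$, contradicting the hypothesis.

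By linear independence, the conditions $N(\alpha, k) = 0$ and $N(\beta, k) \neq 0$ admit a simultaneous solution in $\Q^r$, and after clearing denominators and dividing by the greatest common divisor we obtain such $k \in \Z^r$ with $\gcd(k_i) = 1$. We then set $n := N(\beta, k) \neq 0$, define $h := h_{(k)}$, and let $H_{\alpha,\beta}(R) := h(\Mult(R)) \leq \H$. The required centralizing and scaling properties are immediate from the conjugation formula above. For the isomorphism $\Mult(R) \cong H_{\alpha,\beta}(R)$, we use that in the universal case the maximal torus satisfies $\H \cong \Mult(R)^r$ via $(u_1,\ldots,u_r) \mapsto \prod h_{\alpha_i}(u_i)$; hence $h(u) = 1$ iff $u^{k_i} = 1$ for all $i$, iff $u^{\gcd(k_i)} = u = 1$, proving injectivity.

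The only potential obstacle is bookkeeping: everything rests on the elementary linear-algebraic fact that $N(\alpha,\cdot)$ and $N(\beta,\cdot)$ are linearly independent functionals on $\Q^r$, together with the minor arithmetic adjustment ensuring $\gcd(k_i) = 1$ so that $h$ is injective rather than merely a finite cover onto its image.
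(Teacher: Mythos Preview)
Your proof is correct and takes a genuinely different route from the paper's. The paper first uses the Weyl group to reduce to the case where $\alpha$ and $\beta$ are simple, then proceeds by a short case analysis: if $\alpha\perp\beta$ take $H_{\alpha,\beta}=\mc{H}_\beta$; if $\rk(\Phi)\geq 3$ pick a third simple root $\gamma$ adjacent to $\beta$ but not to $\alpha$ and set $H_{\alpha,\beta}=\mc{H}_\gamma$; in the remaining rank~$2$ non-orthogonal case it constructs $h(u)=h_\beta(u)^{-q}h_\alpha(u)^p$ with $2p-q(\alpha,\beta)=0$ and checks directly that $n=p(\beta,\alpha)-2q\neq 0$. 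Your argument, by contrast, is uniform: you parametrize one-parameter subgroups of $\H$ by $\Z^r$ via the simple coroots, translate the centralizing and non-trivial-scaling conditions into the linear constraints $N(\alpha,k)=0$ and $N(\beta,k)\neq 0$, and solve these simultaneously by observing that $v\mapsto N(v,\cdot)$ is injective on the root space (so $\alpha\neq\pm\beta$ forces the two functionals to be independent). Your approach avoids case splits and, unlike the paper, explicitly verifies the injectivity of $h$ by arranging $\gcd(k_i)=1$ and invoking the isomorphism $\H\cong\Mult(R)^r$ in the simply-connected case. The paper's construction has the mild practical advantage that $h(u)$ is expressed using only $h_\alpha$ and $h_\beta$, which is convenient when the argument is later replayed inside the finitely presented group $\til{\P}_I(R)$ in Lemma~\ref{2.3'}; your construction would work there too but would involve all simple coroot subgroups.
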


\begin{proof}
Without loss of generality we may assume $\alpha$ and $\beta$ to be simple via the action of the Weyl group. If $\alpha$ is orthogonal to $\beta$, define $H_{\alpha, \beta}$ to be the semi-simple root subgroup $\mc{H}_\beta$ and the claim follows. If $\rk(\Phi) \geq 3$, this is also easy to achieve: Choose another simple root $\gamma$ which is adjacent to $\beta$ and non-adjacent to $\alpha$ and set $H_{\alpha, \beta}(R) = \mc{H}_\gamma$. For the general case, pick integers $p, q \in \Z \backslash \set{0}$ such that $2p - q\cdot (\alpha, \beta) = 0$ and set $h(u) := h_\beta(u)^{-q} h_\alpha(u)^p$ and $H_{\alpha, \beta}(R) := \gera{\set{h(u) \tq u \in R^\times}} \leq \mc{H}$. By Steinberg's relations \bref{SteinbergRel}, we have that $H_{\alpha, \beta}(R)$ centralizes $\mf{X}_\alpha$. A simple computation shows that $n(\alpha, \beta) := p\cdot (\beta, \alpha) - 2q \neq 0$, and the result follows again from \bref{SteinbergRel}.
\end{proof}

From the proof of Lemma \ref{derTrick} one can see that, in many cases, $n(\alpha, \beta)$ can be taken to be $\pm 1$ or $\pm 2$, though this needs not occur in general. Furthermore, the torus $H_{\alpha, \beta}(R)$ need not be unique, so the integers $n(\alpha, \beta)$ may vary. Clearly $n(-\alpha, \beta) = -n(\alpha, \beta)$.

\begin{dfn} \label{toralcte}
 Given a base ring $R$ and two roots $\alpha, \beta \in \Phi$, we define their \emph{toral constant} to be 
 \[
 c_{\alpha, \beta}(R) = \min \set{ | n(\alpha, \beta) | \tqalt H_{\alpha, \beta}(R) \mbox{ and } n(\alpha, \beta) \mbox{ are as in Lemma \ref{derTrick}}}. 
 \]
 The \emph{toral constant of} $\Phi$ \emph{and} $R$ is defined as $c_\Phi(R) = \max \set{c_{\alpha, \beta}(R) \tq \alpha, \beta \in \Phi}$.
\end{dfn}

The toral constant shall be used soon in order to define an appropriate (finite) generating set for $R$ as a ring. We now proceed with some notation and remarks necessary to construct our presentations.

Fix $A \subseteq \Mult(R) = R^\times$ a generating set for the multiplicative (abelian) group of units of $R$. Since the parabolics of $\ueCD(R)$ are finitely generated, so are its torus $\H$ and its Borel subgroup $\B_\Phi(R)$ and we may thus assume that $A$ is finite, say $A = \set{v_1, \ldots, v_\xi}$. Looking at the action of $\H$ on each root subgroup \bref{SteinbergRel} and from the finiteness of $A$, we conclude that the Borel subgroup in type $A_1$, i.e. $\Bzero \leq E_{A_1}^{sc}(R)$, must also be finitely generated. Now, $\Bzero$ is isomorphic to the semi-direct product $\Addi(R) \rtimes \Mult(R)$, where the action is given by
\[
\xymatrix@R=2mm{
 \Mult(R) \times \Addi(R) \ar[r] & \Addi(R) \\
 R^\times \times R \ni (u,r) \ar@{|->}[r] & u^2r.
}
\]
We may therefore choose a finite set $T_0 = \set{x_0 = 1, x_1, \ldots, x_\nu} \subseteq R$ such that $R^\times \cdot T_0 := \set{ux_i \tq 0 \leq i \leq \nu \mbox{ and } u \in \gera{A}}$ additively generates $R$. Given a positive integer $c \in \N$, let $A^{[c]}$ denote the set of monomials $\set{ v_1^{\veps_1} \cdots v_\xi^{\veps_\xi} \tq -c \leq \veps_j \leq c ~\forall j}$ over $A$ with powers of the letters bounded by $\pm c$ (notice that $1 \in A^{[c]}$). Using the action of $R^\times$ on $R$ given above, we have that $A^{[2]}\cdot T_0$ generates $R$ as a $\Z[R^\times]$-module. Setting $c_\Phi := c_\Phi(R) \in \N$ the toral constant of $\Phi$ and $R$, one still has that $\til{T} := A^{[c_\Phi]}\cdot T_0$ is a generating set for $R$ as a $\Z[R^\times]$-module. Hence, the set $T := \gera{A^{[c_\Phi]}} \cdot T_0$ additively generates the ring $R$.

Using the notation above, for every pair $x_i^m, x_j^n$ with $x_i, x_j \in T_0$ and $m, n \in \N$, we fix an expression $p(x_i^m,x_j^n) = \sum_{l=0}^\nu a_l v_l x_l$ in terms of $T$, where the $v_l \in \gera{A^{[c_\Phi]}}$ are \emph{uniquely} determined and $a_l \in \Z$, for the product $x_i^m x_j^n$. That is, the image of $p(x_i^m, x_j^n)$ in $R$ equals the product $x_i^m x_j^n$. Since $\gera{A^{[c_\Phi]}} \cdot T_0$ additively generates $R$ we can, given $r, s \in R$, extend the product $p(x_i^m,x_j^n)$ distributively and $\Z[R^\times]$-linearly in order to obtain a unique expression $p(r^m,s^n)$ in terms of $T$ for $r^m s^n \in R$ after decompositions of both $r$ and $s$ in terms of $T$. In particular, this product map $p(\cdot,\cdot)$ has the following properties.
\begin{equation} \label{produtoehmulti}
u \cdot p(r^m, s^n) = p(u \cdot r^m, s^n) = p(r^m, u \cdot s^n) \mbox{ for all } u \in \gera{A^{[c_\Phi]}},
\end{equation}
\begin{equation} \label{produtoehaddi}
p(r,s) = p(r_0, s) + p(r', s) = p(r, s_0) + p(r, s')
\end{equation}
whenever $r = r_0 + r'$ and $s = s_0 + s'$. Because $\gera{A^{[c_\Phi]}} = R^\times$ is a finitely generated abelian group, we can furthermore decompose the units $v_l$ occurring in $p$ above \emph{uniquely} as $v_l = w_l^{2k_l}u_l$ for some $w_l \in \gera{A^{[c_\Phi]}}, k_l \in \Z$ and $u_l \in A^{[c_\Phi]}$. In this way, $p(r^m, s^n)$ becomes
\begin{equation} \label{produtogeradores}
p(r^m, s^n) = \sum_{l=0}^\nu a_l w_l^{2k_l} u_l x_l, \mbox{ where } a_l \in \Z, w_l \in \gera{A^{[c_\Phi]}}, u_l x_l \in \til{T}.
\end{equation}
For a pair of roots $\gamma, \eta \in \Phi$, powers $r^m, s^n$ and the fixed expression $p(r^m, s^n)$ above, we define the formal expression
\begin{equation} \label{produtaozao}
\zeta(\gamma,\eta,r^m, s^n) = \prod_{l = 0}^\nu h_{m\gamma+n\eta}(w_l)^{k_l} x_{m\gamma + n\eta}(u_l x_l)^{a_l C^{\gamma,\eta}_{m,n}} h_{m\gamma+n\eta}(w_l)^{-k_l},
\end{equation}
the $C^{\gamma,\eta}_{m,n}$ being the structure constants from the commutator formula \bref{ChevalleyRel}.

In what follows we shall make repeated use of the following easy commutator identity.

\begin{lem} \label{comutinho}
Let $G$ be a group and let $a,b,c \in G$. Then
\[
[ab,c] = a[b,c]a^{-1}[a,c].
\]
\end{lem}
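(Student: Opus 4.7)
The plan is to verify the identity by straightforward expansion of both sides using the definition of commutator $[x,y] = xyx^{-1}y^{-1}$, and then compare the resulting words in $G$. This is a purely formal manipulation, so no structural theory is needed.

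Concretely, I would first expand the left-hand side:
\[
[ab,c] = (ab)c(ab)^{-1}c^{-1} = abcb^{-1}a^{-1}c^{-1}.
\]
Then I would expand the right-hand side, being careful to multiply the factors in the given order:
\[
a[b,c]a^{-1}[a,c] = a(bcb^{-1}c^{-1})a^{-1}(aca^{-1}c^{-1}).
\]
The product $a^{-1} \cdot a$ in the middle cancels, leaving $abcb^{-1}c^{-1} \cdot ca^{-1}c^{-1}$, and the further cancellation $c^{-1} \cdot c = 1$ gives $abcb^{-1}a^{-1}c^{-1}$, which matches the left-hand side.

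There is no real obstacle here; the identity is a universal fact about free groups, and the only thing to watch is the order of factors (since $G$ is not assumed abelian). I would present the proof as a single line of equalities, perhaps inserting a parenthesization hint to make the cancellation of the inner $a^{-1}a$ and $c^{-1}c$ transparent to the reader.
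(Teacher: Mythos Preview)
Your proposal is correct; the paper itself does not include a proof of this lemma, simply calling it an ``easy commutator identity'' and leaving the verification to the reader. Your direct expansion is exactly the standard check one would expect.
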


The next observation is our key starting point. When we construct our finite presentations, it will allow us to discard most of the commutator relations occurring in the unipotent radical of $\P_I(R)$.

\begin{lem} \label{DERTrick}
 Let $R, T$ and $\til{T}$ be as above, let $\gamma_0, \eta_0 \in \Phi$ be distinct roots of the same sign and let $\mf{X}_{\gamma_0} = \gera{ \set{x_{\gamma_0}(r) \tq r \in R}}$ and $\mf{X}_{\eta_0} = \gera{\set{x_{\eta_0}(s) \tq s \in R}}$ be the corresponding unipotent root subgroups in $\uCD(R)$. Then the Chevalley commutator formula
 \[
[x_\gamma(r), x_\eta(s)] = \begin{cases}\displaystyle \underset{m, n > 0}{\prod_{m\gamma + n\eta \in \Phi}} \left( x_{m\gamma+n\eta}(r^m s^n)^{C^{\gamma,\eta}_{m,n}} \right), & \mbox{if } \gamma + \eta \in \Phi;\\ 1 & \mbox{otherwise}, \end{cases}
\]
for $\gamma, \eta \in \Phi_{\set{\gamma_0, \eta_0}}^+$ distinct, and Steinberg's conjugation relations
\[
h_\beta(u) x_\alpha(r) h_\beta(u)^{-1} = x_\alpha(u^{(\alpha, \beta)}r)
\]
 in $\gera{\H_\delta,~ \mf{X}_\delta \tqalt \delta \in \Phi_{\set{\gamma_0, \eta_0}}^+} \leq \uCD(R)$ are consequences of the following relations. \\

For all $\alpha \in \Phi_{\set{\gamma_0, \eta_0}}^+$,
\begin{equation} \label{defrelstoro}
\prod_{i=1}^{\xi} h_\alpha(v_i)^{\veps_i} = 1
\end{equation}
for each arbitrary, but fixed, defining relator $v_1^{\veps_1} \cdots v_\xi^{\veps_\xi} = 1$ of $\gera{A^{[c_\Phi]}}$ as an abelian group;

For all $u, v \in A$ and $\alpha, \beta \in \Phi_{\gamma_0, \eta_0}^+$,
\begin{equation} \label{toroehabeliano}
 h_\alpha(v) \mbox{ and } h_\beta(u) \mbox{ commute};
\end{equation}

For all $v \in A,~ u x_i \in \til{T}$ and $\alpha, \beta, \gamma \in \Phi_{\set{\gamma_0, \eta_0}}^+$,
\begin{equation} \label{TrickT}
 h_\alpha(v) x_\gamma(u x_i) h_\alpha(v)^{-1} = h_\beta(v)^k x_\gamma(u' x_i) h_\beta(v)^k,
\end{equation}
where $k \in \Z$ and $u' \in A^{[c_\Phi]}$ are unique such that $v^{(\gamma,\alpha)}u = v^{k(\gamma,\beta)}u'$;

For all $r, s \in R$ and $\gamma \in \Phi_{\set{\gamma_0, \eta_0}}^+$,
\begin{equation} \label{faltouabeliano}
x_\gamma(r) \mbox{ and } x_\gamma(s) \mbox{ commute};
\end{equation}

For all $t_1, t_2 \in \til{T}$,
 \begin{equation} \label{TrickU}
[x_\gamma(t_1), x_\eta(t_2)] =
\begin{cases}
\displaystyle \underset{m\gamma + n\eta \in \Phi}{\prod_{m, n > 0}} \zeta(\gamma,\eta,t_1^m, t_2^n), & \mbox{if } \gamma + \eta \in \Phi; \\
1 & \mbox{otherwise},
\end{cases}
\end{equation}
where $p(t_1^m, t_2^n)$ is a fixed expression for the product $t_1^m t_2^n$ in terms of $T$ as described in \bref{produtogeradores} and $\zeta$ is as in \bref{produtaozao}.
\end{lem}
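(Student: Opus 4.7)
The plan is to derive the full Chevalley commutator formula and Steinberg conjugation relations by bootstrapping from the restricted forms \bref{TrickT} and \bref{TrickU}, using commutator calculus together with the additive/multiplicative decomposition of $R$ and $R^\times$ encoded in $T$, $\til{T}$ and $A^{[c_\Phi]}$. The natural order is to derive Steinberg first, then to convert the $\zeta$-expression of \bref{TrickU} into the standard Chevalley shape using Steinberg, and finally to promote the latter to arbitrary ring elements.

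For the Steinberg conjugation $h_\alpha(v) x_\gamma(r) h_\alpha(v)^{-1} = x_\gamma(v^{(\gamma, \alpha)} r)$, I would start at the generator level. The relation \bref{TrickT} identifies conjugation by $h_\alpha(v)$ on $x_\gamma(u x_i)$ with a ``transferred'' conjugation by $h_\beta(v)^k$ on $x_\gamma(u' x_i)$, arranged so that $v^{(\gamma, \alpha)} u = v^{k(\gamma, \beta)} u'$ and $u' \in A^{[c_\Phi]}$. The idea is to apply \bref{TrickT} repeatedly, moving the conjugating torus element among the available semi-simple root subgroups so as to drive the exponent $k$ to zero; the very definition of the toral constant $c_\Phi$ is what guarantees that such a reduction can be carried out within $A^{[c_\Phi]}$. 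Once the full Steinberg identity is established for $v \in A$ and $r = u x_i \in \til{T}$, I would extend it in two steps: (i) to $v \in \gera{A^{[c_\Phi]}} = R^\times$ by using \bref{defrelstoro} and \bref{toroehabeliano} to check that $h_\alpha$ is a homomorphism on the torus, and (ii) to arbitrary $r \in R$ by expanding $r$ as a $\Z$-combination of elements of $\gera{A^{[c_\Phi]}} \cdot T_0$ and invoking \bref{faltouabeliano} to commute and distribute factors under conjugation.

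For the Chevalley formula, I would first observe that, once Steinberg holds, the product $\zeta(\gamma, \eta, t_1^m, t_2^n)$ appearing in \bref{TrickU} collapses to the standard term $x_{m\gamma+n\eta}(t_1^m t_2^n)^{C^{\gamma,\eta}_{m,n}}$ when $t_1, t_2 \in \til{T}$, because $p(t_1^m, t_2^n) = \sum_l a_l w_l^{2k_l} u_l x_l$ and Steinberg converts $h_\delta(w_l)^{k_l} x_\delta(u_l x_l) h_\delta(w_l)^{-k_l}$ into $x_\delta(w_l^{2k_l} u_l x_l)$. To promote this from generators to arbitrary $r, s \in R$, I would decompose $r = \sum_i a_i u_i t_i$ and $s = \sum_j b_j w_j s_j$ with $u_i, w_j \in \gera{A^{[c_\Phi]}}$, $t_i, s_j \in T_0$, and then apply Lemma \ref{comutinho}---$[ab,c] = a[b,c]a^{-1}[a,c]$---repeatedly to break $[x_\gamma(r), x_\eta(s)]$ into a product of commutators between generators, each of which is handled by \bref{TrickU} raised to an appropriate integer power through \bref{faltouabeliano}. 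The conjugations introduced by Lemma \ref{comutinho} are absorbed via the Steinberg relations just established, and the properties \bref{produtoehmulti}--\bref{produtoehaddi} of $p(\cdot, \cdot)$ ensure that, after recombination, the resulting product of $x_{m\gamma + n\eta}$-terms equals the right-hand side of the Chevalley formula with argument $r^m s^n$.

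The main obstacle is the derivation of the full Steinberg relation from \bref{TrickT}: the latter does not directly express $h_\alpha(v) x_\gamma(u x_i) h_\alpha(v)^{-1}$ as a pure unipotent element $x_\gamma(v^{(\gamma, \alpha)} u x_i)$, but only equates it with another conjugate. Circumventing this requires careful iteration, choosing the auxiliary root $\beta$ at each step so as to shrink $|k|$ while keeping $u' \in A^{[c_\Phi]}$; the bounded nature of $A^{[c_\Phi]}$ is exactly what forces the procedure to terminate. By contrast, once Steinberg is in hand, the Chevalley derivation amounts to systematic bookkeeping with Lemma \ref{comutinho} and the additive decomposition of $R$, so it poses no conceptual difficulty---only notational weight.
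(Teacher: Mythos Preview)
Your reading of the Steinberg step inverts the difficulty. In this lemma, $x_\gamma(r)$ for $r \in T \setminus \til{T}$ is not a primitive symbol: the paper \emph{defines} $x_\gamma(w^{2k} u x_i) := h_\gamma(w)^k x_\gamma(u x_i) h_\gamma(w)^{-k}$ at the outset of the proof. With that convention, specializing \bref{TrickT} to $\beta = \gamma$ (so $(\gamma,\beta)=2$) already reads
\[
h_\alpha(v)\, x_\gamma(u x_i)\, h_\alpha(v)^{-1} \;=\; h_\gamma(v)^k\, x_\gamma(u' x_i)\, h_\gamma(v)^{-k} \;=\; x_\gamma(v^{2k} u' x_i) \;=\; x_\gamma(v^{(\gamma,\alpha)} u x_i),
\]
which is Steinberg at the generator level without any iteration. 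Your proposed ``drive $k$ to zero'' descent has no visible termination mechanism, and the toral constant $c_\Phi$ does not serve the purpose you assign to it: it is chosen so that the transferred unit $u'$ stays inside $A^{[c_\Phi]}$, not to shrink $k$. The extension to arbitrary $v \in R^\times$ and $r \in R$ then proceeds as you say, via \bref{defrelstoro}, \bref{toroehabeliano} and \bref{faltouabeliano}.

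The real gap is in your promotion of the Chevalley formula from $\til{T}$ to arbitrary $r,s \in R$. When you expand $[x_\gamma(r), x_\eta(s)]$ with Lemma~\ref{comutinho}, the conjugating factors ``$a$'' are not torus elements: they are products of the form $h_\gamma(w)\, x_\gamma(u x_i)\, h_\gamma(w)^{-1}$. Steinberg lets you strip the $h_\gamma(w)$, but you are then left conjugating the inner commutator---a product of $x_{m\gamma+n\eta}(\cdot)$'s---by a genuine unipotent element $x_\gamma(u x_i)$. That is exactly a Chevalley relation for the pair $(\gamma,\, m\gamma+n\eta)$, which you do not yet have, so the argument is circular as stated. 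The paper breaks this circularity in two moves you omit: first, it uses the centralizing-subtorus trick of Lemma~\ref{derTrick} to rewrite $h_\gamma(w)^k x_\gamma(u x_i) h_\gamma(w)^{-k}$ as $h_\alpha(w)^{k_0} x_\gamma(u_0 x_i) h_\alpha(w)^{-k_0}$ with $h_\alpha$ centralizing $\mf{X}_\eta$ (Step~1), so that the torus part can be pulled across the other root subgroup; second, it runs a \emph{reverse induction on the height} of $\gamma+\eta$ (Steps~3--4), because at maximal height the residual $x_\gamma$-conjugate of $x_{\gamma+\eta}(\cdot)$ is trivial and one descends from there. Without one of these devices your expansion via Lemma~\ref{comutinho} does not close up.
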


\begin{proof}
First of all, some clarification. To say that the commutator formula and conjugation relations follow from the relations given above means that, if one can write the $h_\delta(u),~ u \in R^\times$, and $x_\delta(r),~ r \in R$ and $\delta \in \Phi_{\set{\gamma_0, \eta_0}}^+$, as (appropriate) products of elements for which the given relations \bref{defrelstoro} -- \bref{TrickU} hold, then the commutator formula and the conjugation relations are in fact formal consequences of \bref{defrelstoro} -- \bref{TrickU}. From the previous discussion on the generating set for $R$, we know that every element $r \in R$ decomposes as
\[
r = \sum_{l = 0}^{\nu}a_l w_l^{2k_l} u_l x_l, \mbox{ where } a_l \in \Z, w_l \in \gera{A^{[c_\Phi]}}, u_l x_l \in \til{T}.
\]
We may then write
\[
x_\gamma(r) = \prod_{l = 0}^{\nu} h_\gamma(w_l)^{k_l} x_\gamma(u_l x_l)^{a_l} h_\gamma(w_l)^{-k_l}.
\]
If furthermore $w_l = v_1^{n_1} \cdots v_\xi^{n_\xi}$, then $h_\gamma(w_l) = h_\gamma(v_1)^{n_1} \cdots h_\gamma(v_\xi)^{n_\xi}$. We now break the proof into several steps.

\underline{Step 0.} Steinberg's relations hold. Moreover,
\begin{equation} \label{produtonotoro}
\left(\prod_{i=1}^n h_{\beta_i}(u_i)^{\veps_i}\right) x_\alpha(r) \left(\prod_{i=1}^n h_{\beta_i}(u_i)^{\veps_i}\right)^{-1} = 
x_\alpha\left(r\prod_{i=1}^n u_i^{\veps_i \cdot (\alpha,\beta_i)}\right).
\end{equation}
Indeed, the relations \bref{defrelstoro} and \bref{toroehabeliano} together imply that each subgroup $\H_\alpha = \langle \{ h_\alpha(v) \tq v \in A \} \rangle \leq \langle \H_\delta,~ \mf{X}_\delta \tqalt \delta \in \Phi_{\set{\gamma_0, \eta_0}}^+ \rangle$ is isomorphic to $R^\times$ and the whole torus $\langle \H_\delta \tqalt \delta \in \Phi_{\set{\gamma_0, \eta_0}}^+ \rangle$ is in fact abelian. This combined with \bref{TrickT} yield Steinberg's relations. Equation \bref{produtonotoro} is just an iterated application of said relations. In particular, the following identity holds.
\begin{equation} \label{produtonotoro2}
\left(\prod_{i=1}^n h_{\beta}(u_i)^{\veps_i}\right) x_\alpha(r) \left(\prod_{i=1}^n h_{\beta}(u_i)^{\veps_i}\right)^{-1} = 
\end{equation}
\[
h_\beta\left(\prod_{i=1}^n u_i^{\veps_i}\right) x_\alpha(r) h_\beta\left(\prod_{i=1}^n u_i^{\veps_i}\right)^{-1} =
\]
\[
x_\alpha\left(r\prod_{i=1}^n u_i^{\veps_i \cdot (\alpha,\beta)}\right).
\]

\underline{Step 1.} Let $r = w^{2k} u x_i, s = z^{2l} v x_j \in T$. Then the commutator formula holds. 

To prove this, we identify $x_\gamma(r) = h_\gamma(w)^k x_\gamma(u x_i) h_\gamma(w)^{-k}$ and $x_\eta(s) = h_\eta(z)^l x_\eta(v x_j) h_\eta(z)^{-l}$. By Lemma \ref{derTrick}, there exist roots $\alpha, \beta \in \Phi_{\set{\gamma_0, \eta_0}}^+$ such that $(\gamma, \alpha) \neq 0 \neq (\eta, \beta)$ and $(\eta, \alpha) = 0 = (\gamma, \beta)$. Choose $u_0, v_0 \in A^{[c_\Phi]}$ and $k_0, l_0 \in \Z$ such that $w^{2k} u = w^{k_0(\gamma,\alpha)}$ and $z^{2l} v=z^{l_0(\eta,\beta)} v_0$. By \bref{TrickT},
\[
x_\gamma(r) = h_\alpha(w)^{k_0} x_\gamma(u_0 x_i) h_\alpha(w)^{-k_0} \mbox{ and } x_\eta(s) = h_\beta(z)^{l_0} x_\eta(v_0 x_j) h_\beta(z)^{-l_0}.
\]
Write $p(x_i^m, x_j^n) = \sum_{e = 0}^\nu a_e w_e^{2k_e} u_e x_e$ as in \bref{produtogeradores}. By property \bref{produtoehmulti} we obtain
\[
p(r^m, s^n) = p((w^{2k} u x_i)^m, (z^{2l} v x_j)^n) = \sum_{e = 0}^\nu a_e (w^{2km} u^m z^{2ln} v^n w_e^{2k_e} u_e) x_e
\]
and
\[
p((u_0 x_i)^m, (v_0 x_j)^n) = \sum_{e = 0}^\nu a_e (u_0^m v_0^n w_e^{2k_e} u_e) x_e.
\]
For every $e$ let $\til{z}_e, \til{w}_e \in \gera{A}^{[c_\Phi]},~ \til{v}_e, \til{u}_e \in A^{[c_\Phi]}$ and $\til{l}_e, \til{k}_e \in \Z$ be unique such that
\[
w^{2km} u^m z^{2ln} v^n w_e^{2k_e} u_e = \til{z}_e^{2\til{l}_e} \til{v}_e \mbox{ and } u_0^m v_0^n w_e^{2k_e} u_e = \til{w}_e^{2\til{k}_e} \til{u}_e.
\]
By definition of the expression \bref{produtaozao} we have, on the one hand,
\[
[x_\gamma(r), x_\eta(s)] =
\]
\[
\begin{cases}
\displaystyle \underset{m, n > 0}{\prod_{m\gamma + n\eta \in \Phi}} \prod_{e=0}^\nu h_{m\gamma + n\eta}(\til{z}_e)^{\til{l}_e} x_{m\gamma + n\eta}(\til{v}_e x_e)^{a_e \cdot C_{m,n}^{\gamma, \eta}} h_{m\gamma + n\eta}(\til{z}_e)^{-\til{l}_e}, \mbox{ if } \gamma + \eta \in \Phi; \\
1 \mbox{ otherwise}.
\end{cases}
\]
On the other hand,
\[
[x_\gamma(r),x_\eta(s)] = h_\alpha(w)^{k_0}h_\beta(z)^{l_0} [x_\gamma(u_0 x_i), x_\eta(v_0 x_j)] h_\beta(z)^{-l_0}h_\alpha(w)^{-k_0} =
\]
\[
\begin{cases}
\displaystyle \underset{m\gamma + n\eta \in \Phi}{\prod_{m, n > 0}} h_\alpha(w)^{k_0}h_\beta(z)^{l_0} \zeta(\gamma,\eta,(u_0 x_i)^m, (v_0 x_j)^n) h_\beta(z)^{-l_0}h_\alpha(w)^{-k_0}, \\
 \mbox{if } \gamma + \eta \in \Phi; \\
1 \mbox{ otherwise},
\end{cases}
\]
and
\[
\zeta(\gamma,\eta,(u_0 x_i)^m, (v_0 x_j)^n) =
\]
\[
\displaystyle \prod_{e=0}^\nu h_{m\gamma + n\eta}(\til{w}_e)^{\til{k}_e} x_{m\gamma + n\eta}(\til{u}_e x_e)^{a_e \cdot C_{m,n}^{\gamma, \eta}} h_{m\gamma + n\eta}(\til{w}_e)^{-\til{k}_e}.
\]
Pick $\til{u}'_e, \til{u}''_e \in A^{[c_\Phi]}$ and $l'_0, k'_0 \in \Z$ such that $z^{l_0 \cdot (\eta, \beta) \cdot n} \til{u}_e = z^{2 l_0'} \til{u}_e'$ and $w^{k_0 \cdot (\gamma, \alpha) \cdot m} \til{u}_e' = w^{2 k_0'} \til{u}_e''$. By \bref{TrickT}, \bref{produtonotoro} and \bref{produtonotoro2}, the equation above becomes
\[
h_\alpha(w)^{k_0}h_\beta(z)^{l_0} [x_\gamma(u_0 x_i), x_\eta(v_0 x_j)] h_\beta(z)^{-l_0}h_\alpha(w)^{-k_0} =
\]
\[
\begin{cases}
\displaystyle \underset{m\gamma + n\eta \in \Phi}{\prod_{m, n > 0}} \prod_{e=0}^\nu h_{m\gamma + n\eta}(\til{w}_e^{\til{k}_e} z^{2 l_0'} w^{2 k_0'}) x_{m\gamma + n\eta}(\til{u}_e'' x_e)^{a_e \cdot C_{m,n}^{\gamma, \eta}} h_{m\gamma + n\eta}(\til{w}_e^{\til{k}_e} z^{2 l_0'} w^{2 k_0'})^{-1}, \\
\mbox{if } \gamma + \eta \in \Phi; \\
1 \mbox{ otherwise},
\end{cases}
\]
But
\begin{align*}
\til{w}_e^{2\til{k}_e} z^{2 l_0'} w^{2 k_0'} \til{u}_e'' & = \til{w}_e^{2\til{k}_e} w^{k_0 \cdot (\gamma, \alpha) \cdot m} z^{2 l_0'} \til{u}_e'\\
& = w^{k_0 \cdot (\gamma, \alpha) \cdot m} z^{l_0 \cdot (\eta, \beta)\cdot n} \til{w}_e^{2\til{k}_e} \til{u}_e\\
& = w^{k_0 \cdot (\gamma, \alpha) \cdot m} u^m z^{l_0 \cdot (\eta, \beta)\cdot n} v^n w^{2k_e} u_e\\
& = \til{z}^{2 \til{l}_e} \til{v}_e,
\end{align*}
so the claim follows from \bref{TrickT} and \bref{produtonotoro}.

\underline{Step 2.} The commutator relation holds whenever $\gamma + \eta \notin \Phi$.

Write $r = \sum_{l=0}^\nu a_l w_l^{2k_l} u_l x_l$ and $s = \sum_{l=0}^\nu b_l z_l^{2j_l} v_l x_l$ and let $N := \sum_{l=0}^\nu (|a_l|+|b_l|)$. We proceed by induction on $N$. We observe that Step 1 gives the base case. We have that
\[
[x_\gamma(r),x_\eta(s)] =
\]
\[
= \left[h_\gamma(w_0)^{k_0} x_\gamma(u_0 x_0)^{a_0} h_\gamma(w_0)^{-k_0} \prod_{l=1}^\nu h_\gamma(w_l)^{k_l} x_\gamma(u_l x_l)^{a_l} h_\gamma(w_l)^{-k_l} \right.
\]
\[
\left., \prod_{l=0}^\nu h_\eta(z_l)^{j_l} x_\eta(v_l x_l)^{b_l} h_\eta(z_l)^{-j_l}\right] =
\]
\begin{align*}
h_\gamma(w_0)^{k_0} x_\gamma(u_0 x_0)^{a_0} h_\gamma(w_0)^{-k_0} \left[ \prod_{l=1}^\nu h_\gamma(w_l)^{k_l} x_\gamma(u_l x_l)^{a_l} h_\gamma(w_l)^{-k_l}, \right. & \\
\left. \prod_{l=0}^\nu h_\eta(z_l)^{j_l} x_\eta(v_l x_l)^{b_l} h_\eta(z_l)^{-j_l} \right] h_\gamma(w_0)^{k_0} x_\gamma(u_0 x_0)^{-a_0} h_\gamma(w_0)^{-k_0} \times &
\end{align*}
\[
\times \left[ h_\gamma(w_0)^{k_0} x_\gamma(u_0 x_0)^{a_0} h_\gamma(w_0)^{-k_0}, \prod_{l=0}^\nu h_\eta(z_l)^{j_l} x_\eta(v_l x_l)^{b_l} h_\eta(z_l)^{-j_l} \right].
\]
By induction hypothesis, we have commutator relations for the terms in the commutators above, which in the case $\gamma + \eta \notin \Phi$ mean that all commutators above vanish, thus proving the claim.

\underline{Step 3.} The commutator relations hold whenever the height $ht(\gamma+\eta)$ is maximal in $\Phi_{\set{\gamma_0, \eta_0}}^+$.

This is very similar to the previous case. Recall that the height of a root $\delta = a \gamma_0 + b \eta_0 \in \Phi_{\set{\gamma_0, \eta_0}}^+$ is the integer $a + b$. Take expressions for $r$ and $s$ as before and define $N$ in the same way. Again, Step 1 gives the base case. Proceed as before to obtain the exact last expression above. By induction hypothesis, we have commutator relations for the terms in the commutators above, so the considered expression becomes
\[
h_\gamma(w_0)^{k_0} x_\gamma(u_0 x_0)^{a_0} h_\gamma(w_0)^{-k_0} x_{\gamma+\eta}(r' s) \times 
\]
\[
\times h_\gamma(w_0)^{k_0} x_\gamma(u_0 x_0)^{-a_0} h_\gamma(w_0)^{-k_0} x_{\gamma + \eta}(u_0 x_0 s),
\]
where $r' \in R$ is represented (via $p$ and $\zeta$) by the double product in the left hand side commutator. Now, by Lemma \ref{derTrick} and \bref{TrickT}, we may rewrite
\[
h_\gamma(w_0)^{k_0} x_\gamma(u_0 x_0)^{a_0} h_\gamma(w_0)^{-k_0} = h_\alpha(w_0)^{k'_0} x_\gamma(u'_0 x_0)^{a_0} h_\alpha(w_0)^{-k'_0},
\]
where $(\gamma + \eta, \alpha) = 0$. In particular, $h_\alpha(w_0)$ commutes with $x_{\gamma + \eta}(r' s)$. But, since $ht(\gamma + \eta)$ is maximal, $x_\gamma(u'_0 x_0)$ also commutes with $x_{\gamma + \eta}(r' s)$. Thus, $h_\gamma(w_0)^{k_0} x_\gamma(u_0 x_0)^{a_0} h_\gamma(w_0)^{-k_0}$ vanishes from the expression above and the claim follows by linearly expanding the expression $p$ for the products above and repeatedly applying \bref{produtoehmulti}, \bref{produtoehaddi} together with \bref{faltouabeliano}.

\underline{Step 4.} We can now finish the proof of the lemma. Notice that Step 3 implies the result for simply-laced root systems. Suppose $\gamma + \eta \in \Phi$ and $ht(\gamma + \eta)$ is not maximal. Again taking expressions for $r$ and $s$ and defining $N$ as in Step 2, proceed by induction on $N$ and reverse induction on $ht(\gamma + \eta)$, that is, we first prove the result on roots of maximal height and then descend to minimal roots. Without loss of generality, assume $\gamma$ is long and $\eta$ is short. The base case consists of Steps 1 and 3. Assuming the commutator relations hold for roots of all heights at least $ht(\gamma + \eta)$, we may proceed analogously as in the last part of Step 3 because $\gamma$ is long. The lemma follows.
\end{proof}

Apart from the remaining commutator relations, we will have to deal with the additive relations coming from $R$. We chose $\til{T} = A^{[c_\Phi]}\cdot T_0$ as generating set for $R$ as a ring and $T = \gera{A^{[c_\Phi]}}\cdot T_0$ as the corresponding generating set for $\Addi(R)$, so we may fix $\mc{A}$ an arbitrary set of \emph{additive defining relators} for $R$ such that every expression $a \in \mc{A}$ is of the form
\[
a = \sum_{l = 0}^{\nu}a_l w_l^{2k_l} u_l x_l, \mbox{ where } a_l \in \Z, w_l \in \gera{A^{[c_\Phi]}}, u_l x_l \in \til{T}.
\]
In other words, $\mc{A}$ is a fixed subset of $\bigoplus_{x \in T_0} \Z[\gera{A^{[c_\Phi]}}]\cdot x$, with elements given in the form above, and with the property that
\[
\Addi(R) \cong \displaystyle\frac{\bigoplus_{x \in T_0} \Z[\gera{A^{[c_\Phi]}}]\cdot x}{\vspan(\mc{A})}.
\]

\subsection{Proof of Theorem \texorpdfstring{\ref{avolta}}{4.1} for \texorpdfstring{$I = \vazio$}{I empty}} \label{provaBorel} Recall that the Levi decomposition for $\mcB_\Phi(R)$ is just $\mcB_\Phi(R) = \U_\vazio(R) \rtimes \mc{H}$, where $\mc{H} = \gera{\mc{H}_\alpha \tqalt \alpha \in \Phi}$ is the standard torus and $\U_\vazio(R) = \gera{\mf{X}_\gamma \tqalt \gamma \in \Phi^+}$.

Consider the following sets of relations. For all $\alpha \in \Phi, \gamma \in \Phi^+, v \in A, t \in T$,
\begin{equation} \label{BorelWT}
 h_\alpha(v) x_\gamma(t) h_\alpha(v)^{-1} = x_\gamma(v^{(\gamma,\alpha)}t).
\end{equation}
For all $t_1, t_2 \in T, \gamma, \eta \in \Phi^+,$
 \begin{equation} \label{BorelU}
[x_\gamma(t_1), x_\eta(t_2)] =
\begin{cases}
\displaystyle \prod_{m\gamma + n\eta \in \Phi^+} \zeta(\gamma,\eta,t_1^m, t_2^n), & \mbox{if } \gamma + \eta \in \Phi; \\
1 & \mbox{otherwise}, \end{cases}
\end{equation}
where $p(t_1^m, t_2^n)$ is a fixed expression for the product $t_1^m t_2^n$ in terms of $\gera{A^{[c_\Phi]}}$ and $\til{T}$ as described in \bref{produtogeradores} and $\zeta$ is as in \bref{produtaozao}.\\
For all $a = \sum_{l = 0}^{\nu}a_l w_l^{2k_l} u_l x_l \in \mc{A}$ and $\gamma \in \Phi^+$,
\begin{equation} \label{BorelADD}
\prod_{l = 0}^{\nu} x_\gamma(w_l^{2k_l} u_l x_l)^{a_l} = 1.
\end{equation}
Let $\mc{S}_\mcB$ be the set of all relations \bref{BorelWT}, \bref{BorelU} and \bref{BorelADD} given above.

Since the torus $\mc{H}$ is a finitely generated abelian group, we may fix a presentation
\[
\mc{H} \cong \gera{\set{h_\alpha(v) \tq \alpha \in \Phi, v \in A} \tq \mc{T}},
\]
where $\mc{T}$ is finite. Combining this with Lemma \ref{presunip} and the given descriptions of $R$, $\mc{A}$ and $\mc{S}_\mcB$, we obtain a presentation
\begin{equation} \label{standardpresBorel}
\mcB_\Phi(R) \cong \gera{\set{h_\alpha(v), x_\gamma(t) \tq \alpha \in \Phi, \gamma \in \Phi^+, v \in A, t \in T} \tq \mc{T} \cup \mc{S}_\mcB}.
\end{equation}

Suppose the extended Levi factors $\LE_n(R)$ of $\B_\Phi(R)$ are finitely presented. By definition, they consist just of a single unipotent root subgroup acted upon non-trivially by the standard torus. The point now is that, in fact, every subgroup $\mf{X}_\gamma \rtimes \H \leq \B_\Phi(R),~ \gamma \in \Phi^+$, is finitely presented, not just the $\LE_n(R) = \mf{X}_{\alpha_n} \rtimes \H$. This is due to the following lemma, which in turn is an easy consequence of \bref{SteinbergReflection}.

\begin{lem} \label{XHF2}
Let $\gamma$ be a positive root. Then there exist a simple root $\alpha \in \Delta$ and an isomorphism $w : \mf{X}_\gamma \rtimes \H \xrightarrow{\cong} \mf{X}_\alpha \rtimes \H$.
\end{lem}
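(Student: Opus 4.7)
The plan is to prove this via a Weyl-group twist. First I would recall the standard fact that, in an irreducible reduced root system $\Phi$, the Weyl group $W$ acts transitively on roots of a given length, and that the set $\Delta$ of simple roots contains a representative of each length occurring in $\Phi$. Consequently, for the given positive root $\gamma$, I can pick $w \in W$ and a simple root $\alpha \in \Delta$ (of the same length as $\gamma$) such that $w(\gamma) = \alpha$. Fixing a reduced expression $w = r_{\alpha_{i_1}} \cdots r_{\alpha_{i_k}}$ as a product of simple reflections, I would then form the corresponding lift $\til{w} := w_{\alpha_{i_1}} \cdots w_{\alpha_{i_k}} \in \ueCD(R)$, where each $w_{\alpha_{i_j}} = x_{\alpha_{i_j}}(1) x_{-\alpha_{i_j}}(1)^{-1} x_{\alpha_{i_j}}(1)$ is the explicit Weyl element introduced before \bref{SteinbergReflection}.

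The key step is to iterate the reflection relations \bref{SteinbergReflection}. Reading that identity for $k=1$ and specializing either $v=1$ or $s=0$, I extract two set-level statements for conjugation by a single $w_{\alpha_{i_j}}$: namely $w_{\alpha_{i_j}} \mf{X}_\beta w_{\alpha_{i_j}}^{-1} = \mf{X}_{r_{\alpha_{i_j}}(\beta)}$ for every $\beta \in \Phi$, and $w_{\alpha_{i_j}} \mc{H}_\beta w_{\alpha_{i_j}}^{-1} = \mc{H}_{r_{\alpha_{i_j}}(\beta)}$. Composing through the reduced expression, I obtain
\[
\til{w}\, \mf{X}_\beta\, \til{w}^{-1} = \mf{X}_{w(\beta)} \quad\text{and}\quad \til{w}\, \mc{H}_\beta\, \til{w}^{-1} = \mc{H}_{w(\beta)}
\]
for every $\beta \in \Phi$. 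The $\pm 1$ signs and inversions appearing in \bref{SteinbergReflection} do not disturb these set-level equalities; they only affect the precise parametrization of the individual elements, which is immaterial for an isomorphism statement.

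To conclude, since $\mc{H} = \gera{\mc{H}_\beta \tqalt \beta \in \Phi}$ and $w$ permutes $\Phi$, the second family of equalities above shows that $\til{w}$ normalizes $\mc{H}$, so conjugation by $\til{w}$ restricts to an automorphism of $\mc{H}$. The first family, specialized to $\beta = \gamma$, gives an isomorphism $\mf{X}_\gamma \xrightarrow{\cong} \mf{X}_\alpha$. Putting these together, the inner automorphism $g \mapsto \til{w} g \til{w}^{-1}$ of $\ueCD(R)$ restricts to a group isomorphism $w : \mf{X}_\gamma \rtimes \mc{H} \to \mf{X}_\alpha \rtimes \mc{H}$ which respects the semi-direct product structure, as required. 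The only point needing a small check is that the Weyl-group transitivity on roots of equal length indeed places every positive root in the $W$-orbit of a simple root; this is classical and is the only place where irreducibility of $\Phi$ and the presence of both short and long simple roots (in the non-simply-laced case) enter. No serious obstacle is expected: the entire argument is a routine consequence of \bref{SteinbergReflection}.
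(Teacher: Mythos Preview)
Your approach is correct and is essentially the same as the paper's: use the Weyl group action to send $\gamma$ to a simple root, lift to the group via the elements $w_\alpha$, and conjugate, invoking \bref{SteinbergReflection}. The only difference is that the paper asserts a \emph{single} reflection $r_\beta$ (for some root $\beta$, not necessarily simple) already maps $\gamma$ to a simple root and conjugates by the single element $w_\beta$, whereas you write a general Weyl element as a product of simple reflections and conjugate by the corresponding product $\til{w} = w_{\alpha_{i_1}}\cdots w_{\alpha_{i_k}}$. Both routes land in the same place.

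One small slip in your write-up: specializing $s=0$ in \bref{SteinbergReflection} yields the trivial identity $1=1$, not the claim $w_{\alpha}\,\mc{H}_\beta\, w_{\alpha}^{-1} = \mc{H}_{r_\alpha(\beta)}$. For that you need the separate (and equally standard) Steinberg relation $w_\alpha h_\beta(v) w_\alpha^{-1} = h_{r_\alpha(\beta)}(v)$, which is among the ``series of consequences'' alluded to just before \bref{SteinbergRel} in the paper. With that correction the argument goes through as you describe.
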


\begin{proof}
Due to the action of the Weyl group $W$ of $\Phi$, we can find a root $\beta$ and a simple root $\alpha$ such that $r_\beta(\gamma) = \alpha$, where $r_\beta \in W$ is the reflection associated to $\beta$. Let $w$ be conjugation by $w_\beta$, where $w_\beta \in \uCD(R)$ is as in the end of Section \ref{jargao}. The result follows from \bref{SteinbergReflection}.
\end{proof}

For each $\gamma \in \Phi^+$, let then $\mf{X}_\gamma \rtimes \H = \gera{\mc{X}_\gamma \tq \mc{S}_\gamma}$ be a finite presentation with generating set
\[
\mc{X}_\gamma = \set{h_{\alpha}(v), x_\gamma(t) \tq v \in A, t \in \til{T}, \alpha \in \Phi}.
\]
Let $\til{\mc{Y}}$ be the \emph{finite} set of generators
\[
\til{\mc{Y}} = \set{\til{x}_\gamma(t) \tq t\in \til{T}, \gamma \in \Phi^+}.
\]
We define the following \emph{finite} sets of relations. For all $v \in A, ux_i \in \til{T}, \alpha, \beta \in \Phi, \gamma \in \Phi^+$,
\begin{equation} \label{tilBorelWT}
 h_\alpha(v) \til{x}_\gamma(ux_i) h_\alpha(v)^{-1} = h_\beta(v)^k \til{x}_\gamma(u'x_i) h_\beta(v)^{-k},
\end{equation}
where $k \in \Z$ and $u' \in A^{[c_\Phi]}$ are unique such that $v^{(\gamma,\alpha)}u = v^{k(\gamma,\beta)}u'$.\\
For all $t_1, t_2 \in \til{T}, \gamma, \eta \in \Phi^+,$
 \begin{equation} \label{tilBorelU}
[\til{x}_\gamma(t_1), \til{x}_\eta(t_2)] =
\begin{cases}
\displaystyle \prod_{m\gamma + n\eta \in \Phi^+} \til{\zeta}(\gamma,\eta,t_1^m, t_2^n), & \mbox{if } \gamma + \eta \in \Phi; \\
1 & \mbox{otherwise}, \end{cases}
\end{equation}
where $p(t_1^m, t_2^n)$ is a fixed expression for the product $t_1^m t_2^n$ in terms of $\gera{A^{[c_\Phi]}}$ and $\til{T}$ as in \bref{produtogeradores} and $\til{\zeta}$ is obtained from $\zeta$ in \bref{produtaozao} by formally replacing $x_{m\gamma + n\eta}$ by $\til{x}_{m\gamma + n \eta}$.\\
Finally, let $\til{\mc{S}}_\mcB$ be the union of the sets $\mc{S}_\gamma$ (with $\gamma$ running over $\Phi^+$) and the sets of all relations \bref{tilBorelWT}, \bref{tilBorelU} given above. Notice that we did not add any defining relators coming from the underlying additive group $\Addi(R)$, except for those possibly contained in the $\mc{S}_\gamma$.

Let $\til{\mcB}_\Phi(R)$ be the group given by the presentation
\begin{equation} \label{prestilBorel}
\til{\mcB}_\Phi(R) \cong \gera{\set{h_\alpha(v) \tq \alpha \in \Phi, v \in A} \cup \til{\mc{Y}} \tq \mc{T} \cup \til{\mc{S}}_\mcB}.
\end{equation}
By construction, $\til{\mcB}_\Phi(R)$ is finitely presented. We claim that $\til{\mcB}_\Phi(R) \cong \mcB_\Phi(R)$. Consider the map $h_\alpha(v) \mapsto h_\alpha(v), \til{x}_\gamma(t) \mapsto x_\gamma(t)$ from $\til{\mcB}_\Phi(R)$ to $\mcB_\Phi(R)$. Since (the images of) all the relations $\mc{T} \cup \til{\mc{S}}_\mcB$ hold in $\mcB_\Phi(R)$ and the latter is generated by $\mc{H}$ and the $x_\gamma(t), t \in \til{T}$, we get a natural epimorphism $\til{\mcB}_\Phi(R) \onto \mcB_\Phi(R)$ by von Dyck's theorem. To prove that this is also injective, let $F$ be the free group on the generating set $\set{h_\alpha(v), x_\gamma(t) \tq \alpha \in \Phi, \gamma \in \Phi^+, v \in A, t \in T}$ of \bref{standardpresBorel} and consider the homomorphism $f$ given by
\[
\xymatrix@R=2mm{
f : F \ar[r] & \til{\B}_\Phi(R) \\
h_\alpha(v) \ar@{|->}[r] & h_\alpha(v) \\
x_\gamma(t) \ar@{|->}[r] & h_\gamma(w) \til{x}_\gamma(ux_i) h_\gamma(w)^{-1},
}
\]
where $w \in \gera{A^{[c_\Phi]}}$ and $ux_i \in \til{T}$ are unique such that $t = w^2 u x_i$. It suffices to show that the set of relations $\mc{S}_\mcB$ given in \bref{standardpresBorel} is contained in $\ker(f)$.

We first consider the relations \bref{BorelWT}. Let $\alpha \in \Phi, \gamma \in \Phi^+, v \in A$ and $t = w^2 u x_i \in T$. Let $w_0 \in \gera{A^{[c_\Phi]}}$ and $u' \in A^{[c_\Phi]}$ be unique such that $v^{(\gamma, \alpha)} w^2 u = w_0^2 u'$, and choose further $k \in \Z$ and $u'' \in A^{[c_\Phi]}$ unique such that $v^{(\gamma, \alpha)} u = v^{2k} u''$. By \bref{tilBorelWT}, we obtain
\[
f(h_\alpha(v)x_\gamma(t)h_\alpha(v)^{-1}x_\gamma(v^{(\gamma,\alpha)}t)^{-1}) =
\]
\[
= h_\gamma(w) h_\alpha(v) \til{x}_\gamma(u x_i) h_\alpha(v)^{-1} h_\gamma(w)^{-1} h_\gamma(w_0) \til{x}_(u'x_i)^{-1} h_\gamma(w_0)^{-1}
\]
\[
= h_\gamma(w) h_\gamma(v)^k \til{x}_\gamma(u'' x_i) h_\gamma(v)^{-k} h_\gamma(w)^{-1} h_\gamma(w_0) \til{x}_\gamma(u'x_i)^{-1} h_\gamma(w_0)^{-1}.
\]
But
\[
v^{2k} w^2 u'' = v^{(\gamma, \alpha)} w^2 u = w_0^2 u', \mbox{ so } u'' = u', w_0 = v^k w,
\]
whence
\[
\til{x}_\gamma(u'' x_i) = \til{x}_\gamma(u'x_i) \mbox{ and } h_\gamma(w_0) = h_\gamma(v)^k h_\gamma(w),
\]
and so
\[
f(h_\alpha(v)x_\gamma(t)h_\alpha(v)^{-1}x_\gamma(v^{(\gamma,\alpha)}t)^{-1}) = 1.
\]

We now want to show that
\[
f([x_\gamma(t_1), x_\gamma(t_2)]) = 1.
\]
This is essentially trivial, for $f$ restricted to $\gera{\mf{X}_\gamma, \mc{H}} \leq \B_\Phi(R)$ yields a surjection onto 
\[
\gera{\til{\mf{X}}_\gamma, \mc{H}_\gamma} = \gera{\set{\til{x}_\gamma(t), h_\gamma(v) \tq t \in \til{T}, v \in A}}
\]
by definition of $f$ and because $\til{\mc{S}}_\B$ from \bref{prestilBorel} contains (the copy of) $\mc{S}_\gamma$. With the relations above at hand, we can now apply Lemma \ref{DERTrick} and the relations \bref{tilBorelU} and recover the remaining commutator relations. It then follows that all relations \bref{BorelU} are contained in $\ker(f)$.

Similarly, we see that the relations \bref{BorelADD} are in $\ker(f)$. Indeed, if $\gamma \in \Phi^+$ and $a = \sum_{l=0}^\nu a_l w_l^{2k_l} u_l x_l \in \mc{A}$, then $f$ maps
\[
\prod_{l = 0}^{\nu} x_\gamma(w_l^{2k_l} u_l x_l)^{a_l} = \prod_{l = 0}^{\nu} h_\gamma(w_l) x_\gamma(u_l x_l)^{a_l} h_\gamma(w_l)^{-1}
\]
to $1$ in $\til{\mcB}_\Phi(R)$, for this holds in $\gera{\mf{X}_\gamma, \mc{H}}$, which surjects onto $\gera{\til{\mf{X}}_\gamma, \mc{H}} \leq \til{\B}_\Phi(R)$, and this concludes the proof. 

\subsection{Proof of Theorem \texorpdfstring{\ref{avolta}}{4.1} for \texorpdfstring{$I \neq \vazio$}{I non-empty}} \label{provaParabs} Recall from Section \ref{Levi} that $\P_I(R) = \mc{K}_I(R) \rtimes \LE_I(R) \leq \ueCD(R)$,
\[
\mc{K}_I(R) = \gera{\mf{X}_\gamma \tqalt \gamma \in \Phi^+ \backslash \Phi_{\Ext(I)}} \mbox{ and}
\]
\[
\LE_I(R) = \gera{\H_\eta, \mf{X}_\alpha \tqalt \eta \in \Phi \mbox{ and } \alpha \in \Phi_I \cup \Phi^+_{\nonAdj(I)}}.
\]
By hypothesis, we may fix a presentation $\LE_I(R) = \gera{\mc{X} \tq \mc{R}}$ with (finite) generating set
\[
\mc{X} = \set{h_\beta(v), x_\alpha(t) \tq \beta \in \Phi, \alpha \in \Phi_I \cup \Phi^+_{\nonAdj(I)}, v \in A \mbox{ and } t \in \til{T}}
\]
and $\mc{R}$ finite. Now consider the following sets of relations. For all $\beta \in \Phi, \gamma \in \Phi^+ \backslash \Phi_{\Ext(I)}, v \in A, t \in T,$
\begin{equation} \label{parabWT}
 h_\beta(v) x_\gamma(t) h_\alpha(v)^{-1} = x_\gamma(v^{(\gamma,\beta)}t).
\end{equation}
For all $t \in \til{T}, s \in T, \alpha \in \Phi_I \cup \Phi_{\nonAdj(I)}^+, \gamma \in \Phi^+ \backslash \Phi_{\Ext(I)},$
 \begin{equation} \label{parabWU}
[x_\alpha(t), x_\gamma(s)] =
\begin{cases}
\displaystyle \underset{m\alpha + n\gamma \in \Phi}{\prod_{m, n > 0}} \zeta(\alpha,\gamma,t^m, s^n), & \mbox{if } \alpha + \gamma \in \Phi; \\
1 & \mbox{otherwise}, \end{cases}
\end{equation}
where $p(t^m, s^n)$ is a fixed expression for the product $t^m s^n$ in terms of $\gera{A^{[c_\Phi]}}$ and $\til{T}$ as described in \bref{produtogeradores} and $\zeta$ is as in \bref{produtaozao}.\\
For all $t_1, t_2 \in T, \gamma, \eta \in \Phi^+ \backslash \Phi_{\Ext(I)},$
 \begin{equation} \label{parabU}
[x_\gamma(t_1), x_\eta(t_2)] =
\begin{cases}
\displaystyle \prod_{m\gamma + n\eta \in \Phi^+} \zeta(\gamma,\eta,t_1^m, t_2^n), & \mbox{if } \gamma + \eta \in \Phi; \\
1 & \mbox{otherwise}, \end{cases}
\end{equation}
where $p(t_1^m, t_2^n)$ is a fixed expression for the product $t_1^m t_2^n$ in terms of $\gera{A^{[c_\Phi]}}$ and $\til{T}$ as described in \bref{produtogeradores} and $\zeta$ is as in \bref{produtaozao}.\\
For all $a = \sum_{l = 0}^{\nu}a_l w_l^{2k_l} u_l x_l \in \mc{A}$ and $\gamma \in \Phi^+ \backslash \Phi_{\Ext(I)}$,
\begin{equation} \label{parabADD}
\prod_{l = 0}^{\nu} x_\gamma(w_l^{2k_l} u_l x_l)^{a_l} = 1.
\end{equation}
Let $\mc{S}_I$ be the set of all relations \bref{parabWT}, \bref{parabWU}, \bref{parabU} and \bref{parabADD}. Then
\begin{equation} \label{standardpresparab}
\P_I(R) \cong \gera{\mc{X} \cup \set{x_\gamma(t) \tq \gamma \in \Phi^+ \backslash \Phi_{\Ext(I)}, t \in T} \tq \mc{R} \cup \mc{S}_I}.
\end{equation}
 We break the proof in two, according to the cases of the {\QG} condition. 

Let
\[
\til{\mc{Y}} = \set{\til{x}_\gamma(t) \tq t\in \til{T}, \gamma \in \Phi^+ \backslash \Phi_{\Ext(I)}}
\]
and define the following (finite) sets of relations. For all $v \in A, ux_i \in \til{T}, \alpha, \beta \in \Phi, \gamma \in \Phi^+ \backslash \Phi_{\Ext(I)}$,
\begin{equation} \label{tilParabWT}
 h_\alpha(v) \til{x}_\gamma(ux_i) h_\alpha(v)^{-1} = h_\beta(v)^k \til{x}_\gamma(u'x_i) h_\beta(v)^{-k},
\end{equation}
where $k \in \Z$ and $u' \in A^{[c_\Phi]}$ are unique such that $v^{(\gamma,\alpha)}u = v^{k(\gamma,\beta)}u'$.\\
For all $t, s \in \til{T}, \alpha \in \Phi_I \cup \Phi_{\nonAdj(I)}^+, \gamma \in \Phi^+ \backslash \Phi_{\Ext(I)},$
 \begin{equation} \label{tilParabWU}
[x_\alpha(t), x_\gamma(s)] =
\begin{cases}
\displaystyle \underset{m\alpha + n\gamma \in \Phi}{\prod_{m, n > 0}} \til{\zeta}(\alpha,\gamma,t^m, s^n), & \mbox{if } \alpha + \gamma \in \Phi; \\
1 & \mbox{otherwise}, \end{cases}
\end{equation}
where $p(t^m, s^n)$ is a fixed expression for the product $t^m s^n$ in terms of $\gera{A^{[c_\Phi]}}$ and $\til{T}$ as described in \bref{produtogeradores} and $\til{\zeta}$ is obtained from $\zeta$ in \bref{produtaozao} by formally replacing $x_{m\gamma + n\eta}$ by $\til{x}_{m\gamma + n \eta}$.\\
For all $t_1, t_2 \in \til{T}, \gamma, \eta \in \Phi^+ \backslash \Phi_{\Ext(I)},$
 \begin{equation} \label{tilParabU}
[\til{x}_\gamma(t_1), \til{x}_\eta(t_2)] =
\begin{cases}
\displaystyle \prod_{m\gamma + n\eta \in \Phi^+} \til{\zeta}(\gamma,\eta,t_1^m, t_2^n), & \mbox{if } \gamma + \eta \in \Phi; \\
1 & \mbox{otherwise}, \end{cases}
\end{equation}
where $p(t_1^m, t_2^n)$ is a fixed expression for the product $t_1^m t_2^n$ in terms of $\gera{A^{[c_\Phi]}}$ and $\til{T}$ as in \bref{produtogeradores} and $\til{\zeta}$ is as above.

\subsubsection*{Case 1 -- \texorpdfstring{$\mcB_2^0(R)$}{Borel subgroup of SL2} is finitely presented} \label{provaParab1} Similarly to the proof of the previous case \ref{provaBorel}, we fix a \emph{finite} presentation 
\[
\mcB_2^0(R) = \gera{\set{h_{\alpha_0}(v), x_{\alpha_0}(t) \tq v \in A, t \in \til{T}} \tq \mc{S}_0} \leq E_{A_1}^{sc}(R)
\] and, for each $\gamma \in \Phi^+ \backslash \Phi_{\Ext(I)}$, let $\mc{S}_\gamma$ be the set obtained from $\mc{S}_0$ by formally replacing $\alpha_0$ by $\gamma$. Define $\til{S}_{\mcB,I}$ as $\bigcup_{\gamma \in \Phi^+ \backslash \Phi_{\Ext(I)}} \mc{S}_\gamma$ together with the sets of all relations \bref{tilParabWT}, \bref{tilParabWU} and \bref{tilParabU}. We claim that the finitely presented group
\begin{equation}
\til{\P}_I(R) = \gera{\mc{X} \cup \til{\mc{Y}} \tq \mc{R} \cup \til{\mc{S}}_{\mcB,I}}
\end{equation}
is isomorphic to the parabolic group $\P_I(R)$.

It is clear that the natural map $h_\alpha(v) \mapsto h_\alpha(v), \til{x}_\gamma(t) \mapsto x_\gamma(t)$ from $\til{\P}_I(R)$ to $\P_I(R)$ induces an epimorphism $\til{\P}_I(R) \onto \P_I(R)$. Let $F$ be the free group on the generating set $\mc{X} \cup \set{x_\gamma(t) \tq \gamma \in \Phi^+ \backslash \Phi_{\Ext(I)}, t \in T}$ of \bref{standardpresparab} and consider the homomorphism $f : F \to \til{\P}_I(R)$ given by $x \in \mc{X} \mapsto x, x_\gamma(t) \mapsto h_\gamma(w) \til{x}_\gamma(u x_i) h_\gamma(w)^{-1}$, where $w \in \gera{A^{[c_\Phi]}}$ and $u x_i \in \til{T}$ are unique such that $t = w^2 u x_i$. We prove that $f$ induces a left-inverse of $\til{\P}_I(R) \onto \P_I(R)$ by showing that the set of relations $\mc{S}_I$ given in \bref{standardpresparab} is contained in $\ker(f)$. But this is essentially a reprise of the previous case \ref{provaBorel}.

In effect, the proof that the relations \bref{parabWT} are contained in $\ker(f)$ is exactly the one given in \ref{provaBorel}, so we won't repeat it here. Since $\til{\mc{S}}_I$ contains copies of the $\mc{S}_\gamma$ that define $\mcB_2^0(R)$, the commutativity between $\til{x}_\alpha(t_1)$ and $\til{x}_\gamma(t_2)$ is dealt with exactly like in \ref{provaBorel}. The relations \bref{parabWU} and \bref{parabU} for $\gamma \neq \eta$ belong to $\ker(f)$ by Lemma \ref{DERTrick}. Finally, the additive relations \bref{parabADD} belong to $\ker(f)$ because $\gera{\mf{X}_\gamma, \mc{H}_\gamma} \cong \mcB_2^0(R)$ surjects onto $\gera{\til{\mf{X}}_\gamma,\mc{H}_\gamma} \leq \til{\P}_I(R)$ via $f$. Therefore $\mc{S}_I \subseteq \ker(f)$.

\subsubsection*{Case 2 -- \texorpdfstring{$R$}{R} is not very bad} \label{casoNVB} This time, we add no further relations besides the ``obvious'' ones already given to obtain a finite presentation of $\P_I(R)$. The standing assumptions that the {\bf NVB} condition allows us to make is that \emph{the structure constants of the commutator formulae are all invertible and we assume them to be in the generating set} $A^{[c_\Phi]}$ of the group of units $R^\times$. In particular, $(C^{\gamma, \eta}_{m,n})^{\pm 1}\cdot x_i \in \til{T}$ for every $x_i \in T_0$. Now, let $\til{\mc{S}}_I$ be the set of all relations \bref{tilParabWT}, \bref{tilParabWU} and \bref{tilParabU} from the previous section together with the following relations regarding the structure constants.
\begin{equation} \label{StructureConstantsRel}
\til{x}_\delta((C_{m,n}^{\gamma,\eta})^{-1} \cdot x_i)^{C_{m,n}^{\gamma,\eta}} = \til{x}_\delta(1 \cdot x_i),
\end{equation}
\begin{equation} \label{StructureConstantsRel2}
h_\alpha(C_{m,n}^{\gamma,\eta}) \til{x}_\delta(x_i) h_\alpha(C_{m,n}^{\gamma,\eta})^{-1} = \til{x}_\delta(x_i) \mbox{ whenever } (\delta, \alpha) = 1.
\end{equation}

 We shall prove that the finitely presented group
\[
\til{\P}_I(R) = \gera{\mc{X} \cup \til{\mc{Y}} \tq \mc{R} \cup \til{\mc{S}}_I}
\]
is isomorphic to $\P_I(R)$.  The set-up is the same as in the previous section, the goal being to show that the relations $\mc{S}_I$ live in $\ker(f)$. Following the previous cases, most of the relations in $\mc{S}_I$ were already dealt with. For the commutator relations, it suffices to prove that, for all $t, s \in R$ and $\gamma \in \Phi^+ \backslash \Phi_{\Ext(I)}$,
\[
f([x_\gamma(t), x_\gamma(s)]) = 1,
\]
since the remaining commutator relations will then follow from Lemma \ref{DERTrick}, as done in the previous cases. For this purpose, we first redefine unipotent root subgroups, now in the finitely presented group $\til{\P}_I(R)$, and recover the analogue of Lemma \ref{derTrick}.

We remark that, since the torus $\H$ is contained in the extended Levi factor $\LE_I(R)$, whose presentation is included in that of $\til{\P}_I(R)$, we freely can (and do) make full use in $\til{\P}_I(R)$ of relations between the semi-simple root elements and simplify expressions. In particular, if $u = v_1^{n_1} \cdots v_\xi^{n_\xi} \in R^\times = \gera{A^{[c_\Phi]}}$, we write $h_\delta(u) = h_\delta(v_1)^{n_1} \cdots h_\delta(v_\xi)^{n_\xi}$. Given $\gamma \in \Phi^+ \backslash \Phi_{\Ext(I)}$, we let
\[
\til{\mf{X}}_\gamma = \gera{\set{h \til{x}_\gamma(t) h^{-1} \tq t \in \til{T},~ h \in \H}} \leq \til{\P}_I(R).
\]
The next lemma shows, in particular, that $\til{\mf{X}}_\gamma = \gera{\set{h_\gamma \til{x}_\gamma(t) h_\gamma^{-1} \tq t \in \til{T},~ h_\gamma \in \H_\gamma}}$.
\begin{lem} \label{2.3'}
 Given two distinct roots $\alpha, \beta$ with $\beta \neq -\alpha$, there exist a subtorus $H_{\alpha,\beta}(R) \leq \H$ and a non-zero integer $m_{\alpha,\beta}$ such that $H_{\alpha,\beta}(R)$ centralizes $\mf{X}_\alpha$ and
 \[
  h(u) \til{x}_\beta (v x_i) h(u)^{-1} = h_\beta (u)^{m'} \til{x}_\beta (v' x_i) h_\beta (u)^{-m'},
 \]
where $m' = m'(m_{\alpha,\beta},v) \in \Z$ and $v' = v'(m_{\alpha, \beta}, v) \in A^{[c_\Phi]}$ are unique such that $u^{m_{\alpha,\beta}} v = u^{2m'} v'$.
\end{lem}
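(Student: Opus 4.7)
The plan is to mimic Lemma \ref{derTrick} inside the finitely presented group $\til{\P}_I(R)$, using the defining relations \bref{tilParabWT} together with the relations $\mc{R}$ of the Levi $\LE_I(R)$ which are part of the presentation $\mc{R} \cup \til{\mc{S}}_I$. First I would take the same one-parameter subtorus $H_{\alpha,\beta}(R) = \gera{h(u) \tqalt u \in R^\times} \leq \H$ with $h(u) := h_\beta(u)^{-q} h_\alpha(u)^p$, where $p, q \in \Z \setminus \set{0}$ satisfy $2p - q(\alpha,\beta) = 0$, and set $m_{\alpha,\beta} := p(\beta,\alpha) - 2q$, which is non-zero by the same computation as in Lemma \ref{derTrick}.

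Since $H_{\alpha,\beta}(R) \leq \H \leq \LE_I(R)$, all needed torus manipulations---abelianness, the decomposition $h_\gamma(u) = h_\gamma(v_1)^{n_1} \cdots h_\gamma(v_\xi)^{n_\xi}$ for $u = v_1^{n_1} \cdots v_\xi^{n_\xi} \in \gera{A^{[c_\Phi]}}$, and the Steinberg-type centralization of $\mf{X}_\alpha$ when $\alpha \in \Phi_I \cup \Phi_{\nonAdj(I)}^+$---are immediate consequences of $\mc{R}$. The centralization when $\alpha \in \Phi^+ \backslash \Phi_{\Ext(I)}$ I would obtain by iterating \bref{tilParabWT} with $\gamma = \alpha$: the exponent of $u$ accumulated on the $h_\beta$-side is $p(\alpha,\alpha) - q(\alpha,\beta) = 2p - q(\alpha,\beta) = 0$, so $h(u)$ commutes with every $\til{x}_\alpha(v x_i)$, hence with $\til{\mf{X}}_\alpha$.

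The main computation is the conjugation formula on $\til{x}_\beta(v x_i)$. Writing $u = v_1^{n_1} \cdots v_\xi^{n_\xi}$ with each $v_j \in A$, I would iterate \bref{tilParabWT} letter by letter with $\gamma = \beta$: each inner conjugation by $h_\alpha(v_j)$ transfers to a conjugation by $h_\beta(v_j)^{k_j}$ governed by the identity $v_j^{(\beta,\alpha)}\cdot(\text{current scalar}) = v_j^{2 k_j}\cdot(\text{new scalar})$. Running this over all $v_j$ (including negative exponents, handled by abelianness of $\H$) and collecting the $h_\beta$-factors accumulates a total exponent $u^{p(\beta,\alpha)}$ on the $h_\beta$-side; the outer factor $h_\beta(u)^{-q}$ contributes the further exponent $u^{-2q}$, giving the combined exponent $u^{p(\beta,\alpha) - 2q} = u^{m_{\alpha,\beta}}$. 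A final application of \bref{tilParabWT} then pushes the scalar $u^{m_{\alpha,\beta}} v$ into the canonical form $u^{2m'} v'$ with $m' \in \Z$ and $v' \in A^{[c_\Phi]}$ uniquely determined, yielding the claimed identity.

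The hard part will be purely bookkeeping: carefully tracking how the scalar in the $\til{x}_\beta(\cdot\, x_i)$ slot is rewritten at each successive application of \bref{tilParabWT}, and checking that the accumulated exponent of $u$ matches the integer $m_{\alpha,\beta}$ predicted by Lemma \ref{derTrick}. Uniqueness of the final decomposition $u^{m_{\alpha,\beta}} v = u^{2m'} v'$ is precisely the uniqueness already built into the formulation of \bref{tilParabWT}, so no new structural input beyond the relations in $\mc{R} \cup \til{\mc{S}}_I$ is required.
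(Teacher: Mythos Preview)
Your proposal is correct and follows essentially the same route as the paper: you construct the one-parameter subtorus $h(u) = h_\beta(u)^{-q} h_\alpha(u)^p$ with $2p - q(\alpha,\beta) = 0$ exactly as in Lemma~\ref{derTrick}, set $m_{\alpha,\beta} = p(\beta,\alpha) - 2q$, and then derive both the centralization of $\mf{X}_\alpha$ and the displayed conjugation identity on $\til{x}_\beta(vx_i)$ by iterated applications of \bref{tilParabWT}. The paper's own proof does precisely this, only sketching the iteration more tersely; your letter-by-letter bookkeeping with $u = v_1^{n_1}\cdots v_\xi^{n_\xi}$ and your separate treatment of the case $\alpha \in \Phi_I \cup \Phi_{\nonAdj(I)}^+$ (where the centralization comes directly from $\mc{R}$) are welcome clarifications but not a different argument.
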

\begin{proof}
 As in the proof of Lemma \ref{derTrick}, take $p, q \in \Z$ with $2p - q \cdot (\alpha, \beta) = 0$ and set $h(u) := h_\beta(u)^{-q} h_\beta(u)^p \in \til{\P}_I(R),~ H_{\alpha,\beta}(R) := \gera{\set{h(u) \tq u \in R^\times}} \leq \til{\P}_I(R)$ and $m_{\alpha, \beta} := p \cdot (\beta, \alpha) - 2q \neq 0$. The equation stated follows from iterated applications of \bref{tilParabWT}. As for the first claim, by induction and \bref{tilParabWT} there exist unique $v' \in A^{[c_\Phi]}$ and $n' \in \Z$ such that $u^{-q(\alpha,\beta)}v = u^{2n'}v'$, thus
 \[
  h(u) \til{x}_\alpha(vx_i) h(u)^{-1} = h_\alpha(u)^p h_\beta(u)^{-q} \til{x}_\alpha(vx_i) h_\beta(u)^q h_\alpha(u)^{-p}
  \]
  \[
  = h_\alpha(u)^{p+n'} \til{x}_\alpha(v'x_i) h_\alpha(u)^{-p-n'}.
 \]
On the other hand, by our choice of $p$ and $q$, we have that $2p = q(\alpha, \beta) = -2n'$. Therefore $h_\alpha(u)^{p+n'} = h_\alpha(u)^{p-p} = 1$ and the lemma follows.
\end{proof}

In the sequel we simplify the proofs by making use of more explicit Chevalley commutator formulae, though with no loss of generality since the proofs are analogous if the ordering of the roots (and thus the formulae) change. We refer the reader to \cite[Chapter 10]{Steinberg} for explicit formulae and structure constants in types $B$ and $G$. We shall often simplify the notation on the structure constants, writing e.g. $B, C, D, E,...$ instead of $C_{m,n}^{\gamma,\eta}, C_{a,b}^{\alpha,\beta},...$ and so on.

Let $r, s \in R$ and $\gamma \in \Phi^+ \backslash \Phi_{\Ext(I)}$. We want to show that 
\begin{align} \label{asterisco}
\begin{split}
 1 = f([x_\gamma(r), x_\gamma (s)]) = & \displaystyle \left[ \prod_{l = 0}^{\nu} h_\gamma(w_l)^{k_l} \til{x}_\gamma(u_l x_l)^{a_l} h_\gamma(w_l)^{-k_l} \right., \\
 & \displaystyle \left. \prod_{m = 0}^{\nu} h_\gamma(z_m)^{e_m} \til{x}_\gamma(v_m x_m)^{b_m} h_\gamma(z_m)^{-e_m} \right],
\end{split}
 \tag{**}
\end{align}
where
\[
r = \sum_{l = 0}^{\nu}a_l w_l^{2k_l} u_l x_l \mbox{ with } a_l \in \Z, w_l \in \gera{A^{[c_\Phi]}}, u_l x_l \in \til{T}
\]
and
\[
 s = \sum_{m = 0}^{\nu} b_m z_m^{2e_m} v_m x_m, \mbox{ with } b_m \in \Z, z_m \in \gera{A^{[c_\Phi]}}, v_m x_m \in \til{T}.
\]
Relations \bref{tilParabU} already give us $[\til{x}_\gamma(t), \til{x}_\gamma(s)] = 1$ for all $t, s \in \til{T}$. Since $\gamma \in \Phi^+ \backslash \Phi_{\Ext(I)}$, we may choose $\alpha \in \Phi_I \cup \Phi^+$ and $\beta \in \Phi^+$ such that $\alpha + \beta = \gamma$, which exist either by Lemma \ref{Humphreys} or by Lemma \ref{adjacencia}. Now, for each $s \in \til{T}$, using the commutator relations \bref{tilParabWU} and \bref{tilParabU} at our disposal, together with \bref{StructureConstantsRel}, we obtain from the explicit commutator formulae the following equations (which not necessarily cover all possibilities!) for $\til{x}_\gamma(s)$, depending on the type of the subsystem $\Phi_{\set{\alpha,\beta}}$.

\begin{equation} \label{possibilidades}
 \til{x}_\gamma(s) =
 \begin{cases}
  [\til{x}_\alpha(s), \til{x}_\beta(C^{-1})]^{\pm 1}, \mbox{ if } (m\alpha + n \beta \in \Phi_{\set{\alpha,\beta}} \iff m=n=1); \\
  [\til{x}_\alpha(s), \til{x}_\beta(1)] \til{x}_{\alpha + 2\beta}(s)^{\mp 1}, \mbox{ if } \gamma, \gamma + \beta \in \Phi_{\set{\alpha,\beta}} = B_2; \\
  [\til{x}_\alpha(s), \til{x}_\beta(D^{-1})] \til{x}_{\alpha + \gamma}(s^2 D^{-1})^{-3} \til{x}_{\gamma + \beta}(s D^{-2})^{3}, \mbox{ if } \gamma \mbox{ is short and} \\
	\Phi_{\set{\alpha,\beta}} \mbox{ is of type } G_2,
 \end{cases}
\end{equation}
where $C, D$ are shortenings for the appropriate structure constants involved in each type. (Warning: Here we are slightly misusing notation. In fact, the powers of $s$ or of $D^{-1}$ and their products are not necessarily allowed in $\til{x}_\gamma(\cdot)$, for they need not be elements of $\til{T}$. We should instead write $h\til{x}_\gamma(v' y') h^{-1}$ for some $v' y' \in \til{T}$ and some $h \in \H$ in each misused occurrence in the above. However, since we are dealing in the sequel with commutators of products of the form $h\til{x}_\gamma(v' y') h^{-1}$, this abuse of notation does not affect our arguments.)

Applying \bref{possibilidades} to \bref{asterisco} only in the expression of $f(x_\gamma(s))$, we conclude that \bref{asterisco} holds once we prove the following. For all $h, g \in \H_\gamma$ and $t, s \in \til{T}$,
\begin{equation} \label{contafinal}
 [h \til{x}_\gamma(s) h^{-1}, g \til{x} g^{-1}] = 1,
\end{equation}
where $\til{x}$ is a product of the form of (one of the cases of) the right hand side of \bref{possibilidades}. For the proof we shall need the following.
\begin{obs} \label{cortacaminho}
For all $\delta_1, \delta_2 \in \Phi$ with $\delta_1 + \delta_2$ being the only linear combination of those roots which lies in $\Phi$, one has
\[
\displaystyle \left[ \prod_{i=1}^n h_i \til{x}_{\delta_1}(t_i) h_i^{-1}, \prod_{j=1}^m h_j \til{x}_{\delta_2}(t_j) h_j^{-1} \right] = 1
\]
for all $h_i, h_j \in \H$ and $t_i, t_j \in \til{T}$.
\end{obs}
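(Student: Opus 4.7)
The plan is to reduce the commutator of the two products to a product of simple commutators via Lemma \ref{comutinho}, and then to evaluate each simple commutator using the Chevalley relations \bref{tilParabWU} and \bref{tilParabU} already built into the presentation of $\til{\P}_I(R)$. The hypothesis that $\delta_1 + \delta_2$ is the only nontrivial $\Z_{\geq 1}$-linear combination of $\delta_1, \delta_2$ which lies in $\Phi$ ensures that the commutator formula for $[\til{x}_{\delta_1}(t), \til{x}_{\delta_2}(s)]$ has only a single potential factor on its right-hand side, and moreover that all higher brackets $[\til{x}_{\delta_i}(\cdot), \til{x}_{\delta_1 + \delta_2}(\cdot)]$ vanish, because $2\delta_1 + \delta_2$ and $\delta_1 + 2\delta_2$ are not roots.

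First, I would use the Steinberg conjugation relations \bref{tilParabWT} to normalize each conjugate $h_i \til{x}_{\delta_1}(t_i) h_i^{-1}$ into the form $h_{\delta_1}(w_i)^{k_i} \til{x}_{\delta_1}(u_i x_{\ell_i}) h_{\delta_1}(w_i)^{-k_i}$ with $w_i \in \gera{A^{[c_\Phi]}}$ and $u_i x_{\ell_i} \in \til{T}$, and similarly for the $B_j := h_j \til{x}_{\delta_2}(t_j) h_j^{-1}$. Second, I would apply Lemma \ref{comutinho} by induction on the total length $n+m$ to expand
\[
\left[ \prod_{i=1}^n A_i,~ \prod_{j=1}^m B_j \right]
\]
into an ordered product of conjugates of the simple commutators $[A_i, B_j]$. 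The centrality of $\til{\mf{X}}_{\delta_1 + \delta_2}$ in $\gera{\til{\mf{X}}_{\delta_1}, \til{\mf{X}}_{\delta_2}}$ noted above causes the various conjugators in the expansion to act trivially, reducing the expression to a product in the abelian subgroup $\til{\mf{X}}_{\delta_1 + \delta_2}$.

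Third, each simple commutator $[A_i, B_j]$ is computed via \bref{tilParabWU}, after bringing the semisimple factors $h_{\delta_1}(w_i), h_{\delta_2}(z_j)$ past the unipotent ones using \bref{tilParabWT}: the hypothesis guarantees this produces at most one Chevalley term, and in the setting of the remark this term collapses to $1$. The main obstacle I foresee lies in step two, in the careful bookkeeping needed to verify that all conjugators can indeed be discarded and that no mixed Chevalley terms are produced along the way—this is really a matter of confirming that, under the hypothesis, the subgroup $\gera{\til{\mf{X}}_{\delta_1}, \til{\mf{X}}_{\delta_2}, \H}$ is a nilpotent group of class at most $2$ in which the commutator pairing $\til{\mf{X}}_{\delta_1} \times \til{\mf{X}}_{\delta_2} \to \til{\mf{X}}_{\delta_1 + \delta_2}$ is bilinear. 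Once this structural fact is in place, the vanishing of each individual $[A_i, B_j]$ forces the full expansion to be the identity.
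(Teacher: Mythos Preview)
Your overall strategy---expand the big commutator via Lemma~\ref{comutinho} into conjugates of simple commutators $[A_i,B_j]$, then evaluate each of these using the Chevalley relations \bref{tilParabWU}, \bref{tilParabU} together with the Steinberg-type relations \bref{tilParabWT}---is exactly what the paper does. The paper simply packages this by saying ``apply Steps~0,~1 and~2 of Lemma~\ref{DERTrick}, with Lemma~\ref{2.3'} in place of Lemma~\ref{derTrick}''; those steps are precisely the normalization of the toral conjugators, the base case for $t_i,t_j\in\til T$, and the inductive expansion via Lemma~\ref{comutinho}.

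Two points deserve attention, though. First, you assert that ``in the setting of the remark this term collapses to $1$'' and then that ``each individual $[A_i,B_j]$ vanishes''. Under the \emph{literal} hypothesis that $\delta_1+\delta_2\in\Phi$, this is false: the commutator $[\til x_{\delta_1}(t),\til x_{\delta_2}(s)]$ is a nontrivial element of $\til{\mf X}_{\delta_1+\delta_2}$, and bilinearity of the pairing does not make a sum of nonzero terms vanish. In fact, the paper's own proof only invokes Step~2 of Lemma~\ref{DERTrick}, which treats the case $\gamma+\eta\notin\Phi$, and every application of the remark later in the text has $\delta_1+\delta_2\notin\Phi$. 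So the hypothesis is stated imprecisely; your argument is sound once you read it as ``no positive combination $m\delta_1+n\delta_2$ lies in $\Phi$'', but you should say so rather than leave ``collapses to $1$'' unexplained.

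Second, the paper singles out the crucial feature of this argument: Steps~0--2 \emph{do not use} the relations \bref{faltouabeliano}, i.e.\ commutativity of $\til x_\gamma(r)$ with $\til x_\gamma(s)$. This matters because the remark is invoked precisely while that commutativity is being established in $\til{\P}_I(R)$; using it would be circular. Your expansion is fine in the case $\delta_1+\delta_2\notin\Phi$ because all inner commutators are already $1$ and the conjugators become irrelevant, but you should make this non-circularity explicit.
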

To see this, just apply the proof of Steps 0, 1 and 2 of Lemma \ref{DERTrick} to the set-up above for $\til{\P}_I(R)$, replacing the use of Lemma \ref{derTrick} there by Lemma \ref{2.3'}---observe that Steps 0, 1 and 2 do not depend on the commutativity relations \bref{faltouabeliano} and thus can be carried over almost verbatim to the present context. We now prove \bref{contafinal} based on the three cases of \bref{possibilidades}.

{\bf Case 1} Suppose $\gamma = \alpha + \beta$ is the only linear combination of $\alpha$ and $\beta$ in $\Phi_{\set{\alpha,\beta}}$. We have
\[
f([h x_\gamma(t) h^{-1}, g x_\gamma(s) g^{-1}]) = [h \til{x}_\gamma(t) h^{-1}, g \til{x}_\gamma(s) g^{-1}] 
\]
\[
= [h \til{x}_\gamma(t) h^{-1}, g [\til{x}_\alpha(s), \til{x}_\beta(C^{-1})]^{\pm 1} g^{-1}] = 1,
\]
by Remark \ref{cortacaminho}.

{\bf Case 2} Suppose $\Phi_{\set{\alpha,\beta}}$ is of type $B_2$ with $\gamma, \gamma + \beta \in \Phi_{\set{\alpha,\beta}}$. By \bref{possibilidades} and Remark \ref{cortacaminho}, one has
\[
h \til{x}_\gamma(t) h^{-1} g \til{x}_\gamma(s) g^{-1} = 
\]
\[
= h \til{x}_\gamma(t) h^{-1} g ( \til{x}_\alpha(s)^{\pm 1} \til{x}_\beta(1)^{\pm 1} \til{x}_\alpha(s)^{\mp 1} \til{x}_\beta(1)^{\mp 1} \til{x}_{\gamma + \beta}(s)^{\mp 1} ) g^{-1}
\]
\begin{equation} \label{eq1doB2}
= g \til{x}_\alpha(s)^{\pm 1} g^{-1} h \til{x}_\gamma(t) h^{-1} g (\til{x}_\beta(1)^{\pm 1} \til{x}_\alpha(s)^{\mp 1} \til{x}_\beta(1)^{\mp 1} \til{x}_{\gamma + \beta}(s)^{\mp 1}) g^{-1}. \tag{$\diamond$}
\end{equation}
By Lemma \ref{2.3'}, we can find $h_1, g_1 \in \H$ and $t', a \in \til{T}$ such that
\begin{align} \label{eq2doB2}
\begin{split}
h \til{x}_\gamma(t) h^{-1} = h_1 \til{x}_\gamma(t') h_1^{-1},~ & g \til{x}_\beta(1) g^{-1} = g_1 \til{x}_\beta(a) g_1^{-1}, \\
h_1 \til{x}_\beta(a) = \til{x}_\beta(a) h_1 \mbox{ and } & g_1 \til{x}_\gamma(t') = \til{x}_\gamma(t') g_1.
\end{split} \tag{$\star$}
\end{align}
Applying \bref{eq2doB2} and Relations \bref{tilParabWU}, \bref{tilParabU} to \bref{eq1doB2}, we obtain
\begin{align*}
(\diamond) = & g \til{x}_\alpha(s)^{\pm 1} g^{-1} g \til{x}_\beta(1)^{\pm 1} g^{-1} h_1 g_1 \til{x}_{\gamma + \beta}(t'a)^{\pm 2} g_1^{-1} h_1^{-1} h \til{x}_\gamma(t) h^{-1} \times \\
& \times g (\til{x}_\alpha(s)^{\mp 1} \til{x}_\beta(1)^{\mp 1} \til{x}_{\gamma + \beta}(s)^{\mp 1}) g^{-1} \\
 = & g (\til{x}_\alpha(s)^{\pm 1} \til{x}_\beta(1)^{\pm 1} ) g^{-1} h_1 g_1 \til{x}_{\gamma + \beta}(t'a)^{\pm 2} g_1^{-1} h_1^{-1} g \til{x}_\alpha(s)^{\mp 1} g^{-1} \times \\
& \times g \til{x}_\beta(1)^{\mp 1} g^{-1} h_1 g_1 \til{x}_{\gamma + \beta}(t'a)^{\mp 2} g_1^{-1} h_1^{-1} h \til{x}_\gamma(t) h^{-1} g \til{x}_{\gamma + \beta}(s)^{\mp 1} g^{-1} \\
= & g (\til{x}_\alpha(s)^{\pm 1} \til{x}_\beta(1)^{\pm 1} \til{x}_\alpha(s)^{\mp 1} \til{x}_\beta(1)^{\mp 1} \til{x}_{\gamma + \beta}(s)^{\mp 1}) g^{-1} h \til{x}_\gamma(t) h^{-1} \\
= & g \til{x}_\gamma(s) g^{-1} h \til{x}_\gamma(t) h^{-1},
\end{align*}
because the $h' \til{x}_{\gamma + \beta}(r) h'^{-1}$ commute with the other terms above by Remark \ref{cortacaminho} and the previous case. If, on the other hand, $\gamma + \beta \notin \Phi_{\set{\alpha, \beta}}$, then we are back in the situation of Case 1, so Case 2 is concluded.

{\bf Case 3} Assume $\Phi_{\set{\alpha,\beta}}$ to be of type $G_2$. If $\gamma$ is long we may take $\alpha, \beta$ as in the situation of Case 1. Otherwise, and since the case $\P_{\delta}(R) \leq E_{G_2}^{sc}(R),~ \delta$ long, is excluded, we may take $\alpha, \beta$ such that the third equality of \bref{possibilidades} apply. We thus have
\begin{align}
\begin{split}
h \til{x}_\gamma(t) h^{-1} g \til{x}_\gamma(s) g^{-1} = & h \til{x}_\gamma(t) h^{-1} \times \\
 \times & g ( [\til{x}_\alpha(s), \til{x}_\beta(D^{-1})] \til{x}_{\alpha + \gamma}(s^2 D^{-1})^{-3} \til{x}_{\gamma + \beta}(s D^{-2})^{3} ) g^{-1}.
\end{split} \tag{$\bigtriangleup$}
\end{align}
The rest of the proof is entirely analogous to the previous case. Indeed, pick $h_1, g_1, h_2, g_2 \in \H$ and $t', s', t'', s'' \in \til{T}$ such that
\begin{align} \label{eq1doG2}
\begin{split}
h \til{x}_\gamma(t) h^{-1} = h_1 \til{x}_\gamma(t') h_1^{-1},~ & g \til{x}_\alpha(s) g^{-1} = g_1 \til{x}_\alpha(s') g_1^{-1}, \\
h_1 \til{x}_\alpha(s') = \til{x}_\alpha(s') h_1,~ & g_1 \til{x}_\gamma(t') = \til{x}_\gamma(t') g_1 \\
h \til{x}_\gamma(t) h^{-1} = h_2 \til{x}_\gamma(t'') h_2^{-1},~ & g \til{x}_\beta(D^{-1}) g^{-1} = g_2 \til{x}_\beta(s'') g_2^{-1}, \\
h_2 \til{x}_\beta(s'') = \til{x}_\beta(s'') h_2 \mbox{ and } & g_2 \til{x}_\gamma(t'') = \til{x}_\gamma(t'') g_2,
\end{split} \tag{$\dagger$}
\end{align}
which exist by Lemma \ref{2.3'}. Since $\gamma + \gamma + \alpha,~ \gamma + \gamma + \beta \notin \Phi_{\set{\alpha, \beta}}$, we obtain from Equalities \bref{eq1doG2}, Relations \bref{tilParabWU}, \bref{tilParabU} and Remark \ref{cortacaminho} that
\begin{align*}
(\bigtriangleup) = & h_1 g_1 \til{x}_{\gamma + \alpha}(t's')^{-3} g_1^{-1} h_1^{-1} g \til{x}_{\alpha}(s) g^{-1} h_2 g_2 \til{x}_{\gamma + \beta}(t''s'')^{-3} g_2^{-1} h_2^{-1} \times \\
\times & g \til{x}_\beta(D^{-1}) g^{-1} h_1 g_1 \til{x}_{\gamma + \alpha}(t's')^{3} g_1^{-1} h_1^{-1} g \til{x}_{\alpha}(s)^{-1} g^{-1} \times \\
\times & h_2 g_2 \til{x}_{\gamma + \beta}(t''s'')^{3} g_2^{-1} h_2^{-1} g \til{x}_\beta(D^{-1})^{-1} g^{-1} g \til{x}_{\gamma + \alpha}(s^2D^{-1})^{-3} g^{-1} \times \\
\times & g \til{x}_{\gamma+\beta}(sD^{-2})^{3} g^{-1} h \til{x}_\gamma(t) h^{-1} \\
= & g \til{x}_\gamma(s) g^{-1} h \til{x}_\gamma(t) h^{-1},
\end{align*}
as required.

From the three cases above, we conclude that all relations \bref{parabU} are in $\ker(f)$.

It remains to show that the additive relations \bref{parabADD} also lie in $\ker(f)$. It suffices to prove this for the simple roots $\beta$ in $\Phi^+ \backslash \Phi_{\Ext(I)}$. Such a simple root is necessarily adjacent to an element of $I$, so by Lemma \ref{adjacencia} we may assume that the root subgroup $\til{\mf{X}}_\beta$ lies in the unipotent radical of a parabolic subgroup in type $A_2$, $B_2$ or $G_2$, whose Levi factor is generated by $\mf{X}_\alpha, \mf{X}_{-\alpha}$ and $\mc{H}_\beta$ for some $\alpha \in I$. Moreover, $\set{\alpha, \beta}$ is a basis for the underlying root subsystem.

From now on, we make full use of the commutator relations without further references, for they were already obtained from the previous steps of the proof together with Lemma \ref{DERTrick}. The point now is that the additive relations hold in $\gera{\mf{X}_\alpha, \mf{X}_{-\alpha}, \mc{H}_\beta}$ because they do in $\LE_I(R)$. Consequently, they also hold in $\mf{X}_\delta$, where $\delta \in \Phi_{\set{\alpha,\beta}}$ is a root of maximal height. Let $a \in \mc{A}$ be a defining additive relation. We want to show that $\til{x}_\beta(a) = 1$. If $\Phi_{\set{\alpha,\beta}}$ is of type $A_2$ or if $\alpha$ is a short in $A_2,~ B_2$ or $G_2$ then, since $R$ is not very bad for $\Phi$, we can write
\[
\til{x}_\beta(a) = [\til{x}_{\pm \alpha}(a), \til{x}_\eta(C)]
\]
for some $\eta \in \Phi_{\set{\alpha,\beta}}$, where $C$ is (an inverse of) a structure constant (possibly $1$). This implies that $\til{x}_\beta(a) = 1$, since $\til{x}_{\pm \alpha}(a) = 1$.

Suppose $\beta$ is short and $\Phi_{\set{\alpha,\beta}} = G_2$. This is actually the case of the excluded parabolic subgroup $\P_{\set{\alpha}}(R) \leq E_{G_2}^{sc}(R),~ \alpha$ long (recall that we are assuming the commutator relations hold). We include it here because the arguments also illustrate the proof for the remaining case, type $B_2,~ \beta$ short. We have then the following equations.
\[
\til{x}_{\alpha+\beta}(a) \til{x}_{\alpha + 2\beta} (a)^{-1} = 1;
\]
\[
\til{x}_\beta(a)^{\pm B} \til{x}_{\alpha +2\beta}(a)^{\pm C}.
\]
Since $\til{x}_\eta(a)$ vanishes for $\eta$ of large height, the first equation implies
\[
\til{x}_{\alpha + 2\beta}(a)^{\pm D} = 1.
\]
Here, $B, C, D$ are shortenings for structure constants. Observe that $\alpha + 2\beta$ belongs to a root subsystem of type $A_2$ so that we can find a root $\gamma$ such that $(\alpha + 2\beta, \gamma) = 1$. Since the structure constants are invertible, there is a $u \in A^{[c_\Phi]}$ such that
\[
h_\gamma(u) \til{x}_{\alpha + 2\beta}(a)^{\pm 1} h_\gamma(u)^{-1} = \til{x}_{\alpha + 2\beta}(a)^{\pm D} = 1.
\]
Hence $\til{x}_{\alpha + 2\beta}(a)$, and so $\til{x}_\beta(a)^{\pm B}$, vanish. Since $\beta$ also lies in a root subsystem of type $A_2$, it follows that $\til{x}_\beta(a) = 1$. 

If $\Phi_{\set{\alpha,\beta}} = B_2$, an entirely analogous argument shows that $\til{x}_\beta(a) = 1$ in that case too. Therefore $\mc{S}_I \subseteq \ker(f)$, which concludes the proof of the theorem.

\section{Proof of Theorem \texorpdfstring{\ref{B}}{C}} \label{aplicacoes}

\noindent

We begin by reducing the problem to the case of Chevalley--Demazure groups. Let $\mbf{G}$ be a split, connected, reductive, linear algebraic group defined over a global field $\K$, let $\OS \subset \K$ be a ring of $S$-integers---i.e. a Dedekind domain of arithmetic type---and let $\mbf{P}$ be a proper parabolic subgroup. Since every parabolic in $\mbf{G}$ is conjugate to a standard one, we may restrict ourselves to the standard parabolic subgroups with respect to an arbitrary, but fixed, maximal split torus. The group $\mbf{G}$ fits into the following diagram.
\[
\xymatrix{
& & \G \ar@{->>}[d]^{f_1} \\
\mbf{G} & \ar@{->>}[l]^{f_2} \mc{R}\mbf{G} \times \mbf{G}' \ar@{->>}[r] & \mbf{G}'
}
\]
Here $\mbf{G}'$ is semi-simple, the maps $f_1$ and $f_2$ are central isogenies, $\mc{R}\mbf{G}$ is the radical of $\mbf{G}$ and the upper group $\G$ is simply connected. By taking the corresponding diagram restricted to parabolic subgroups, it follows that $S$-arithmetic subgroups of $\mbf{P} \leq \mbf{G}$ are finitely generated (resp. finitely presented) if and only if so are the $S$-arithmetic subgroups of the corresponding parabolic $\P \leq \G$ (see e.g. \cite[91]{Behr98}). Since $\G$ is split, connected, simply connected and semi-simple, it is in fact a Chevalley--Demazure group scheme of simply connected type, so the group $\G(\OS)$ of $\OS$-points is well-defined and is in fact $S$-arithmetic since it fixes the $\OS$-lattice acted upon by $\G(\K)$.
As $S$-arithmetic subgroups of a given algebraic group are commensurable, we may restrict ourselves to the group $\P(\OS) \leq \G(\OS)$. Finally, the condition $|S| > 1$, if $\K$ is a global function field, guarantees that the parabolic subgroups of $\P(\OS)$ are finitely generated by O'Meara's structure theorem \cite[Thm. 23.2]{O'Meara}.

Let $\Gamma \leq \mbf{P}$ be $S$-arithmetic. We now break the proof in three, according to the given hypotheses of Theorem \ref{B}.

\subsection{Part \texorpdfstring{(\ref{B.1})}{C.i}} If $\carac(\K) = 0$, Abels' theorem \cite{Abels} implies that $\B_0^2(\OS)$ is always finitely presented, so $\OS$ satisfies the {\bf QG} condition. Now, the extended Levi factor is an extension, by a torus, of a direct product of Borel subgroups or reductive groups (cf. Section \ref{Levi}). From Abels' theorem and \cite[Thm. 6.2]{BoSe}, the extended Levi factors in characteristic zero are always finitely presented, so Part \bref{B.1} follows from Theorem \ref{A}.

\subsection{Part \texorpdfstring{(\ref{B.2a})}{C.iia}} The following remark gives meaning to the statement of Theorem \ref{B}, Part \bref{B.2a}.

\begin{lem}
Let $r : \mbf{H} \onto \mbf{B}$ be a $k$-retract of $k$-split, connected, linear algebraic groups. If $\Lambda \leq \mbf{B}$ is $S$-arithmetic, then $\Lambda$ is finitely presented (resp. finitely generated) whenever an $S$-arithmetic subgroup $\Delta \leq \mbf{H}$ is finitely presented (resp. finitely generated).
\end{lem}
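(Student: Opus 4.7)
The plan is to reduce the lemma to a pure group-theoretic statement and then invoke Stallings' Lemma \ref{Stallings}. The idea is that the $k$-retraction, together with its $k$-section $\iota: \mbf{B} \into \mbf{H}$, identifies $\mbf{B}$ with a $k$-closed subgroup of $\mbf{H}$ and yields an algebraic semi-direct decomposition
\[
 \mbf{H} = \mbf{N} \rtimes \iota(\mbf{B}), \quad \mbf{N} := \ker(r),
\]
which I would transfer to $S$-arithmetic subgroups. First, since $r$ and $\iota$ are $k$-morphisms of $k$-split connected linear algebraic groups, I can fix a common integral model of $\mbf{H}$ and $\mbf{B}$ over $\OS$ (enlarging $S$ by finitely many places if necessary, which preserves the finiteness properties in question up to commensurability) so that both $r$ and $\iota$ are defined over $\OS$. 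Then the split short exact sequence
\[
 1 \to \mbf{N}(\K) \to \mbf{H}(\K) \xrightarrow{r} \mbf{B}(\K) \to 1
\]
restricts to a split short exact sequence of $S$-arithmetic subgroups.

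Next, given the $S$-arithmetic $\Delta \leq \mbf{H}$, I would take $\Lambda' := r(\Delta) \leq \mbf{B}$, which is $S$-arithmetic in $\mbf{B}$ because $r$ admits a $k$-section (so surjectivity is preserved at the level of rational points, and $\Lambda'$ is commensurable with the image of a standard integral model). After passing to a finite-index $S$-arithmetic $\Delta' \leq \Delta$, one arranges $\iota(\Lambda') \subseteq \Delta'$ and
\[
 \Delta' = (\Delta' \cap \mbf{N}(\K)) \rtimes \iota(\Lambda'),
\]
so that the restriction $r|_{\Delta'} : \Delta' \onto \Lambda'$ is a group-theoretic retraction with section $\iota|_{\Lambda'}$. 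Since finite generation and finite presentation are invariant under commensurability among finitely generated groups, $\Delta'$ inherits the corresponding property from $\Delta$. Applying Stallings' Lemma \ref{Stallings} yields the same property for the retract $\Lambda'$, and another commensurability step transfers it to the original $\Lambda$.

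The main technical step, and the only one that is not purely formal, is the arithmetic compatibility: guaranteeing, after replacing $\Delta$ by a commensurable subgroup, that $\iota(\Lambda')$ actually normalizes a full-rank subgroup of $\Delta \cap \mbf{N}(\K)$ and that the product is a genuine semi-direct decomposition inside $\mbf{H}(\K)$. This is standard in the theory of $S$-arithmetic groups for split reductive/soluble ambient groups (cf. the arguments in \cite{BoSe} and \cite{Abels}) and is handled by the choice of integral model described above. Once this compatibility is in hand, both the f.g. and f.p. conclusions follow simultaneously from Lemma \ref{Stallings}, since that lemma holds verbatim for finite generation as well.
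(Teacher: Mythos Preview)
Your overall strategy---identify $\mbf{B}$ with a $k$-closed subgroup of $\mbf{H}$ via the section, produce a group-theoretic retract between suitable $S$-arithmetic subgroups, and then apply Lemma~\ref{Stallings} together with commensurability---is exactly the paper's. The difference is in execution, and your version contains a genuine error.

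The problematic step is the parenthetical ``enlarging $S$ by finitely many places if necessary, which preserves the finiteness properties in question up to commensurability''. Enlarging $S$ to $S'$ does \emph{not} yield commensurable arithmetic groups, and in positive characteristic the finiteness properties can change drastically: $\SL_2(\F_q[t])$ is not even finitely generated, whereas $\SL_2(\F_q[t,t^{-1}])$ is. Since the lemma is invoked precisely in the function-field part of Theorem~\ref{B}, this escape hatch is unavailable. To rescue your route you would need an independent argument that $r$ and $\iota$ already extend to $\OS$-morphisms for the \emph{given} $S$, which you do not provide.

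The paper sidesteps the issue entirely. Rather than spreading $r$ and $\iota$ out to $\OS$-scheme morphisms and then rearranging a semi-direct product at the arithmetic level, it fixes a single $k$-embedding $\mbf{H} \hookrightarrow \GL_n$ (so that $\mbf{B}$ sits inside the same $\GL_n$), takes the concrete $S$-arithmetic group $\Lambda = \mbf{B} \cap \GL_n(\OS)$, and sets $\Delta := r^{-1}(\Lambda)$. The restriction $r|_\Delta : \Delta \onto \Lambda$ is a group retract for free, since $\Lambda \subseteq \Delta$ and $r$ is the identity on $\mbf{B}$; the paper then identifies $\Delta$ with $\mbf{H} \cap \GL_n(\OS)$, so $\Delta$ is $S$-arithmetic with no change of $S$. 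A single commensurability step finishes the argument. Your integral-model machinery and the finite-index adjustments are unnecessary once one works with $\GL_n(\OS)$-points directly.
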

\begin{proof}
Since $r : \mbf{H} \onto \mbf{B}$ is a $k$-retract, we may (and do) identify $\mbf{B}$ with a $k$-closed subgroup of $\mbf{H}$. Without loss of generality, fix a $k$-embedding $\mbf{H} \into \GL_n$, for some $n$, so that both $\mbf{H}$ and its subgroup $\mbf{B}$ are seen as $k$-closed subgroups of the same $\GL_n$. Let $\Lambda$ be the $S$-arithmetic subgroup $\mbf{B} \cap \GL_n(\OS)$ of $\mbf{B}$, and set $\Delta := r^{-1}(\Lambda)$. Then $r$ induces a group retract $r|_{\Delta} : \Delta \onto \Lambda$. Now,
\begin{align*}
\Delta = & r^{-1}(\mbf{B} \cap \GL_n(\OS)) = \{ g \in \mbf{H} \tq r(g) \in \mbf{B} \mbox{ and } r(g) \mbox{ has entries in } \OS \} \\ = & \mbf{H} \cap \GL_n(\OS),
\end{align*}
so $\Delta$ is also $S$-arithmetic. Since $S$-arithmetic subgroups of a linear algebraic group are commensurable, the claim follows from Lemma \ref{Stallings} together with the retract $r|_\Delta$.
\end{proof}

We first prove the necessity part of \bref{B.2a}. From now on we assume $\carac(\K) > 0$. Under the hypothesis of \bref{B.2a}, we have that the soluble group $\Addi(\OS) \rtimes \mbf{T}(\OS)$ is finitely presented. We claim that $\B_2^0(\OS)$ is also finitely presented, so that $\OS$ is \QGff. 

To see this, consider $\Addi \rtimes \mbf{T}$ as a subgroup of a Borel subgroup in a (universal) Chevalley--Demazure group, with $\Addi$ being the unipotent root subgroup associated to the first simple root. Viewing this as the base root as in \cite[p. 625]{Bux04}, the proofs of the lemmata given in \cite[Section 5]{Bux04} carry over to our case and it follows from K. Brown's criterion \cite{Brown} and \cite[Lemma 5.7]{Bux04} that $\Addi(\OS) \rtimes \mbf{T}(\OS)$---whence $\Gamma$---is finitely presented only if $|S| \geq 3$.

Let us now point out an alternative proof to the fact that $\B_2^0(\OS)$ is finitely presented which does not depend on the results of \cite{Bux04}. For this we need the following observation. If the group of units $\Mult(R)$ is finitely generated, then the finite presentability of the groups
\[
B_1(R) := \begin{pmatrix} * & * \\ 0 & 1 \end{pmatrix} \leq \GL_2(R) \mbox{ and } \Bzero = \begin{pmatrix} * & * \\ 0 & * \end{pmatrix} \leq \SL_2(R)
\]
is equivalent. In effect, suppose $\Bzero$ is finitely presented. Then $\Addi(R)$ is a tame $\Z[\Mult(R)]$-module (see \cite{BieriStrebel}) with action given as in the beginning of Section \ref{teoremao}. Now look at the induced action from the group of squares $(\Mult(R))^2 := \set{g^2 \tq g \in \Mult(R)}$ given by
\[
 \xymatrix@R=2mm{
 {(\Mult(R))^2 \times \Addi(R)} \ar[r]
 & {\Addi(R)} \\
 {(u^2, r) } \ar@{|->}[r]
 & {u^2 r.}
 }
\]
Since every character $v: (R^\times)^n \to \R$ extends to a character $\bar{v} : R^\times \to \R$, it follows that $\Addi(R)$ is a tame $\Z[(\Mult(R))^2]$-module, so $\Addi(R) \rtimes (\Mult(R))^2$ is finitely presented. Since $\Addi(R) \rtimes (\Mult(R))^n$ has finite index in $\Addi(R) \rtimes \Mult(R) \cong B_1(R)$, it follows that $B_1(R)$ itself is finitely presented. The converse is clear.

Adapting the retraction arguments from \cite[Section 4]{Bux04}, we prove in \cite{euAbels} using fairly simple arguments that, in particular, the finiteness length of $\B_2^0(\OS)$ bounds that of $\Addi(\OS) \rtimes \mbf{T}(\OS)$ from above. We thus have the following.
\begin{pps} \label{eu}
Let $\mbf{B} = \Addi \rtimes \mbf{T}$ be a $\K$-split, connected, (non-nilpotent) soluble linear algebraic group, and let $\Lambda \leq \mbf{B}$ be $S$-arithmetic. If $\Lambda$ is finitely generated (resp. finitely presented), then so is $\B_2^0(\OS)$.
\end{pps}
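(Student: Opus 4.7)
The plan is to bridge $\Lambda$ and $\mathcal{B}_2^0(\OS)$ via the canonical character governing the $\mbf{T}$-action on $\Addi$, using one central quotient followed by a finite-index overgroup. By commensurability of $S$-arithmetic subgroups---which preserves both finite generation and finite presentability---one may assume $\Lambda = \Addi(\OS) \rtimes \mbf{T}(\OS)$. Since $\mbf{B}$ is non-nilpotent, the $\mbf{T}$-action on $\Addi$ is given by a non-zero character $\chi : \mbf{T} \to \Mult$; as $\mbf{T}$ is $\K$-split, one has $\mbf{T} \cong \Mult^n$ and $\chi$ corresponds to a non-zero weight $(a_1, \ldots, a_n) \in \Z^n$. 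Writing $d := \gcd(a_1, \ldots, a_n) \geq 1$, a short B\'ezout argument identifies $\chi(\mbf{T}(\OS))$ with $(\OS^\times)^d$.

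Next, consider the surjective homomorphism
\[
 \pi : \Addi(\OS) \rtimes \mbf{T}(\OS) \onto \Addi(\OS) \rtimes (\OS^\times)^d
\]
acting as the identity on $\Addi(\OS)$ and as $\chi$ on $\mbf{T}(\OS)$, where the target carries the standard multiplicative action inherited from $B_1(\OS) := \Addi(\OS) \rtimes \OS^\times$. Its kernel $K$ is a subgroup of the finitely generated abelian group $\mbf{T}(\OS)$, hence itself finitely generated; because $K$ acts trivially on $\Addi(\OS)$ and $\mbf{T}(\OS)$ is abelian, $K$ sits in the center of $\Lambda$. Thus if $\Lambda$ is finitely presented, then so is the quotient $\Lambda/K \cong \Addi(\OS) \rtimes (\OS^\times)^d$, by killing the finitely many generators of $K$.

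By Dirichlet's unit theorem, $\OS^\times$ is finitely generated, so $(\OS^\times)^d$ has finite index in $\OS^\times$, whence $\Addi(\OS) \rtimes (\OS^\times)^d$ has finite index in $B_1(\OS)$. Finite presentability therefore ascends to $B_1(\OS)$, and the equivalence recalled in the paragraph preceding the proposition (valid since $\OS^\times$ is finitely generated) yields the finite presentability of $\mathcal{B}_2^0(\OS)$. The finitely generated case is entirely analogous, since quotients and finite-index overgroups also preserve finite generation. The main (and admittedly mild) point is bookkeeping---confirming that no finiteness property is lost along the central quotient and the finite-index inclusion---whereas the conceptual content is the reduction of the possibly higher-dimensional torus action to a single character $\chi$, which is what allows one to descend from $\Lambda$ to the rank-one Borel $\mathcal{B}_2^0(\OS)$.
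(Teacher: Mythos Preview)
Your argument is correct and is in fact more self-contained than what the paper offers here. The paper does not prove Proposition~\ref{eu} in the text; it defers to an external reference (\cite{euAbels}) where the claim is obtained by adapting the retraction arguments of \cite[Section~4]{Bux04}, and the statement there is phrased as an inequality of finiteness lengths: the finiteness length of $\mathcal{B}_2^0(\OS)$ bounds that of $\Addi(\OS)\rtimes\mathbf{T}(\OS)$ from above.

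Your route is genuinely different. Instead of constructing a retraction, you pass to a central quotient by $\ker(\chi|_{\mathbf{T}(\OS)})$---which is finitely generated because $\mathbf{T}(\OS)$ is finitely generated abelian---and then ascend a finite-index step from $\Addi(\OS)\rtimes(\OS^\times)^d$ to $B_1(\OS)$, after which the $B_1\leftrightarrow\mathcal{B}_2^0$ equivalence already recorded in the paper finishes the job. The B\'ezout computation identifying $\chi(\mathbf{T}(\OS))$ with $(\OS^\times)^d$ is clean, and the key structural observation---that the kernel is central because it lies in $\ker\chi$ and $\mathbf{T}(\OS)$ is abelian---is exactly what makes the quotient step go through with no loss. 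What the paper's (external) approach buys is the full finiteness-length inequality, hence information about all $F_n$ at once; what your approach buys is a short, elementary proof of the $F_1$/$F_2$ statement actually needed here, with no appeal to Brown's criterion or to geometric machinery.
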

By the remark above for $B_1(\OS)$, by Proposition \ref{eu} and by \cite{Bux0}, it follows that $\Gamma$ is finitely presented only if $|S| \geq 3$.

Conversely, if $|S| \geq 3$ then the $S$-arithmetic Borel subgroups are finitely presented, either by the remark above for $B_1(\OS)$ and by the arguments from Section \ref{provaBorel} for Theorem \ref{A} in case $I = \vazio$, or by \cite{Bux04}. Furthermore, the reductive part of an extended Levi factor will always be finitely presented as well, by Behr's rank theorem \cite{Behr98}. Thus, $\Gamma$ is finitely presented by Theorem \ref{A}.

\subsection{Part \texorpdfstring{(\ref{B.2b})}{C.iib}} Assume $\carac(\K) > 0$ and that $\K$ is not very bad for the underlying root system of $\G$. Then the subring $\OS$ is also {\NVB} since it contains the (finite) prime field by \cite[23.1 and 23.2]{O'Meara}. Recall that $\P = \U \rtimes \L \leq \G$ is a standard parabolic subgroup of the simply connected, semi-simple group $\G$, where $\U$ is the unipotent radical and $\L$ the Levi factor. We then have $\L = (\prod_i \G_{\Phi_i}) \rtimes \H$, where $\H$ is a torus and each $\G_{\Phi_i}$ is a Chevalley--Demazure group scheme (cf. Section \ref{parabolicos}). Furthermore, $\U(\OS)$ admits a presentation as given in Lemma \ref{presK}. 

Suppose $\rk(\G_{\Phi_i}) \geq 2$ for all $i$. In this case, we know from \cite[Cor. 4.6]{Matsumoto} and \cite[Thm. 14.1]{BassMilnorSerre} that each $\G_{\Phi_i}(\OS)$ equals its elementary subgroup $E_{\Phi_i}(\OS)$, so $\P(\OS)$ has the form given in Section \ref{parabolicos} and thus Theorem \ref{A} applies directly, and we are done.

Assume then that $\rk(\G_{\Phi_i}) = 1$, possibly the exceptional case from Theorem \ref{A} where $\P(\OS) = \P_{\set{\alpha}}(\OS) \leq \G_{G_2}(\OS)$, $\alpha$ long. Here, $\P(\OS)$ is finitely presented only if so is $\G_{\Phi_i}(\OS)$, which in turn is finitely presented if and only if so is $\G_{\Phi_i}'(\OS)$ by the diagram from the beginning of Section \ref{aplicacoes}. But the latter group is isomorphic to $\SL_2(\OS)$, which is finitely presented only if $|S| \geq 3$ by Behr's theorem \cite{Behr98}. In this case, the Borel subgroup $\B_2^0(\OS)$ is also finitely presented as seen above, so the arguments from Section \ref{provaParab1} apply and thus Theorem \ref{A} holds in this case, too. This concludes the proof of Theorem \ref{B}.

\section{Concluding remarks} \label{remarks}

\subsection{} The reader might have noticed that we ``forgot'' the case of the general linear group along the text. Considering the general elementary group $\mathrm{GE}_n \leq \GL_n$ (see e.g. \cite{Silvester}), we may define the extended Levi factor for $\mathrm{GE}_n$ just as in Section \ref{exemplao} or as in Definition \ref{defLE}, replacing the torus $\H$ by the subgroup $D_n \leq \mathrm{GE}_n$ of diagonal matrices. Since $D_n$ contains the standard torus of $\SL_n$, the same methods from Section \ref{teoremao} apply and we obtain analogous results---in fact, $\mathrm{GE}_n$ fits the characterization given in Corollary \ref{classifsimplylaced}. The extra generators and relations that will occur pose no extra problems---they are analogous to those of Steinberg and are well-known \cite{Silvester}.

\subsection{} As seen in Sections \ref{provaBorel} and \ref{provaParab1}, the assumption that $\Bzero$ be finitely presented allows for a result with no restrictions on the root system nor on the characteristic of the base ring. In particular, the exceptional case $\P_{\set{\alpha}}(R) \leq E_{G_2}^{sc}(R)$, $\alpha$ long, only shows up if $\Bzero$ is not finitely presented. Theorem \ref{A} could be restated as follows.

\setcounter{thml}{0}
\begin{thml}[rewritten]
Let $\ueCD(R)$ be a universal elementary Chevalley--Demazure group for which its (standard) parabolic subgroups are finitely generated.
\begin{itemize}
\item If $\Bzero$ is finitely presented, then a standard parabolic subgroup $\P_I(R) \leq \ueCD(R)$ is finitely presented if and only if so are its extended Levi factors;
\item Otherwise, and if $R$ is \NVBff, then $\P_I(R)$ is finitely presented if and only if so are its extended Levi factors, except possibly in the case where $I = \set{\alpha}$ with $\alpha$ a long root in the root system of type $G_2$.
\end{itemize}
\end{thml}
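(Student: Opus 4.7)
The plan is to read the rewritten statement as an unpacking of Theorem \ref{A} via the definition of the QG condition, and to observe that the two bullets correspond precisely to the two branches of the sufficiency argument in Theorem \ref{avolta}; the sharper absence of a $G_2$ exception in the first bullet is exactly what Section \ref{provaParab1} yields, since the exception only surfaces along the NVB branch treated in Section \ref{casoNVB}.

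First I would dispatch the necessity direction uniformly. If $\P_I(R)$ is finitely presented, then by Proposition \ref{LEretrato} every extended Levi factor is a retract of $\P_I(R)$, and Lemma \ref{Stallings} forces the retract to be finitely presented. This argument is insensitive to $\Phi$ and to any special hypothesis on $R$, so it covers both bullets simultaneously.

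For the sufficiency direction I would split according to whether $I$ is empty, and then within the $I \neq \vazio$ case according to which clause of the QG disjunction holds. When $I = \vazio$, the argument of Section \ref{provaBorel} applies verbatim: Lemma \ref{XHF2} converts the hypothesis on each $\LE_n(R)$ into a finite presentation of every $\mf{X}_\gamma \rtimes \H$ with $\gamma \in \Phi^+$, and the candidate group $\til{\B}_\Phi(R)$ built from those presentations together with the relations \bref{tilBorelWT}--\bref{tilBorelU} is identified with $\B_\Phi(R)$ by a two-sided von Dyck computation; no appeal to the NVB condition is made, so this piece feeds into both bullets. When $I \neq \vazio$ and $\Bzero$ is finitely presented, Section \ref{provaParab1} delivers finite presentations of every $\gera{\mf{X}_\gamma, \mc{H}_\gamma}$ with $\gamma \in \Phi^+ \backslash \Phi_{\Ext(I)}$, so the same scheme constructs $\til{\P}_I(R)$ and identifies it with $\P_I(R)$ via Lemma \ref{DERTrick}; crucially, the commutator reduction there imposes no restriction on the type of $\Phi$, so the $G_2$ long root parabolic is not excluded along this branch. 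When $I \neq \vazio$ and $R$ is \NVBff, I would follow Section \ref{casoNVB}: starting from a finite presentation of $\LE_I(R)$, add the root subgroup generators of $\mc{K}_I(R)$ together with the Chevalley and Steinberg relations \bref{tilParabWT}--\bref{tilParabU} and the structure-constant relations \bref{StructureConstantsRel}--\bref{StructureConstantsRel2}, recover an analogue of Lemma \ref{derTrick} inside the candidate group, and push additive relations down from the top-height root subgroup using the invertibility of structure constants guaranteed by NVB.

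The main obstacle is not a new computation but rather careful bookkeeping: one must verify that Section \ref{provaParab1} nowhere invokes the NVB condition, and that the closing additive-relation computation in Section \ref{casoNVB} is the unique location where the $G_2$ long root exception arises. Once that is checked, the first bullet follows from the $I = \vazio$ branch of Section \ref{provaBorel} together with the full Section \ref{provaParab1}, and the second bullet follows from Section \ref{provaBorel} together with Section \ref{casoNVB}, with the $G_2$ exception surviving only in the latter.
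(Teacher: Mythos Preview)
Your proposal is correct and follows exactly the paper's own reasoning: the rewritten statement is nothing more than Theorem~\ref{A} unpacked along the two clauses of the \QG\ disjunction, with necessity handled uniformly via Proposition~\ref{LEretrato} and Lemma~\ref{Stallings}, and sufficiency split among Sections~\ref{provaBorel}, \ref{provaParab1}, and \ref{casoNVB}. One small correction to your bookkeeping: the $G_2$ long-root exception in Section~\ref{casoNVB} arises in the \emph{commutativity} computation \bref{asterisco}--\bref{possibilidades} (cf.\ Remark~\ref{precisao}), not in the closing additive-relation step, which in fact goes through even for that parabolic once the commutator relations are assumed; this does not affect your argument, since the salient point---that Section~\ref{provaParab1} is free of both the \NVB\ hypothesis and the $G_2$ exclusion---is exactly what you identify.
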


Yet another formulation of Theorem \ref{A} can be given by reinterpreting its proof. Recall that a group $G$ is finitely presented with respect to a subgroup $H \leq G$ if there exist finite subsets $\mc{X} \subseteq G$ and $\mc{R} \subseteq H \ast F_{\mc{X}}$ such that $G \cong H \ast F_{\mc{X}} / \langle \langle \mc{R} \rangle \rangle$. The constructions from Section \ref{teoremao} imply, under the assumptions of Theorem \ref{A} above, that $\mc{P}_I(R)$ is always finitely presented with respect to any of its extended Levi factors. Hence, a parabolic $\mc{P}_I(R)$ is finitely presented if and only if so are its extended Levi factors.

\subsection{} \label{precisao} We observe that the only instance of the proof of Theorem \ref{A} where we had to exclude the parabolic $\P_{\set{\alpha}}(R) \leq E_{G_2}^{sc}(R)$, $\alpha$ long, was when dealing with commutativity relations for unipotent root elements in Section \ref{casoNVB}. In the exceptional case, applying the same procedure as in \ref{casoNVB} to the (short) root $\alpha + \beta$---here, $\Phi_{\set{\alpha, \beta}} = G_2$---leads to equations, for instance, of the form
\begin{align*}
h \til{x}_{\alpha+\beta}(t) h^{-1} g \til{x}_{\alpha + \beta}(s) g^{-1} = & g \til{x}_{\alpha + \beta}(s) g^{-1} h \til{x}_{\alpha+\beta}(t) h^{-1} h_1 \til{x}_{2\alpha+3\beta}(a)^{-9} h_1^{-1} \times \\
\times & h_2 \til{x}_{2\alpha+3\beta}(b)^{3} h_2^{-1} h_3 \til{x}_{\alpha+3\beta}(c)^6 h_3^{-1},
\end{align*}
where $h_i, h, g \in \H$. It is likely that the $2\alpha + 3\beta$ terms cancel, but it is not clear how all extra terms should vanish. We do not discard the possibility that the same methods used here still apply for this exceptional case, though we could not find an alternative route to verify this. As a test case, one could consider the following.
\begin{pbl}
Prove that the parabolic subgroup $\P_{\set{\alpha}}(\F_5[t,t^{-1}]) \leq E_{G_2}(\F_5[t,t^{-1}])$, with $\alpha$ a long root in type $G_2$, is finitely presented if and only if its Levi factor is finitely presented (even though none of them admits a finite presentation). 
\end{pbl}

\subsection{} Theorem \ref{A} might be strengthened by proving that the finite presentability of the Borel subgroup $\Bzero$ implies that of (any) universal elementary Chevalley--Demazure group $\ueCD(R)$. This holds for Dedekind rings of arithmetic type by Borel--Serre's and Behr's theorems, and was often used in the proof of Theorem \ref{B}. Whether this is true in general is likely well-known to specialists, though we were unable to find a reference.

\begin{qst}
Is there a commutative ring with unity for which the Borel subgroup $\Bzero$ of $\SL_2(R)$ is finitely presented, but the elementary subgroup $E_2(R) \leq \SL_2(R)$ itself is not? Equivalently, is the kernel of the natural map $St(A_1, R) \onto E_2(R)$ finitely generated as a normal subgroup whenever $\Bzero$ is finitely presented? Here, $St(\Phi, R)$ denotes the (unstable) Steinberg group of type $\Phi$ over $R$.
\end{qst}

\subsection{} Apart from restrictions on the characteristic, the missing piece for a complete characterization in the form of Theorem \ref{B} is the case where $\K$ is a global function field and $|S| = 1$. It can be shown that a maximal parabolic $\P(\F_q[t]) \leq \SL_n(\F_q[t])$ is finitely presented if and only if so is its Levi factor, and this is likely extendable to the other simply-laced root systems. Whether this holds for types $B, C, F$ and $G$ is unknown to us.

\printbibliography

 \end{document}